\newtheorem{theorem}{theorem}[section]
\newtheorem{Theorem}[theorem]{Theorem}
\newtheorem{Lemma}[theorem]{Lemma}
\newtheorem{notation}[theorem]{Notation}
\newtheorem{proposition}[theorem]{Proposition}
\newtheorem{Corollary}[theorem]{Corollary}
\theoremstyle{definition}
\newtheorem{definition}[theorem]{Definition}
\newtheorem{remark}[theorem]{Remark}
\newcommand{\sslash}{\mathbin{/\mkern-6mu/}}
\newcommand{\op}[1]{\operatorname{#1}}
\newcommand{\newterm}{\textsf}
\newcommand{\dbcoh}[1]{\operatorname{D}^{\operatorname{b}}(\operatorname{coh }#1)}
\newcommand{\dsing}[1]{\operatorname{D}_{\operatorname{sg}}(#1)}
\renewcommand{\div}{\operatorname{div}}
\newcommand{\I}{\mathcal I}
\newcommand{\J}{\mathcal J}
\newcommand{\cone}{\operatorname{Cone}}
\newcommand{\Hom}{\operatorname{Hom}}
\newcommand{\tot}{\operatorname{tot}}
\newcommand{\coker}{\op{coker}}
\def\Z{\op{\mathbb{Z}}}
\def\C{\op{\mathbb{C}}}
\def\R{\op{\mathbb{R}}} 
\def\Q{\op{\mathbb{Q}}}
\def\O{\op{\mathcal{O}}}
\def\P{\op{\mathbb{P}}}
\def\I{\op{\mathcal{I}}}
\def\J{\op{\mathcal{J}}}
\def\spec{\operatorname{Spec}}
\def\conv{\op{Conv}}
\title[A derived equivalence of Libgober-Teitelbaum and Batyrev-Borisov]{A derived equivalence of the Libgober-Teitelbaum and the Batyrev-Borisov mirror constructions}
\author[Malter]{Aimeric Malter}
\address{
  \begin{tabular}{l}
   Aimeric Malter \\
   \hspace{.1in} University of Birmingham, School of Mathematics \\
   \hspace{.1in} Edgbaston, Birmingham B15 2TT,  United Kingdom \\
   \hspace{.1in} Email: {\bf ahm933@student.bham.ac.uk} \\
  \end{tabular}
}
\begin{document}

\maketitle

\bibliographystyle{alpha}

\begin{abstract}
    In this paper we study a particular mirror construction to the complete intersection of two cubics in $\P^5$, due to Libgober and Teitelbaum. Using variations of geometric invariant theory and methods of Favero and Kelly, we prove a derived equivalence of this mirror to the Batyrev-Borisov mirror of the complete intersection. 
\end{abstract}

\section{Introduction}

\noindent Libgober and Teitelbaum \cite{LT94} proposed a mirror to a Calabi-Yau complete intersection $V_\lambda$ of two cubics in $\P^5$ defined as the zero locus for the two polynomials \[Q_{1,\lambda}=x_0^3+x_1^3+x_2^3-3\lambda x_3x_4x_5,\quad 
Q_{2,\lambda}=x_3^3+x_4^3+x_5^3-3\lambda x_0x_1x_2.\]
Their proposed mirror $W_{LT,\lambda}$ is a (minimal) resolution of singularities of the variety $V_{LT,\lambda}$ with defining equations $Q_{1,\lambda}, Q_{2,\lambda}$ but in the quotient space $\P^5/G_{81}$, where $G_{81}$ is a specified order 81 subgroup of $PGL(5,\C)$. They showed topological evidence that $V_{\lambda}$ and $W_{LT,\lambda}$ are a mirror pair, proving  on the level of Euler characteristics that $\chi(V_\lambda)=-\chi(W_{LT,\lambda})$. In \cite{FR18}, Filipazzi and Rota verify a state space isomorphism between the two Calabi-Yau varieties by providing an explicit mirror map.

Batyrev and Borisov in \cite{BB96} introduced a mirror construction for Calabi-Yau intersections in Fano toric varieties using polytopes, showing  mirror duality for $(1,q)$-Hodge numbers.
This mirror construction agrees with constructions by Green-Plesser \cite{GP90} and Berglund-H\"ubsch \cite{BH92} for Fermat hypersurfaces. However, the Batyrev-Borisov mirror to two cubics in $\P^5$ differs from the one given above by Libgober and Teitelbaum.

In this paper, we establish a connection between the mirrors of Libgober-Teitelbaum and Batyrev-Borisov for two cubics in $\P^5$ in the context of Homological Mirror Symmetry, using variations of geometric invariant theory (VGIT). 
In particular, we show that the bounded derived category of coherent sheaves of the Libgober-Teitelbaum mirror is derived equivalent to that of a complete intersection $Z\subseteq X_\nabla$ in the Batyrev-Borisov mirror family.
Note that there exists a toric stack $\mathcal{X}_\nabla$ with coarse moduli space $X_\nabla$ (see \ref{coxconstruction} for the toric stack construction and \ref{sec:BB} for the fan associated to this toric stack).
On the level of stacks, we will prove the following result.

\begin{Theorem}
\label{Thm:MyThmInIntro}
Let $\lambda\in\C$ such that $\lambda^6\ne 0,1$. 
Consider the two polynomials \[
p_{1,\lambda}=x_0^3x_6^3+x_1^3x_7^3+x_2^3x_8^3-3\lambda x_3x_4x_5x_6x_7x_8,\]
\[
p_{2,\lambda}=x_3^3x_9^3+x_4^3x_{10}^3+x_5^3x_{11}^3-3 \lambda x_0x_1x_2x_9x_{10}x_{11}.
\] Let $\mathcal{Z}_\lambda=Z(p_{1,\lambda},p_{2,\lambda})\subseteq \mathcal{X}_\nabla$ and $\mathcal{V}_{LT,\lambda}=Z(Q_{1,\lambda},Q_{2,\lambda})\subseteq[\P^5/G_{81}]$. Then
 \[ \dbcoh{\mathcal{V}_{LT,\lambda}}\simeq \dbcoh{\mathcal{Z}_\lambda}.\]
\end{Theorem}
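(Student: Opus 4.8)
The plan is to realize both sides of the equivalence as derived categories of suitable GIT quotients and then connect them through a common ambient space via variation of GIT, using the machinery of Favero--Kelly for gauged Landau--Ginzburg models and complete intersections.

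\begin{proof}[Proof sketch]
The strategy is to pass through Landau--Ginzburg models and relate the two complete intersections by a variation of geometric invariant theory.

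\textbf{Step 1: Rewrite both sides as gauged LG models.}
First I would use the standard Kn\"orrer/Isik--Shipman-type correspondence: for a complete intersection $\mathcal{Z} = Z(p_1,p_2) \subseteq \mathcal{X}$ cut out by sections of line bundles $\mathcal{L}_1, \mathcal{L}_2$, the category $\dbcoh{\mathcal{Z}}$ is equivalent to the absolute derived category of a gauged LG model on the total space of $\mathcal{L}_1^\vee \oplus \mathcal{L}_2^\vee$ over $\mathcal{X}$, with superpotential $W = p_1 y_1 + p_2 y_2$ (where $y_1, y_2$ are the fiber coordinates) and an extra $\gm$ acting on the fibers with weight one. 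Applying this on both sides, $\dbcoh{\mathcal{V}_{LT,\lambda}}$ becomes the derived category of a gauged LG model over $[\P^5/G_{81}]$ (equivalently over the quotient stack $[\A^6 \setminus 0 / (\gm \times G_{81})]$), and $\dbcoh{\mathcal{Z}_\lambda}$ becomes the derived category of a gauged LG model over $\mathcal{X}_\nabla$, which via the Cox construction \ref{coxconstruction} is itself a quotient stack $[\A^{12} \setminus B / T]$ for an appropriate torus $T$ and irrelevant locus $B$.

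\textbf{Step 2: Identify a common ambient quotient problem.}
The key observation is that the Batyrev--Borisov side involves twelve variables $x_0,\dots,x_{11}$, while the Libgober--Teitelbaum side involves six. The six extra variables $x_6,\dots,x_{11}$ and their monomial combinations appearing in $p_{1,\lambda}, p_{2,\lambda}$ should be interpreted as the coordinates that "unfold" the $G_{81}$-quotient: the group $G_{81} \subseteq PGL(5,\C)$ of order $81$ acts on $\P^5$ by roots of unity, and its action can be encoded torically by adding extra Cox coordinates, exactly as a finite abelian quotient of a toric variety is again toric with an enlarged fan. Concretely, I would show that the fan for $\mathcal{X}_\nabla$ (from \ref{sec:BB}) together with the chosen GIT stability is related, by a change of the linearization (a wall-crossing), to the toric stack $[\P^5/G_{81}]$ presented with the same enlarged set of homogeneous coordinates. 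Thus both LG models live on the same ambient stack $\mathcal{A} = [\A^{12}\setminus Z_\pm / \widehat{T}]$ with the same potential $W = p_{1,\lambda} y_1 + p_{2,\lambda} y_2$, but with two different choices of GIT chamber for the $\widehat T$-action.

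\textbf{Step 3: Apply the VGIT / window-shift equivalence.}
Having arranged the two sides as derived categories of the same LG pair $(\mathcal{A}, W)$ with two different GIT linearizations, I would invoke the theorem of Favero--Kelly (building on Ballard--Favero--Katzarkov and Halpern-Leistner) that for a crossing of a GIT wall for which the relevant semi-invariant locus has the correct weights relative to $W$ — precisely, when the "$\mu$-invariant" of the wall is zero, which here amounts to the Calabi--Yau condition on the complete intersection — the two derived categories of the LG models on the two sides are equivalent (via a grade-restriction-window functor). The numerical condition $\lambda^6 \ne 0, 1$ enters exactly to guarantee that $\mathcal{Z}_\lambda$ and $\mathcal{V}_{LT,\lambda}$ are quasi-smooth (the critical locus of $W$ is contained in the semistable locus on both sides), so that the LG models compute the complete intersections and the wall-crossing functor is an equivalence rather than merely a semiorthogonal embedding. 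Composing the Step 1 equivalences with this wall-crossing equivalence yields $\dbcoh{\mathcal{V}_{LT,\lambda}} \simeq \dbcoh{\mathcal{Z}_\lambda}$.

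\textbf{Main obstacle.}
The principal difficulty is Step 2: making precise the toric/GIT dictionary that exhibits $[\P^5/G_{81}]$ and the Batyrev--Borisov stack $\mathcal{X}_\nabla$ (with its auxiliary bundle) as two chambers of a single GIT problem on $\A^{12}$, and checking that the two superpotentials literally coincide under this identification — in particular that the monomials $x_0^3x_6^3$, $\ldots$, $3\lambda x_3x_4x_5x_6x_7x_8$ are exactly the pullbacks of $Q_{1,\lambda}, Q_{2,\lambda}$ along the quotient map and become regular functions of the right degree on the nose. This requires a careful bookkeeping of the lattice $N$, the dual polytope $\nabla$, the nef-partition, and the character lattice of $G_{81}$, together with a verification that the wall being crossed is of the length-zero ("balanced") type so that Favero--Kelly applies with an equivalence. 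Once the combinatorics of Step 2 is pinned down, Steps 1 and 3 are applications of results available in the literature, and the hypothesis $\lambda^6 \neq 0,1$ is used only to ensure quasi-smoothness on both sides.
\end{proof}
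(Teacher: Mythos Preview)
Your overall strategy matches the paper's: realize both sides via Isik--Shipman as singularity categories of LG models, place them as two chambers of a single GKZ fan on $\A^{14}$ (twelve $x_i$ plus two bundle coordinates $u_1,u_2$), and connect them by Herbst--Walcher/Favero--Kelly under the quasi-Calabi--Yau condition (Corollary~\ref{Cor:FK19Cor4.8}).

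Your Step~2, however, misframes the key difficulty and misses the paper's central device. The extra variables $x_6,\dots,x_{11}$ do \emph{not} ``unfold'' $G_{81}$: the stack $[\P^5/G_{81}]$ already admits a six-variable Cox presentation (Proposition~\ref{Prop:FanforP5G81}), whose rays $\rho_0,\dots,\rho_5$ are simply six of the twelve rays of $\Sigma_\nabla$. The real issue is that no chamber of the 14-variable GKZ fan gives the LT total space on the nose. The paper instead finds a chamber $\sigma_p$ whose quotient is a \emph{partial compactification} of $\tot(\O_{\mathcal X_{LT}}(-D_b)\oplus\O_{\mathcal X_{LT}}(-D_a))$, obtained by allowing $x_6,\dots,x_{11}$ to vanish along added boundary strata (Proposition~\ref{Prop:LTpartiallycpt}); this is explicitly flagged in the introduction as the paper's novelty. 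Because of the compactification one has $\J_p\subsetneq\I_p$, and the hypothesis $\I_p\subseteq\sqrt{\partial w,\J_p}$ of Proposition~\ref{Prop:FK19Prop4.7} is genuinely nontrivial: the triangulation in Proposition~\ref{Prop:LTpartiallycpt} is engineered so that every extra generator of $\I_p$ is divisible by one of six specific monomials, and Lemma~\ref{Lemma:LTIdealcontainment} is an explicit Jacobian computation showing each lies in $\sqrt{\partial w}$---this is precisely where $\lambda^6\ne0,1$ enters (see \eqref{eqn:IdealProof1}). So your reading of that hypothesis is morally right, but the actual work is about controlling the added boundary, not just quasi-smoothness of the two complete intersections themselves. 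On the Batyrev--Borisov side, by contrast, every maximal simplex of the chosen triangulation contains $S_1,S_2$, so $\I_q=\J_q$ and the containment is automatic (Lemma~\ref{Lem:TriangExistsForBB}).
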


This result is expected in the context of Kontsevich's Homological Mirror Symmetry Conjecture. As both $\mathcal{V}_{LT,\lambda}$ and $\mathcal{Z}_\lambda$ are conjectured to be (homological) mirrors of the complete intersection of two cubics, we expect their corresponding derived categories to be equivalent to each other and to the Fukaya category of the zero locus $Z(Q_{1,\lambda}, Q_{2,\lambda}) \subseteq \P^5$. 

There has been work to unify various (toric) mirror constructions \cite{ACG16, BB96, Bat93, BH92, Cla17} in the literature via derived equivalence \cite{FK17, DFK18}. This paper adds a new construction to this that has been elusive in the past. In particular, this is the first application of partial compactifications in VGIT quotients to prove the equivalence of derived categories for complete intersections, and not hypersurfaces, for Calabi-Yau varieties. 

We start by giving some background on the mathematical tools necessary to prove Theorem~\ref{Thm:MyThmInIntro} in Section \ref{sec:Back}. This includes a short introduction to the relevant tools in toric geometry, the Batyrev-Borisov mirror construction, and VGIT quotients as outlined in \cite{FK19}. In Section \ref{sec:linking}, we then study the link between the Batyrev-Borisov mirror construction and the mirror given by Libgober-Teitelbaum, proving Theorem $\ref{Thm:MyThmInIntro}$. 

\subsection{Acknowledgements}
I would like to thank my advisor Tyler Kelly and Daniel Kaplan for some stimulating discussions on the results of the paper as well as for their reviews of early versions of this paper. I would further like to thank the other PhD students at the University of Birmingham for their input on finding a simpler proof to Lemma \ref{Lem:inequalitybounds}. I would also like to thank the referee for their suggestions on how to improve the paper.

This project was supported by the Engineering and Physical Sciences Research Council (EPSRC) under Grant EP/L016516/1.

\section{Background}\label{sec:Back}

\noindent In this section we give the necessary background on the Batyrev-Borisov mirror construction, the Libgober-Teitelbaum construction, and the tools used to connect those two. All the varieties considered in this paper will be defined over the complex numbers.
More detailed expositions can for example be found in \cite{BN07}, \cite{CLS}, \cite{FK17} and 
\cite{LT94}.
\subsection{The Cox construction for toric stacks}\label{coxconstruction}

Let $M$ be a lattice of rank $d$ and $N$ its dual lattice, with the pairing \[
\langle\ ,\ \rangle: M\times N\rightarrow \Z.
\]
\noindent
We extend this to a pairing between $M_{\R} := M \otimes_{\Z}\R$ and $N_{\R} := N \otimes_{\Z}\R$ in the natural way.

To associate a variety $X_\Sigma$ to a fan $\Sigma$, we can use the \newterm{Cox construction} (see $\S 5$ of \cite{CLS}).
Start by noting that each ray $\rho$ of the fan $\Sigma$ corresponds to a divisor $D_\rho$ on $X_\Sigma$ (see $\S 4$ of \cite{CLS}). Then we have the following exact sequence:
\begin{equation}
\label{eq:Divisor}
0\rightarrow M\stackrel{\iota}{\rightarrow} \bigoplus_{\rho\in\Sigma(1)}\Z D_\rho\rightarrow \coker\iota \rightarrow 0,
\end{equation}
where $\iota(m):= \operatorname{div}(\chi^m)=\sum_{\rho\in\Sigma(1)}\langle m,u_\rho\rangle D_\rho$.

We will write $\Z^{\Sigma(1)}:=\bigoplus_{\rho\in\Sigma(1)}\Z D_\rho$.
Since $\C^\ast$ is a divisible group and hence an injective $\Z$-module, the functor $\Hom_{\Z}(-,\C^\ast)$ is exact, so applying it to \eqref{eq:Divisor} yields the exact sequence:
\begin{equation}
\label{eq:CoxConpre}
1\rightarrow \Hom_{\Z}(\coker\iota,\C^\ast)\rightarrow\Hom_{\Z}(\Z^{\Sigma(1)},\C^\ast)\rightarrow\Hom_{\Z}(M,\C^\ast)\rightarrow 1.
\end{equation}
Define 
\begin{equation}\label{def:G}
G_{\Sigma}:=\Hom_{\Z}(\coker\iota,\C^\ast).
\end{equation}
Note that $\Hom_{\Z}(\Z^{\Sigma(1)},\C^\ast)\simeq (\C^\ast)^{\Sigma(1)}$ and $\Hom_{\Z}(M,\C^\ast)\simeq T_N$, where $T_N$ is the torus of the variety.
Hence we may rewrite $(\ref{eq:CoxConpre})$ as  \begin{equation}
\label{eq:CoxCon}
1\rightarrow G_{\Sigma}\rightarrow (\C^\ast)^{\Sigma(1)}\rightarrow T_N\rightarrow 1.
\end{equation}

When describing $G_{\Sigma}$ explicitly, the following lemma is useful.
\begin{Lemma}[Lemma 5.1.1(c) in \cite{CLS}]
\label{Lem:5.1.1CLS}
Let $G_\Sigma \subseteq (\C^\ast)^{\Sigma(1)}$ be as in $(\ref{eq:CoxCon})$. Given a basis $e_1,\dots,e_n$ of $M$, we have
\begin{equation*}
 G_{\Sigma}=\left\{(t_\rho)\in(\C^\ast)^{\Sigma(1)}\ \Bigg\vert \ \prod_\rho t_\rho^{\langle e_i,u_\rho\rangle}=1\text{ for }1\le i\le n\right\}.
\end{equation*}
\end{Lemma}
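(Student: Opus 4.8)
The plan is to unwind the definition of $G_\Sigma$ as the kernel of the surjection $(\C^\ast)^{\Sigma(1)}\to T_N$ in \eqref{eq:CoxCon} and to translate the resulting condition into the stated system of monomial equations. The only content here is careful bookkeeping of the identifications involved; there is no genuine obstacle.

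First I would pin down the map $(\C^\ast)^{\Sigma(1)}\to T_N$ explicitly. By construction it is obtained by applying the contravariant exact functor $\Hom_\Z(-,\C^\ast)$ to $\iota\colon M\hookrightarrow\Z^{\Sigma(1)}$, so under the isomorphisms $\Hom_\Z(\Z^{\Sigma(1)},\C^\ast)\simeq(\C^\ast)^{\Sigma(1)}$ and $\Hom_\Z(M,\C^\ast)\simeq T_N$ it is the pullback $\phi\mapsto\phi\circ\iota$. By exactness of \eqref{eq:CoxCon},
\[
G_\Sigma=\{\phi\in\Hom_\Z(\Z^{\Sigma(1)},\C^\ast)\mid \phi\circ\iota=1\},
\]
where $1$ denotes the trivial homomorphism $M\to\C^\ast$.

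Next I would make the isomorphism $\Hom_\Z(\Z^{\Sigma(1)},\C^\ast)\simeq(\C^\ast)^{\Sigma(1)}$ concrete: since $\Z^{\Sigma(1)}=\bigoplus_\rho\Z D_\rho$ is free on the $D_\rho$, a homomorphism $\phi$ is determined by, and may be prescribed arbitrarily by, the tuple $(t_\rho)_{\rho\in\Sigma(1)}$ with $t_\rho:=\phi(D_\rho)\in\C^\ast$. Because $e_1,\dots,e_n$ generate $M$ as a group and $\phi\circ\iota$ is a homomorphism, the condition $\phi\circ\iota=1$ is equivalent to $\phi(\iota(e_i))=1$ for all $1\le i\le n$. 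Finally, using $\iota(e_i)=\operatorname{div}(\chi^{e_i})=\sum_{\rho\in\Sigma(1)}\langle e_i,u_\rho\rangle D_\rho$ together with the fact that $\phi$ is a homomorphism into the multiplicative group $\C^\ast$,
\[
\phi(\iota(e_i))=\prod_{\rho\in\Sigma(1)}\phi(D_\rho)^{\langle e_i,u_\rho\rangle}=\prod_{\rho\in\Sigma(1)}t_\rho^{\langle e_i,u_\rho\rangle},
\]
so $\phi\circ\iota=1$ holds precisely when $\prod_\rho t_\rho^{\langle e_i,u_\rho\rangle}=1$ for $1\le i\le n$. This is exactly the asserted description of $G_\Sigma$.

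As indicated, the argument is routine; the points worth emphasizing are that writing $\C^\ast$ multiplicatively converts the direct sum $\Z^{\Sigma(1)}$ into the product $(\C^\ast)^{\Sigma(1)}$ and the integer pairings $\langle e_i,u_\rho\rangle$ appearing in the formula for $\iota$ into exponents, and that it suffices to test the kernel condition on a basis of $M$ rather than on all of $M$ since $M$ is free of rank $n$.
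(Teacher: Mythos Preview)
Your proof is correct and is the standard argument: unwinding $G_\Sigma$ as the kernel of $\Hom_\Z(\Z^{\Sigma(1)},\C^\ast)\to\Hom_\Z(M,\C^\ast)$, identifying homomorphisms with tuples $(t_\rho)$, and testing the kernel condition on a basis of $M$. The paper does not supply its own proof of this lemma---it simply quotes the result from \cite{CLS}---so there is nothing to compare against; your argument is precisely the routine verification one would expect (and essentially what \cite{CLS} does).
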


\noindent We now have both an affine space $\C^{\Sigma(1)}$ and a group $G_\Sigma$, which can be shown to be reductive, thus only further require an exceptional set $Z$  in order to construct the toric variety $X_\Sigma$ as a geometric quotient. For each ray $\rho\in\Sigma(1)$, introduce a variable $x_\rho$ and consider the \newterm{total coordinate ring} of $X_\Sigma$, 
\[
S:=\C[x_\rho\ \vert\ \rho\in\Sigma(1)].
\]
For each cone $\sigma\in\Sigma$, let $x^{\hat{\sigma}}=\prod_{\rho\not\in\sigma(1)} x_\rho$. We define the \newterm{irrelevant ideal}
\[
B(\Sigma)=\langle x^{\hat{\sigma}}\ \vert \ \sigma\in\Sigma\rangle\subseteq S.
\]
Since $\tau\preceq \sigma$, we have that $x^{\hat{\tau}}$ is a multiple of $x^{\hat{\sigma}}$. Thus, we only need to consider maximal cones to generate the irrelevant ideal. 
Define $Z(\Sigma)=Z(B(\Sigma))\subseteq\C^{\Sigma(1)}$. 
We then have:
\begin{Theorem}[Theorem 5.1.11 in \cite{CLS}]
\label{Thm:CoxCon}
Let $X_\Sigma$ be a toric variety without torus factors, associated to a fan $\Sigma$. Then \[X_\Sigma\simeq (\C^{\Sigma(1)}\setminus Z(\Sigma))\sslash G_\Sigma.\]
\end{Theorem}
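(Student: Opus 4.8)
The plan is to build an explicit $G_\Sigma$-equivariant morphism $\pi\colon\C^{\Sigma(1)}\setminus Z(\Sigma)\to X_\Sigma$ (with trivial $G_\Sigma$-action on the target) and to show it realizes $X_\Sigma$ as a good categorical quotient, which is geometric when $\Sigma$ is simplicial. I would do this cone by cone. Fix $\sigma\in\Sigma$ and recall the affine chart $U_\sigma=\spec\C[\sigma^\vee\cap M]\subseteq X_\Sigma$, whose coordinate algebra is spanned by the characters $\chi^m$ for $m\in\sigma^\vee\cap M$. On the Cox side, the locus where $x^{\hat\sigma}=\prod_{\rho\notin\sigma(1)}x_\rho$ is invertible is the affine open $(\C^{\Sigma(1)})_{x^{\hat\sigma}}=\spec S_{x^{\hat\sigma}}$. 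The ring $S$, hence $S_{x^{\hat\sigma}}$, is graded by $\coker\iota$ via $\deg x_\rho=[D_\rho]$, and since $G_\Sigma=\Hom_\Z(\coker\iota,\C^\ast)$ acts diagonally through the associated characters, the subring of $G_\Sigma$-invariants is exactly the degree-zero part $(S_{x^{\hat\sigma}})_0$. A Laurent monomial $\prod_\rho x_\rho^{a_\rho}$ (with $a_\rho\ge 0$ for $\rho\in\sigma(1)$, arbitrary integer for $\rho\notin\sigma(1)$) has degree $0$ precisely when $\sum_\rho a_\rho D_\rho=\iota(m)$ for some $m\in M$, i.e.\ $a_\rho=\langle m,u_\rho\rangle$ for all $\rho$; the positivity constraint on $\rho\in\sigma(1)$ then says exactly $m\in\sigma^\vee$. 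Hence $(S_{x^{\hat\sigma}})_0=\bigoplus_{m\in\sigma^\vee\cap M}\C\cdot\chi^m\cong\C[\sigma^\vee\cap M]$, giving an isomorphism $U_\sigma\cong(\C^{\Sigma(1)})_{x^{\hat\sigma}}\sslash G_\Sigma$.

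Second, I would glue these local quotients. Since $B(\Sigma)=\langle x^{\hat\sigma}\mid\sigma\in\Sigma\rangle$ (with maximal cones already sufficient) and $Z(\Sigma)=Z(B(\Sigma))$, the opens $(\C^{\Sigma(1)})_{x^{\hat\sigma}}$ cover $\C^{\Sigma(1)}\setminus Z(\Sigma)$; and for $\tau\preceq\sigma$ the divisibility $x^{\hat\sigma}\mid x^{\hat\tau}$ matches the inclusion $U_\tau\subseteq U_\sigma$, so the local isomorphisms from the previous step are compatible with the gluing data defining $X_\Sigma$. Patching them produces $\pi$, which is then a good categorical quotient by virtue of its affine-local description; to upgrade this to a geometric quotient in the simplicial case one checks that the $G_\Sigma$-orbits in $\C^{\Sigma(1)}\setminus Z(\Sigma)$ are closed, which is exactly what excising $Z(\Sigma)$ buys us and where the finer combinatorics of the fan enter.

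Finally, the hypothesis that $X_\Sigma$ has no torus factors is used precisely to ensure that $\iota\colon M\to\Z^{\Sigma(1)}$ is injective --- equivalently, that the ray generators $u_\rho$ span $N_\R$ --- so that \eqref{eq:Divisor} is genuinely short exact and $G_\Sigma$ from \eqref{def:G} is a reductive (diagonalizable) group of the expected dimension $|\Sigma(1)|-d$; without it the dualized sequence \eqref{eq:CoxCon} fails to present $T_N$ correctly and the chart-by-chart argument breaks down. The step I expect to be the main obstacle is making the affine computation airtight: one must verify that inverting exactly the variables $x_\rho$ with $\rho\notin\sigma(1)$ recovers the semigroup $\sigma^\vee\cap M$ and nothing more, that the invariant ring $(S_{x^{\hat\sigma}})^{G_\Sigma}$ really equals (not merely contains) the degree-zero piece, and that the resulting quotient scheme is reduced and separated --- i.e.\ honestly $\spec(S_{x^{\hat\sigma}})^{G_\Sigma}$ --- so that the patched $\pi$ is an isomorphism onto $X_\Sigma$ rather than onto some coarser or non-separated object.
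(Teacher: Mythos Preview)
The paper does not give its own proof of this statement; it is quoted as Theorem~5.1.11 of \cite{CLS} and used purely as background. Your outline is correct and follows exactly the standard argument given there: identify $(S_{x^{\hat\sigma}})^{G_\Sigma}=(S_{x^{\hat\sigma}})_0\cong\C[\sigma^\vee\cap M]$ via the degree-zero Laurent monomial computation, then glue along faces using the divisibility $x^{\hat\sigma}\mid x^{\hat\tau}$ for $\tau\preceq\sigma$.
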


Most of the discussion to follow happens on the level of \newterm{stacks}, so we define the toric stacks relevant for us here.
\begin{definition}
Let $\Sigma$ be a fan. Define the \newterm{Cox fan} $\operatorname{Cox}(\Sigma)\subseteq\R^{\Sigma(1)}$ to be
\[
\operatorname{Cox}(\Sigma):=\{\cone(e_\rho\ \vert\ \rho\in\sigma)\big\vert\sigma\in\Sigma\}.
\]
\end{definition}
\noindent Denote by $n$ the number of rays in the fan $\Sigma$. Then the Cox fan of $\Sigma$ is a subfan of the standard fan corresponding to the toric variety $\mathbb{A}^n$. Thus, $U_\Sigma:=X_{\operatorname{Cox}(\Sigma)}$ is an open subset of $\mathbb{A}^n$. Consider the group $G_\Sigma$ as defined in Equation~\eqref{def:G}. 
 \begin{definition}
 We call $U_\Sigma$ the \newterm{Cox open set} associated to $\Sigma$ and define the \newterm{Cox stack} associated to $\Sigma$ to be \[
 \mathcal{X}_\Sigma :=\left[ U_\Sigma/G_\Sigma\right]
 \]
 \end{definition}
\noindent In the smooth and orbifold cases, we have the following result relating $\mathcal{X}_\Sigma$ to $X_\Sigma$.
\begin{Theorem}[\cite{FMN10}]
\label{Thm:FK18Stacks}
If $\Sigma$ is simplicial, then $\mathcal{X}_\Sigma$ is a smooth Deligne-Mumford stack with coarse moduli space $X_\Sigma$. When $\Sigma$ is smooth (or equivalently $X_\Sigma$ is smooth) $\mathcal{X}_\Sigma\cong X_\Sigma$.
\end{Theorem}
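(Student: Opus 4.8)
The plan is to argue directly from the presentation $\mathcal{X}_\Sigma=[U_\Sigma/G_\Sigma]$, reducing all three assertions to a computation of the stabilizer groups of the $G_\Sigma$-action. Recall that $G_\Sigma=\Hom_{\Z}(\coker\iota,\C^\ast)$ is a diagonalizable, in particular affine and (in characteristic zero) smooth, algebraic group, and that $U_\Sigma\subseteq\mathbb{A}^n$ is a smooth quasi-affine scheme. Hence $[U_\Sigma/G_\Sigma]$ is automatically an algebraic stack with smooth surjective atlas $U_\Sigma\to\mathcal{X}_\Sigma$, and smoothness of the atlas together with smoothness of $U_\Sigma$ already forces $\mathcal{X}_\Sigma$ to be a smooth algebraic stack. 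What remains of the first sentence is the Deligne--Mumford property, which for a quotient stack by an affine group over $\C$ holds precisely when every geometric stabilizer is finite (reducedness of the finite stabilizer being automatic in characteristic zero), and the coarse moduli space statement, which will follow once we know $U_\Sigma\to X_\Sigma$ is a geometric quotient.

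Next I would compute $\operatorname{Stab}_{G_\Sigma}(x)$ for a point $x\in U_\Sigma$. Since $G_\Sigma\subseteq(\C^\ast)^{\Sigma(1)}$ acts by scaling coordinates, an element $(t_\rho)$ fixes $x$ if and only if $t_\rho=1$ whenever $x_\rho\ne 0$; writing $I=I(x):=\{\rho\in\Sigma(1)\mid x_\rho=0\}$, membership of $x$ in $U_\Sigma$ means exactly that $I\subseteq\sigma(1)$ for some cone $\sigma\in\Sigma$. Feeding this into the description of $G_\Sigma$ from Lemma~\ref{Lem:5.1.1CLS} identifies the stabilizer with
\[
\operatorname{Stab}_{G_\Sigma}(x)\;\cong\;\left\{(t_\rho)_{\rho\in I}\in(\C^\ast)^I\ \Big\vert\ \prod_{\rho\in I}t_\rho^{\langle e_i,u_\rho\rangle}=1,\ 1\le i\le n\right\}\;=\;\Hom_{\Z}\!\big(\coker(M\xrightarrow{\,u_I\,}\Z^{I}),\,\C^\ast\big),
\]
where $u_I(m)=(\langle m,u_\rho\rangle)_{\rho\in I}$.

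From this I would read off all three claims. The group $\Hom_{\Z}(A,\C^\ast)$ is finite if and only if the finitely generated abelian group $A$ has rank zero, and $\operatorname{rank}\coker(u_I)=|I|-\dim_{\Q}\operatorname{span}(u_\rho:\rho\in I)$. When $\Sigma$ is simplicial the rays of any cone are $\Q$-linearly independent, so the rays indexed by $I\subseteq\sigma(1)$ are independent, $\coker(u_I)$ is finite, and every stabilizer is finite: $\mathcal{X}_\Sigma$ is Deligne--Mumford. For the coarse moduli space, the Cox construction (Theorem~\ref{Thm:CoxCon}; in the simplicial case the full statement of \cite{CLS} gives that $U_\Sigma\to X_\Sigma$ is a \emph{geometric} quotient) realizes $X_\Sigma$ as $U_\Sigma/G_\Sigma$, and a quotient stack by a group acting with finite stabilizers whose geometric quotient exists as a scheme has that scheme as its coarse moduli space, so the coarse space of $\mathcal{X}_\Sigma$ is $X_\Sigma$. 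Finally, if $\Sigma$ is smooth then for each cone $\{u_\rho:\rho\in\sigma(1)\}$ extends to a $\Z$-basis of $N$, hence $u_I$ is surjective, $\coker(u_I)=0$, every stabilizer is trivial, $G_\Sigma$ acts freely, and $[U_\Sigma/G_\Sigma]$ coincides with the scheme $U_\Sigma/G_\Sigma\cong X_\Sigma$.

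The step I expect to demand the most care is not the linear algebra but the stack-theoretic bookkeeping: verifying that $[U_\Sigma/G_\Sigma]$ is an algebraic stack of the expected type, that its Deligne--Mumford-ness is correctly detected by finiteness of geometric stabilizers, and, most delicately, that its coarse moduli space is computed by the GIT/geometric quotient. This last point is exactly where simpliciality of $\Sigma$ is essential: for a non-simplicial fan the Cox quotient is only a good categorical quotient, the stabilizers acquire positive-dimensional parts, and $\mathcal{X}_\Sigma$ ceases to be Deligne--Mumford.
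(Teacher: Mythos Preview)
The paper does not prove this theorem at all: it is quoted from \cite{FMN10} and used as a black box, so there is no ``paper's own proof'' to compare against.

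Your argument is nonetheless essentially correct and is the standard direct verification from the quotient presentation. The stabilizer computation is accurate: for $x\in U_\Sigma$ with vanishing locus $I=I(x)\subseteq\sigma(1)$, the stabilizer is $\Hom_{\Z}\bigl(\coker(M\xrightarrow{u_I}\Z^I),\C^\ast\bigr)$, which is finite precisely when the $u_\rho$ for $\rho\in I$ are $\Q$-linearly independent (the simplicial condition) and trivial when they extend to a $\Z$-basis of $N$ (the smooth condition). The one place that warrants an explicit citation rather than a parenthetical is the coarse moduli space step: you need that for a simplicial fan the Cox quotient $U_\Sigma\to X_\Sigma$ is a \emph{geometric} quotient (this is the sharper form of Theorem~5.1.11 in \cite{CLS}, not merely the good categorical version quoted in the paper), together with the general fact that a Deligne--Mumford quotient stack $[U/G]$ with $U\to U/G$ a geometric quotient has $U/G$ as its coarse moduli space. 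Both ingredients are standard, and your final paragraph correctly flags this as the point requiring care.
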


\subsection{Polytopes and the Batyrev-Borisov construction}
We now define \newterm{reflexive polytopes} and \newterm{nef partitions}. We can use them to introduce the Batyrev-Borisov duality, following \cite{BN07, BB96}.

\begin{definition}
A \newterm{polytope} $\Delta$ in $M_{\R}$ is a convex hull of a finite set of points in $M_{\R}$. If this finite set can be chosen to only consist of lattice points of $M$, we call $\Delta$ a \newterm{lattice polytope}.
\end{definition}
\begin{definition}
Let $\Delta$ be a full dimensional lattice polytope in $M_{\R}$ with $0$ an interior lattice point. Its \newterm{dual polytope} $\Delta^\vee$ is given by \[
\Delta^\vee:=\{n\in N_{\R}\ \vert\  \langle m,n\rangle\geq -1\ \forall m\in\Delta\}
\]
We call $\Delta$ \newterm{reflexive} if the dual polytope is also a lattice polytope.
\end{definition}
\noindent Given a lattice polytope $\Delta$, we can associate a toric variety to it by considering its normal fan $\Sigma_\Delta$ with its corresponding toric variety $X_{\Sigma_\Delta}$.

The polytope $\Delta$ corresponds to the anticanonical divisor of $X_{\Sigma_\Delta}$ in that the lattice points of $\Delta$ correspond to the global sections of the anticanonical divisor. This in turn allows one to construct a Calabi-Yau hypersurface in $X_{\Sigma_\Delta}$ by considering the zero-section of the global section; however, we want to construct Calabi-Yau complete intersections. To do so, we must construct a nef partition of the polytope $\Delta$.

\begin{definition}
Let $\Delta\subseteq M_{\R}$ be a reflexive lattice polytope. A \newterm{nef partition of length $r$} of $\Delta$ is a Minkowski sum decomposition $\Delta=\Delta_1+\dots+\Delta_r$ where $\Delta_1,\dots,\Delta_r$ are lattice polytopes with $0\in \Delta_i$. 
\end{definition}

Consider a reflexive polytope $\Delta\subseteq M_{\R}$ with nef partition $\Delta=\Delta_1+\dots+\Delta_r$.
Then, for $1\le j\le r$, we define \[
\nabla_j:=\{n\in N_{\R}\ \vert\ \langle m,n\rangle\geq -\delta_{ij}\text{ for all } m\in\Delta_i,\text{ for } 1\le i\le r\}.
\]
We note that these polytopes are all lattice polytopes, and define the polytope $\nabla$ as their Minkowski sum $\nabla:=\nabla_1+\dots+\nabla_r$. We call $\nabla_1,\dots,\nabla_r$ the \newterm{dual nef partition} to $\Delta_1,\dots,\Delta_r$.

To understand the statement of Batyrev-Borisov duality, we note that a lattice polytope $\Delta$ corresponds to a $d$-dimensional Gorenstein Fano toric variety $X_\Delta$.
Each of the polytopes $\Delta_i$ corresponds to a divisor $D_i$ on $X_\Delta$. The nef partition $\Delta=\Delta_1+\dots+\Delta_r$ decomposes the anticanonical sheaf $\O(-K_{X_\Delta})$ as tensor product $\bigotimes_{i=1}^r\O_{X_\Delta}(D_i)$. Now the lattice points inside the $\Delta_i$ correspond to global sections of these line bundles. Taking the zero-sets of such sections, we can associate to each polytope a family of hypersurfaces. By intersecting these, a nef partition corresponds to a family of $(d-r)$-dimensional Calabi-Yau complete intersections in $X_\Delta$. Similarly, the dual nef partition $\nabla=\nabla_1+\dots+\nabla_r$ gives a family of $(d-r)$-dimensional Calabi-Yau complete intersections in $X_\nabla$.

\begin{remark}\label{Rem:BB}
The generic complete intersection in the family associated to the dual nef partition $\nabla_1,\dots,\nabla_r$ may be singular.

 In \cite{Bat93}, Batyrev formulates the original construction in a way that fixes this problem. In this case, one uses a \newterm{maximal projective crepant partial desingularization} (\newterm{MPCP-desingularization}), which reduces to a combinatorial manipulation of the normal fan to $\nabla$. 

For every maximal cone of the normal fan, we choose a regular triangulation of it. Therefore, all maximal cones should contain exactly the minimal number of rays dictated by the dimension, since a triangulation uses simplices. Doing this for all maximal cones gives exactly a maximal projective triangulation. When speaking of $X_\nabla$ we will thus think of a MPCP-desingularization of the variety associated to the normal fan of $\nabla$, obtained in this way.
\end{remark}

 Batyrev and Borisov prove the following result, showing that their construction produces topological mirror duality for $(1,q)$-Hodge numbers.
\begin{Theorem}[Theorem 9.6 in \cite{BB96}] 
\label{Thm:9.6inBB}
Let $V$ be a Calabi-Yau complete intersection of $r$ hypersurfaces in $\P^d$ and $d-r\ge3$ and  $\widehat{W}$ be a MPCP-desingularization of the Calabi-Yau complete intersection $W\subseteq X_\nabla$. Then\[
h^q(\Omega^1_{\widehat{W}})=h^{d-r-q}(\Omega^1_V)\text{ for } 0\le q\le d-r.
\]
 \end{Theorem}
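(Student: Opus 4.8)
The plan is to prove this (essentially Batyrev and Borisov's argument) via Hodge--Deligne ($E$-)polynomials, reducing the identity to a combinatorial duality between the reflexive polytopes $\Delta$ and $\nabla$ and their nef partitions $\Delta = \Delta_1 + \dots + \Delta_r$ and $\nabla = \nabla_1 + \dots + \nabla_r$, where $X_\Delta \cong \P^d$. Since $\P^d$ is smooth and the $f_i$ are generic, $V$ is a smooth projective Calabi--Yau $(d-r)$-fold, and Serre duality together with Hodge symmetry give $h^{d-r-q}(\Omega^1_V) = h^{1,d-r-q}(V) = h^{d-r-1,q}(V)$, so the claim is the $p=1$ case of the expected mirror relation $h^{p,q}(\widehat{W}) = h^{d-r-p,q}(V)$. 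By the Lefschetz hyperplane theorem for complete intersections, $H^k(V) \cong H^k(\P^d)$ for $k \ne d-r$, so on the $V$-side only $h^{d-r-1,d-r-1}(V) = 1$ and $h^{d-r-1,1}(V) = \dim H^1(V,T_V)$ can be nonzero; the latter is the dimension of the space of polynomial deformations of the $f_i$ and, via the Griffiths--Dwork description of $H^{d-r}(V)$, equals a sum of dimensions of graded pieces of the Jacobian ring of $f_1,\dots,f_r$, i.e.\ an explicit lattice-point count attached to the nef partition of $\Delta$.

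On the $\widehat{W}$-side I would work with the string-theoretic Hodge numbers $h^{p,q}_{\mathrm{st}}$. An MPCP desingularization comes from a maximal projective crepant subdivision of the normal fan of $\nabla$, so $\widehat{W}\to W$ is crepant and $h^{p,q}_{\mathrm{st}}(\widehat{W}) = h^{p,q}_{\mathrm{st}}(W)$; moreover $\widehat{W}$ has at worst Gorenstein abelian quotient singularities in high codimension (and is smooth once $d-r\le 3$, the case relevant to this paper), which suffices to identify $h^{1,q}_{\mathrm{st}}(\widehat{W})$ with the ordinary $h^{1,q}(\widehat{W})$ for all $q$ -- this is the point where the index $1$ in ``$(1,q)$'' is essential. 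To compute $E_{\mathrm{st}}(W;u,v)$ combinatorially I would use the Cayley trick, realizing the complete intersection associated with $\nabla_1,\dots,\nabla_r$ as (a resolution of) an ample hypersurface in a Gorenstein toric variety built from the Cayley polytope of the $\nabla_j$; then Batyrev's and Danilov--Khovanskii's formulas for the Hodge numbers of ample hypersurfaces in Gorenstein toric varieties apply and express $h^{1,q}(\widehat{W})$ through lattice-point counts on faces of $\nabla$ and of the $\nabla_j$.

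Finally one matches the two counts. For $q=1$ this is $h^{1,1}(\widehat{W}) = h^{d-r-1,1}(V)$, which reduces to a duality between lattice points on faces of the $\Delta_i$ and interior lattice points of the corresponding dual faces of $\nabla$, immediate from the reflexivity of $\Delta$ and the definition of the $\nabla_j$. For $q = d-r-1$ it is $h^{1,d-r-1}(\widehat{W}) = 1$, and for $2\le q\le d-r-2$ it is the vanishing $h^{1,q}(\widehat{W}) = 0$; both follow once one checks that in the combinatorial formula only the extreme terms survive, which uses that the relevant faces of the $\nabla_j$ carry no interior lattice points besides the origin -- exactly where the hypotheses on the nef partition enter. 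The main obstacle, and the least formal part of the above, is the second step: making precise that the ordinary numbers $h^{1,q}(\widehat{W})$ (not merely the string-theoretic ones) are computed by the toric combinatorics, pinning down the Danilov--Khovanskii-type formula in the complete-intersection/Cayley setting, and then carrying out the term-by-term comparison against the Jacobian-ring count on $V$.
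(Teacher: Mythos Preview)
The paper does not prove this statement: it is quoted verbatim as Theorem~9.6 of \cite{BB96} and used only as background motivation for the Batyrev--Borisov construction, with no proof or sketch given. So there is nothing in the paper to compare your proposal against.

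That said, your outline is a reasonable summary of the original Batyrev--Borisov strategy: pass to string-theoretic $E$-polynomials, use the Cayley construction to reduce the complete-intersection case to the hypersurface case where the Danilov--Khovanskii/Batyrev formulas are available, and then match the resulting lattice-point counts using the duality between the nef partitions of $\Delta$ and $\nabla$. You are also right to flag as the delicate point the identification of the ordinary $h^{1,q}(\widehat{W})$ with the string-theoretic ones and the precise form of the combinatorial formula in the Cayley setting; in \cite{BB96} this is exactly where most of the work lies, and your sketch stops short of actually carrying it out. One small correction: your parenthetical ``and is smooth once $d-r\le 3$'' is not what you want here---the hypothesis is $d-r\ge 3$, and an MPCP desingularization need not be smooth in general; the point is rather that its singularities occur in high enough codimension that $h^{1,q}$ is unaffected.
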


\subsection{Toric vector bundles and GIT quotients}

We first discuss how to construct toric vector bundles. Recall that a Cartier divisor $D=\sum_\rho a_\rho D_\rho$ on a toric variety $X_\Sigma$ corresponds to the line bundle $\mathcal{L}=\O_{X_\Sigma}(D)$, which is the sheaf of sections of a rank 1 vector bundle $\pi:V_\mathcal{L}\rightarrow X_\Sigma$. The variety $V_\mathcal{L}$ is toric and $\pi$ is a toric morphism. This is shown by directly constructing the fan of $V_\mathcal{L}$ in terms of $\Sigma$ and $D$, which we do now.
 Given a cone $\sigma\in\Sigma$, set
\[
\tilde{\sigma}=\cone{((0,1),(u_\rho,-a_\rho)\ \vert\ \rho\in\sigma(1))}.
\]

\noindent Then $\tilde{\sigma}$ is a strongly convex rational polyhedral cone in $N_{\R}\times\R$ for all cones $\sigma\in\Sigma$. Now let $\Sigma\times D$ be the collection consisting of cones $\tilde{\sigma}$ for $\sigma\in\Sigma$ and their faces. This is a fan in $N_{\R}\times \R$ and the projection $\overline{\pi}:N\times\Z\rightarrow N$ is compatible with $\Sigma\times D$ and $\Sigma$, thus inducing a toric morphism \[
\pi:X_{\Sigma\times D}\rightarrow X_\Sigma
\]

\begin{proposition}[Proposition 7.3.1 in \cite{CLS}]\label{prop:7.3.1CLS}
$\pi:X_{\Sigma\times D}\rightarrow X_\Sigma$ is a rank 1 vector bundle whose sheaf of sections is $\O_{X_{\Sigma}}(D)$.
\end{proposition}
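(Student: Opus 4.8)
The statement is local on $X_\Sigma$, so I would work over the affine chart $U_\sigma=\spec\C[\sigma^\vee\cap M]$ of a maximal cone $\sigma\in\Sigma$. Since $D$ is Cartier there is Cartier data $\{m_\sigma\}$, that is, characters $m_\sigma\in M$ with $\langle m_\sigma,u_\rho\rangle=-a_\rho$ for $\rho\in\sigma(1)$ and $D|_{U_\sigma}=\div(\chi^{-m_\sigma})$; I recall from \cite{CLS}~\S4 that $\O_{X_\Sigma}(D)|_{U_\sigma}$ is then the free $\O_{U_\sigma}$-module on $\chi^{m_\sigma}$, with transition functions $\chi^{m_\sigma-m_{\sigma'}}$ over $U_\sigma\cap U_{\sigma'}$. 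The first step is to identify $\pi^{-1}(U_\sigma)$ with $X_{\tilde{\sigma}}$: one checks that $\tilde{\sigma'}$ is a face of $\tilde{\sigma}$ whenever $\sigma'\preceq\sigma$ (it is cut out by $(m,0)^{\perp}$ for any $m\in\sigma^\vee$ cutting out $\sigma'$), and that every cone of $\Sigma\times D$ mapped into $\sigma$ by $\overline\pi$ is such a face; hence $\pi^{-1}(U_\sigma)=X_{\tilde{\sigma}}$ and $\pi$ is induced by $\C[\sigma^\vee\cap M]\to\C[\tilde{\sigma}^\vee\cap(M\times\Z)]$, $\chi^m\mapsto\chi^{(m,0)}$.

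\textbf{Local triviality.} Next I would compute the dual cone. From the generators $(0,1)$ and $(u_\rho,-a_\rho)$ of $\tilde{\sigma}$ and the identity $a_\rho=-\langle m_\sigma,u_\rho\rangle$, one finds $(m,b)\in\tilde{\sigma}^\vee$ if and only if $b\ge 0$ and $m+bm_\sigma\in\sigma^\vee$. Grading the semigroup algebra by the last coordinate $b$ then yields
\[
\C[\tilde{\sigma}^\vee\cap(M\times\Z)]=\bigoplus_{b\ge 0}\chi^{(-bm_\sigma,\,b)}\cdot\C[\sigma^\vee\cap M],
\]
which is the symmetric algebra over $\C[\sigma^\vee\cap M]$ on the free rank-one module spanned by $\chi^{(-m_\sigma,1)}$. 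Thus $\pi^{-1}(U_\sigma)\cong U_\sigma\times\A^1$ over $U_\sigma$, so $\pi$ is a rank-one bundle once we know the charts glue linearly on fibres, which follows from the last step. (Concretely, the unipotent lattice automorphism $(u,\ell)\mapsto(u,\ell-\langle m_\sigma,u\rangle)$ of $N\times\Z$ carries $\tilde{\sigma}$ onto $\sigma\times\R_{\ge 0}$ and commutes with $\overline\pi$, so it exhibits the trivialization directly.)

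\textbf{Gluing and the sheaf of sections.} Finally I would compare transitions: over $U_\sigma\cap U_{\sigma'}$ the fibre coordinates $\chi^{(-m_\sigma,1)}$ and $\chi^{(-m_{\sigma'},1)}$ differ by the unit $\pi^*\chi^{\,m_{\sigma'}-m_\sigma}$, so the transition functions are invertible regular functions ($\pi$ is a line bundle) and satisfy the cocycle condition because $\{m_\sigma-m_{\sigma'}\}$ does. Hence $\pi_*\O_{X_{\Sigma\times D}}=\operatorname{Sym}_{\O_{X_\Sigma}}\mathcal{E}$, where $\mathcal{E}$ is the line bundle with local frame $\chi^{-m_\sigma}$ and transitions $\chi^{\,m_{\sigma'}-m_\sigma}$; comparing with the transitions $\chi^{m_\sigma-m_{\sigma'}}$ of $\O_{X_\Sigma}(D)$ above shows $\mathcal{E}\cong\O_{X_\Sigma}(D)^\vee$. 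Therefore $X_{\Sigma\times D}=\operatorname{Spec}_{X_\Sigma}\operatorname{Sym}\bigl(\O_{X_\Sigma}(D)^\vee\bigr)$ is the total space of $\O_{X_\Sigma}(D)$, whose sheaf of sections is $\O_{X_\Sigma}(D)$, proving the proposition. I expect the only genuinely delicate point to be the first step: verifying that $\Sigma\times D$ really is a fan whose affine charts are the $X_{\tilde{\sigma}}$ and that $\overline\pi$-preimages of cones of $\Sigma$ decompose exactly into faces of the $\tilde{\sigma}$, so that the local pictures patch to the global one; getting the sign and duality conventions right in the last step is the other place to take care.
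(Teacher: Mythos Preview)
The paper does not give its own proof of this proposition: it is quoted verbatim as Proposition~7.3.1 of \cite{CLS} and used as background. Your argument is correct and is essentially the standard proof found there---work locally on $U_\sigma$, use the Cartier data $m_\sigma$ to compute $\tilde\sigma^\vee$ and hence identify $\C[\tilde\sigma^\vee\cap(M\times\Z)]$ as a polynomial ring in one variable over $\C[\sigma^\vee\cap M]$, and then compare transition functions with those of $\O_{X_\Sigma}(D)$. The only cosmetic point is that when you speak of $\mathcal{E}$ having ``local frame $\chi^{-m_\sigma}$'' you really mean the frame for the degree-$1$ summand of $\pi_*\O_{X_{\Sigma\times D}}$ is the fibre coordinate $\chi^{(-m_\sigma,1)}$; the identification with $\O_{X_\Sigma}(D)^\vee$ is via the transition cocycle, not via interpreting $\chi^{-m_\sigma}$ literally as a rational function on the base. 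With that understood, your proof is complete.
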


\noindent The variety $X_{\Sigma\times D}$ is sometimes also denoted by $X_{\Sigma,D}$.

For decomposable vector bundles of rank higher than 1, we can repeatedly apply Proposition \ref{prop:7.3.1CLS} to construct the total space of the vector bundle, following \cite{FK18}. 
 Taking $r$ torus-invariant Weil divisors $D_i=\sum_{\rho\in\Sigma} a_{i\rho}D_\rho$, we define 
\[
\sigma_{D_1,\dots,D_r}:=\cone\left(\{u_\rho-a_{1\rho}e_1-\dots-a_{r\rho}e_r\ |\  \rho\in\sigma(1)\}\cup\{e_i | i\in\{1,\dots,r\}\}\right)\subset N_{\R}\oplus \R^r.
\]
Let $\Sigma_{D_1,\dots,D_r}$ be the fan generated by the cones $\sigma_{D_1,\dots,D_r}$ and their proper faces, and call $\mathcal{X}_{\Sigma,D_1,\dots,D_r}$  the associated stack. We obtain the following result.

\begin{proposition}[Proposition 4.13 in \cite{FK18}]\label{Prop:repeatProp7.3.1} Let $D_1,\dots,D_r$ be divisors on $X_\Sigma$. There is an isomorphism of stacks \[
\mathcal{X}_{\Sigma,D_1,\dots,D_r}\cong \tot\left(\O_{\mathcal{X}_\Sigma}(D_i)\right).
\]
\end{proposition}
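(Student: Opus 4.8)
The plan is to prove this by induction on $r$, using Proposition~\ref{prop:7.3.1CLS} as the base case and the key observation that the combinatorial operation defining $\sigma_{D_1,\dots,D_r}$ is associative in the appropriate sense. For $r=1$, the cone $\sigma_{D_1} = \cone(\{u_\rho - a_{1\rho}e_1 \mid \rho \in \sigma(1)\} \cup \{e_1\})$ is exactly the cone $\tilde\sigma$ appearing in the construction of $X_{\Sigma,D}$ preceding Proposition~\ref{prop:7.3.1CLS} (after the harmless relabeling $(0,1)\leftrightarrow e_1$ and $(u_\rho,-a_\rho)\leftrightarrow u_\rho - a_\rho e_1$), so $\mathcal{X}_{\Sigma,D_1}\cong \tot(\O_{\mathcal{X}_\Sigma}(D_1))$ follows directly, noting that passing to the Cox stack is functorial and compatible with the fan construction. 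First I would set up notation carefully: write $\pi_r : N_\R \oplus \R^r \to N_\R \oplus \R^{r-1}$ for the projection killing the last coordinate, and check that $\pi_r$ is a map of fans $\Sigma_{D_1,\dots,D_r} \to \Sigma_{D_1,\dots,D_{r-1}}$; this is the statement that the image of each generator of $\sigma_{D_1,\dots,D_r}$ lies in a cone of $\Sigma_{D_1,\dots,D_{r-1}}$, which is immediate since $\pi_r$ sends $u_\rho - \sum_i a_{i\rho}e_i \mapsto u_\rho - \sum_{i<r} a_{i\rho}e_i$ and $e_i \mapsto e_i$ for $i<r$, $e_r \mapsto 0$.

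The heart of the induction is the identification
\[
\Sigma_{D_1,\dots,D_r} = \left(\Sigma_{D_1,\dots,D_{r-1}}\right)_{\,\overline{D_r}},
\]
where $\overline{D_r}$ denotes the pullback of $D_r$ along $\tot(\bigoplus_{i<r}\O_{\mathcal{X}_\Sigma}(D_i)) \to \mathcal{X}_\Sigma$, i.e. the divisor $\sum_\rho a_{r\rho}\overline{D_\rho}$ on $X_{\Sigma_{D_1,\dots,D_{r-1}}}$. Unwinding the definition in Proposition~\ref{prop:7.3.1CLS} applied to the fan $\Sigma_{D_1,\dots,D_{r-1}}$ with divisor $\overline{D_r}$: a maximal cone $\sigma_{D_1,\dots,D_{r-1}}$ has rays $u_\rho - \sum_{i<r}a_{i\rho}e_i$ (with coefficient $a_{r\rho}$ on $\overline{D_r}$) and $e_1,\dots,e_{r-1}$ (with coefficient $0$). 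The bundle construction then produces a new last coordinate $e_r$ and new ray generators $(u_\rho - \sum_{i<r}a_{i\rho}e_i) - a_{r\rho}e_r = u_\rho - \sum_{i\le r}a_{i\rho}e_i$, together with $e_1,\dots,e_{r-1}$ (unchanged) and $e_r$ itself — which is precisely the generating set of $\sigma_{D_1,\dots,D_r}$. So the two fans agree, and by Proposition~\ref{prop:7.3.1CLS} we get $\mathcal{X}_{\Sigma,D_1,\dots,D_r} \cong \tot(\O(\overline{D_r}))$ over $\mathcal{X}_{\Sigma,D_1,\dots,D_{r-1}}$. Combining with the inductive hypothesis $\mathcal{X}_{\Sigma,D_1,\dots,D_{r-1}} \cong \tot(\bigoplus_{i<r}\O_{\mathcal{X}_\Sigma}(D_i))$ and the fact that the total space of the pullback of $\O(D_r)$ to that total space is $\tot(\bigoplus_{i\le r}\O_{\mathcal{X}_\Sigma}(D_i))$, the result follows.

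The main obstacle I anticipate is bookkeeping at the level of \emph{stacks} rather than varieties: Proposition~\ref{prop:7.3.1CLS} as quoted is a statement about toric varieties, so I need to check that applying $\operatorname{Cox}(-)$ and the quotient $[U_\bullet/G_\bullet]$ commutes with the iterated bundle construction, i.e. that the stacky structure group $G_{\Sigma_{D_1,\dots,D_r}}$ is compatible with the iterated extension of $G_\Sigma$ by copies of $\C^\ast$ coming from the $e_i$-rays, and that the exceptional sets $Z(\Sigma_{D_1,\dots,D_r})$ match up under the projections. This amounts to verifying that the short exact sequence \eqref{eq:CoxCon} behaves well under adding the $e_i$ rays — which it does, since adding a ray $e_i$ with the modified $u_\rho$-generators precisely dualizes to tensoring the class group extension by the class of $D_i$ — but the verification that $U_{\Sigma_{D_1,\dots,D_r}}$ is the total space of the corresponding vector bundle over the Cox stack (and not merely a coarse-space statement) should be handled by citing the functoriality of the Cox stack construction under toric morphisms that are bundle projections, as in \cite{FK18}. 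Everything else is routine fan combinatorics.
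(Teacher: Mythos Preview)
Your proposal is correct and follows the same approach the paper indicates: the paper does not write out a proof of this proposition but cites it from \cite{FK18}, prefacing the statement with the remark that ``for decomposable vector bundles of rank higher than 1, we can repeatedly apply Proposition~\ref{prop:7.3.1CLS} to construct the total space of the vector bundle.'' Your inductive argument is precisely this repeated application, with the fan identification $\Sigma_{D_1,\dots,D_r} = (\Sigma_{D_1,\dots,D_{r-1}})_{\overline{D_r}}$ as the inductive step, so there is nothing substantively different to compare.
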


Geometric invariant theory (GIT), developed by Mumford, is a powerful tool in modern algebraic geometry. We will here discuss the toric version of it, following $\S 14$ of \cite{CLS}.

\noindent Roughly speaking, GIT deals with ways to take almost geometric quotients of spaces by some reductive groups acting on them. As a model for this, recall the Cox construction in \textsection\ref{coxconstruction}. It gives a toric variety as almost geometric quotient $X_\Sigma\simeq (\C^{\Sigma(1)}\setminus Z(\Sigma))\sslash G_{\Sigma}$. Fundamentally, we start with $\C^{\Sigma(1)}$ and remove a special Zariski closed subset in order to obtain an almost geometric quotient. GIT provides the machinery to do so, but the way is often not unique. The subsets that are  removed  depend on a choice  of stability parameterised by a choice of line bundle. The different choices can give different quotients which are birational.

In GIT, deciding which points are removed is done by a lifting of the $G$-action on $\C^r$ to the rank 1 trivial vector bundle $\C^r\times\C\rightarrow \C^r$. Define the character group of $G$ to be \[
\widehat{G}=\{\chi:G\rightarrow\C^\ast\vert \chi\text{ is a homomorphism of algebraic groups}\}.
\]
A character $\chi\in \widehat{G}$ then gives the action of $G$ on $\C^r\times\C$ defined by\[
g\cdot (p,t)=(g\cdot p,\chi(g)t),\ \ g\in G,\ (p,t)\in\C^r\times\C. 
\]
This lifts the $G$-action on $\C^r$ and furthermore all possible liftings arise this way.

Let $\mathcal{L}_\chi$ or $\O(\chi)$ denote the sheaf of sections of $\C^r\times\C$ with this $G$-action. It is called the \newterm{linearised line bundle} with character $\chi$. For $d\in \Z$, the tensor product $\O(\chi)^{\otimes d}$ is the linearised line bundle with character $\chi^d$. Note that, if one forgets the $G$-action, then $\O(\chi)\simeq \O_{\C^r}$ as line bundles on $\C^r$. Thus, a global section $s\in\Gamma(\C^r,\O(\chi))$ can be written as\begin{eqnarray*}
&& s:\C^r\rightarrow \C^r\times \C\\
&& p\mapsto (p,F_s(p)),\;
\end{eqnarray*}
for some unique $F_s\in\C[x_1,\dots,x_r]$.

\begin{definition}\label{Def:stablepoints}
Fix $G\subseteq (\C^\ast)^r$ and $\chi\in\widehat{G}$, with linearised line bundle $\O(\chi)$.
Given a global section $s$ of $\O(\chi)$, we denote \[
(\C^r)_s:=\{p\in\C^r\ \vert\ s(p)\neq0\}
\]
This is an affine open subset of $\C^r$, as $s(p)\neq 0$ means $F_s(p)\neq 0$. Furthermore, $G$ acts on $(\C^r)_s$ when $s$ is $G$-invariant. We define:
\begin{enumerate}
    \item $p\in\C^r$ is \newterm{semistable} with respect to $\chi$ if there exist $d>0$ and $s\in\Gamma(\C^r,\O(\chi^d))^G$ such that $p\in(\C^r)_s$.
    \item $p\in\C^r$ is \newterm{stable} with respect to $\chi$ if there exist $d>0$ and $s\in\Gamma(\C^r,\O(\chi^d))^G$ such that $p\in(\C^r)_s$, the isotropy subgroup $G_p$ is finite, and all $G$-orbits in $(\C^r)_s$ are closed in $(\C^r)_s$.
    \item The set of all semistable (resp. stable) points with respect to $\chi$ is denoted $(\C^r)^{ss}_\chi$ (resp. $(\C^r)^s_\chi$).
\end{enumerate}
\end{definition}

Given a group $G\subseteq(\C^\ast)^r$ and $\chi\in\widehat{G}$, we next need to define the GIT quotient $\C^r\sslash_\chi G$. Consider the graded ring $R_\chi=\bigoplus_{d=0}^\infty\Gamma(\C^r,\O(\chi^d))^G$.

\begin{definition}\label{Def:GITquot}
For $G\subseteq(\C^\ast)^r$ and $\chi\in\widehat{G}$, the \newterm{GIT quotient} $\C^r\sslash_\chi G$ is \[
\C^r\sslash_\chi G=\op{Proj}(R_\chi).
\]
\end{definition}

An important property of GIT quotients is that in principle, this is the same as taking the quotient of $(\C^r)^{ss}_\chi$ under the action of $G$.

\begin{proposition}[Proposition 14.1.12.c) in \cite{CLS}]\label{Prop:14.1.12inCLS}
For $G\subseteq(\C^\ast)^r$ and $\chi\in\widehat{G}$, the GIT quotient $\C^r\sslash_\chi G$ is a good categorical quotient of $(\C^r)^{ss}_\chi$ under the action of $G$, i.e. $\C^r\sslash_\chi G\simeq (\C^r)^{ss}_\chi\sslash G$.
\end{proposition}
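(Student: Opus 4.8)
\emph{Proof plan.} The plan is to run the standard projective-GIT argument, adapted to the affine-over-a-point setting: I would cover both $(\C^r)^{ss}_\chi$ and $\op{Proj}(R_\chi)$ by matching affine opens on which the claim reduces to the affine GIT theorem, and then glue. First I would record that $G$ is reductive and acts rationally on the finitely generated algebra $\C[x_1,\dots,x_r]$, so by Hilbert--Nagata each graded piece $\Gamma(\C^r,\O(\chi^d))^G$ is a finite module over $R_0=\C[x_1,\dots,x_r]^G$ and $R_\chi$ is a finitely generated $R_0$-algebra; hence $\op{Proj}(R_\chi)$ is a well-defined scheme, projective over the affine quotient $\spec(R_0)$. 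Next, by the very definition of semistability, $(\C^r)^{ss}_\chi=\bigcup_{d>0}\bigcup_{s\in\Gamma(\C^r,\O(\chi^d))^G}(\C^r)_s$, so the semistable locus is an open $G$-stable subset covered by the $G$-stable affine opens $(\C^r)_s$; by Noetherianity finitely many suffice.

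The key local step is to identify, for a semiinvariant section $s$ of weight $\chi^d$ with defining polynomial $F=F_s$ (so that $g\cdot F=\chi(g)^dF$), the basic open $D_+(F)\subseteq\op{Proj}(R_\chi)$ with the affine GIT quotient of $(\C^r)_s$. By construction $D_+(F)=\spec\big((R_\chi)_{(F)}\big)$, the degree-zero part of the $\Z$-graded localization $(R_\chi)_F$, and I would show $(R_\chi)_{(F)}\cong\big(\C[x_1,\dots,x_r][F^{-1}]\big)^G=\C[(\C^r)_s]^G$. An element of $(R_\chi)_{(F)}$ is a fraction $h/F^k$ with $h\in\Gamma(\C^r,\O(\chi^{dk}))^G$, i.e.\ a polynomial with $g\cdot h=\chi(g)^{dk}h$; then $g\cdot(h/F^k)=h/F^k$, so the fraction is $G$-invariant, and conversely any $G$-invariant element of $\C[x_1,\dots,x_r][F^{-1}]$ becomes, after clearing by a power of $F$, a quotient of a weight-$\chi^{dk}$ semiinvariant by $F^k$. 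Using the polynomial description of sections of $\O(\chi^m)$ given above, this yields $D_+(F)\cong\spec\big(\C[(\C^r)_s]^G\big)=(\C^r)_s\sslash G$, which is a good categorical quotient by the affine case of Mumford's theorem.

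Finally I would glue. For sections $s,s'$ of weights $\chi^d,\chi^{d'}$ one has $(\C^r)_s\cap(\C^r)_{s'}=(\C^r)_{ss'}$ with $ss'$ semiinvariant of weight $\chi^{d+d'}$, while $D_+(F_s)\cap D_+(F_{s'})=D_+(F_{ss'})$, and the local identifications of the previous paragraph are compatible with the additional localization at $F_{s'}$; hence they patch into a morphism $\pi\colon(\C^r)^{ss}_\chi\to\op{Proj}(R_\chi)=\C^r\sslash_\chi G$ restricting over each $D_+(F_s)$ to the affine GIT quotient map. Since the property of being a good categorical quotient can be verified on an open cover of the target, $\pi$ is then a good categorical quotient of $(\C^r)^{ss}_\chi$ by $G$, as claimed. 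I expect the main obstacle to be exactly the local identification in the second paragraph: keeping track of the $\chi$-weights through the localization, and checking that every $G$-invariant regular function on $(\C^r)_s$ can be rewritten with a semiinvariant numerator of the correct weight, is the combinatorial heart of the proof, whereas finite generation and the gluing lemma for good quotients are invoked as standard black boxes.
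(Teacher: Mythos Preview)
The paper does not give its own proof of this proposition: it is stated as a background result and attributed to \cite[Proposition 14.1.12(c)]{CLS}, with no argument supplied. So there is nothing in the paper to compare your proposal against.

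That said, your outline is the standard argument and is essentially the one carried out in \cite{CLS}. A subgroup $G\subseteq(\C^\ast)^r$ is diagonalizable and hence reductive, so the affine GIT theorem applies on each $(\C^r)_s$; the identification $(R_\chi)_{(F_s)}\cong\C[(\C^r)_s]^G$ is exactly the content of the localization computation you describe; and good categorical quotients glue over an affine open cover of the target. One small remark: you should note that it suffices to take $s$ ranging over a finite set of homogeneous generators of $R_\chi$ in positive degree (which exist by finite generation), so that the $D_+(F_s)$ genuinely cover $\op{Proj}(R_\chi)$ and the $(\C^r)_s$ cover $(\C^r)^{ss}_\chi$; you allude to this via Noetherianity but it is worth making explicit. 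Otherwise the plan is sound.
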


Theorem 14.2.13 of \cite{CLS} shows, using a  polyhedron associated to the character $\chi$, that the GIT quotient $\C^r\sslash_\chi G$ is a toric variety.

\subsection{GKZ Fans}\label{sec:secfans}

Let $G \subseteq (\mathbb{C}^*)^r$. Studying the GIT quotient $\C^r\sslash_\chi G$ as $\chi$ varies gives rise to the \newterm{GKZ fan} of a toric variety, which has the structure of a \newterm{generalised fan}.

\begin{definition}\label{Def:GenFan}
A \newterm{generalised fan} $\Sigma$ in $N_{\R}$ is a finite collection of cones $\sigma\subseteq N_{\R}$ such that:
\begin{enumerate}
    \item Every $\sigma\in\Sigma$ is a rational polyhedral cone.
    \item For all $\sigma\in\Sigma$, each face of $\sigma$ is also in $\Sigma$.
    \item For all $\sigma_1,\sigma_2\in\Sigma$, the intersection $\sigma_1\cap\sigma_2$ is a face of each.
\end{enumerate}
\end{definition}
This agrees with the usual definition of a fan, with the exception that cones are not necessarily strongly convex. Consider the cone $\sigma_0=\bigcap_{\sigma\in\Sigma}\sigma$. It has no proper faces and is thus a subspace of $N_{\R}$. We consider the lattice $\overline{N}=N/(\sigma_0\cap N)$. To associate a toric variety for the generalised fan $\Sigma$, one constructs the fan $\overline{\Sigma}$ where each cone comes from a cone of $\Sigma$ quotiented by $\sigma_0$. This is a fan in the usual sense, and hence we can associate a toric variety to it as usual. Then $X_{\Sigma}:=X_{\overline{\Sigma}}$.

We will now discuss the notion of a GKZ fan, following  both \cite{CLS} and \cite{FK19}.
Consider a toric variety $X$. It can be written as a GIT quotient $(\C^r\setminus Z)\sslash_\chi G$.  Recall the character group $\widehat{G}$ of $G$. 
Each choice of character $\chi\in\widehat{G}$ determines an open subset $U_\chi:=(\C^r)^{ss}_\chi$, the semi-stable locus of $X$ with respect to $\chi$. Several different characters can give the same semi-stable locus.
Thinking of the vector space $\Hom(\widehat{G},T_N)\otimes_{\Z}\Q$ as parameter space for linearisations, we investigate where the semi-stable locus $U_\psi$ is the same as $U_\chi$ for a given character $\chi$.  It turns out that dividing the vector space into chambers where $U_\chi$ remains the same gives the space a natural fan structure. This fan-structure $\Sigma_{GKZ}$ is called the \newterm{GKZ fan}. Maximal cones are called \newterm{chambers} and codimension one cones are called \newterm{walls}.

Consider an arbitrary fan $\Sigma$, we can construct the GKZ fan as follows. Take the group $G=G_\Sigma\subseteq (\C^\ast)^r$ acting on $X_\Sigma$ to be the group in Equation \eqref{def:G}.
There is a well-known bijection between chambers of GKZ fans and regular triangulations of a certain set of points, constructed as follows.
In the general setting, apply $\Hom(-,\C^\ast)$ to the sequence
\[
0\rightarrow G\xrightarrow{i_G}(\C^\ast)^r\xrightarrow{proj}\coker(i_G)\rightarrow 0
\]
to obtain the sequence
\[
\Hom(\coker(i_G),\C^\ast)\xrightarrow{\widehat{proj}}\Z^r\xrightarrow{\widehat{i_G}}\Hom(G,\C^\ast)\rightarrow 0.
\]
Let $\nu_i(G)$ be the element of $\Hom(\coker(i_G),\C^\ast)^\vee$ given by composing $\widehat{proj}$ with the projection of $\Z^r$ onto its $i^{\text{th}}$ factor.
Compare this sequence with the sequence ~\eqref{eq:Divisor}. We in fact reversed the process of obtaining \eqref{eq:CoxCon} from \eqref{eq:Divisor}. Starting with the correct group acting on the space, we thus recover the map corresponding to $\iota$ as $\widehat{proj}$. Hence, the $\nu(G)$ correspond to the primitive generators $u_\rho$ of the rays of $\Sigma$.
Then the set we will triangulate is the convex hull of the set $\nu(G)=\{\nu_1(G),\dots,\nu_r(G)\}$.

\begin{Theorem}[Proposition 15.2.9 in \cite{CLS}]
\label{Thm:chambers2nd}
There is a bijection between chambers of the GKZ fan for the action of $G$ on $\C^r$ and regular triangulations of the set $\conv(\nu(G))$. In particular, there are only finitely many chambers of the GKZ fan.
\end{Theorem}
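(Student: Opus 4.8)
The plan is to reduce the statement to a combinatorial fact relating the point configuration $\nu(G)=\{\nu_1(G),\dots,\nu_r(G)\}$ to its Gale dual. Put $a_i:=\widehat{i_G}(e_i)\in\widehat G=\Hom(G,\C^\ast)$ for the image of the $i^{\text{th}}$ standard basis vector of $\Z^r$ under the map $\widehat{i_G}$ of \textsection\ref{sec:secfans}, and write $\sigma_I:=\cone(a_i\mid i\in I)$ for $I\subseteq\{1,\dots,r\}$. The first step is to show that a point $p\in\C^r$ with support $I_p:=\{i\mid p_i\ne0\}$ is $\chi$-semistable in the sense of Definition~\ref{Def:stablepoints} exactly when $\chi\in\sigma_{I_p}$: a $G$-invariant section of $\O(\chi^d)$ is a polynomial all of whose monomials $x^m$ have $G$-weight $\sum_i m_ia_i=d\chi$, and it is nonzero at $p$ iff one such monomial is supported inside $I_p$; letting $d$ range over the positive integers, such a monomial exists precisely when $\chi$ lies in the rational cone $\sigma_{I_p}$. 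Consequently $(\C^r)^{ss}_\chi=\{p\mid I_p\in\mathcal S(\chi)\}$, where $\mathcal S(\chi):=\{I\mid\chi\in\sigma_I\}$, and by Proposition~\ref{Prop:14.1.12inCLS} the quotient $\C^r\sslash_\chi G$ depends only on $\mathcal S(\chi)$.

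Next I would extract the fan. The assignment $\chi\mapsto\mathcal S(\chi)$ is constant on the relative interior of each cell of the arrangement cut out of the rational parameter space $\widehat G_\Q$ of \textsection\ref{sec:secfans} by the finitely many cones $\sigma_I$, and I would check that the closures of these cells meet the axioms of a generalised fan (Definition~\ref{Def:GenFan}): each is a rational polyhedral cone, its faces are again such cells, and two of them intersect in a common face because the $\sigma_I$ do. This is $\Sigma_{GKZ}$; its chambers are the connected components of $\cone(a_1,\dots,a_r)$ with the (relative) boundaries of all the $\sigma_I$ removed — outside $\cone(a_1,\dots,a_r)$ there are no semistable points at all — equivalently the maximal cones on which $\chi\mapsto(\C^r)^{ss}_\chi$ is constant.

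Finally I would identify these chambers with regular triangulations. Since $(\nu_1(G),\dots,\nu_r(G))$ and $(a_1,\dots,a_r)$ sit at the two ends of the exact sequence of \textsection\ref{sec:secfans}, they are Gale dual, and I would use this to identify $\widehat G_\Q$ with the space of height functions $h\in\R^r$ on the points of $\nu(G)$ modulo the functions that restrict to an affine map on $\conv(\nu(G))$. A height function in general position produces a regular triangulation of $\conv(\nu(G))$ by projecting the lower faces of $\conv\{(\nu_i,h_i)\}$; a Farkas-type linear programming argument shows two heights give the same triangulation iff they lie in the same chamber, and every chamber is realised this way. This is the asserted bijection, and finiteness follows at once, since a triangulation is a subfamily of the finitely many simplices spanned by subsets of $\nu(G)$.

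The hard part is not the first step, which is bookkeeping with weights, but the passage in the last two steps from ``regions where $(\C^r)^{ss}_\chi$ is locally constant'' to a genuine (generalised) fan that coincides \emph{on the nose and with the correct incidences} with the secondary fan of $\nu(G)$: one must verify that the constancy cells meet along common faces, pin down the Gale-duality conventions (which hull, which signs), and treat the degenerate cases — the $a_i$ failing to span, or a $\sigma_I$ failing to be strongly convex — via the quotient-lattice construction built into Definition~\ref{Def:GenFan}. This is exactly the content of \cite[\S15.2]{CLS} and \cite[\S2]{FK19}, which is why the statement is quoted above rather than reproved.
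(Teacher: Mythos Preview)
The paper does not prove this statement at all: it is quoted verbatim from \cite[Proposition~15.2.9]{CLS} and used as a black box, and you yourself note this in your final paragraph. Your sketch is a faithful outline of the argument in \cite[\S14--15]{CLS} (semistability via weight cones, the chamber decomposition as the common refinement of the $\sigma_I$, and the Gale-duality identification with the secondary fan of $\nu(G)$), so there is nothing to compare and nothing to correct.
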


Thus we can enumerate the chambers of the GKZ fan, say by $\sigma_1,\dots,\sigma_k$. For any of those chambers, we can choose a character in its interior and consider the semi-stable locus with respect to it. As this locus does not depend on the choice of character, but solely on the choice of chamber, denote the open affine associated to chamber $\sigma_p$ by $U_p$. By the above theorem, it will also correspond to a specific triangulation $\mathcal{T}_p$ of $\conv(\{\nu_1(G),\dots,\nu_r(G)\})$.

\subsection{Categories of singularities and some results on the equivalence of derived categories.}\label{sec:FK}

In this section, we introduce the categories of singularities (as outlined in \cite{O09}) and their equivalences to derived categories through VGIT,  reviewing $\S 4$ of \cite{FK19}.

 Let $X$ be a variety and $G$ an algebraic group acting on $X$ (on the left).
 
\begin{definition}
\label{Def:SingCat}
An object of $\dbcoh{[X/G]}$ is called \newterm{perfect} if it is locally quasi-isomorphic to a bounded complex of vector bundles. We denote the full subcategory of perfect objects by $\operatorname{Perf}([X/G])$. The Verdier quotient of $\dbcoh{[X/G]}$ by $\operatorname{Perf}([X/G])$ is called the \newterm{category of singularities} and denoted 
\[
\dsing{[X/G]}:=\dbcoh{[X/G]}/\operatorname{Perf}([X/G]).
\]
\end{definition}
\noindent By the following observation of Orlov's,  the category can be viewed as studying the geometry of the singular locus.

\begin{proposition}[Orlov, \cite{O09}]
Assume that $\op{coh}[X/G]$ has enough locally free sheaves. Let $i:U\rightarrow X$ be a $G$-equivariant open immersion such that the singular locus of $X$ is contained in $i(U)$. Then the restriction,
\[
i^\ast:\dsing{[X/G]}\rightarrow\dsing{[U/G]},
\]
is an equivalence of categories.
\end{proposition}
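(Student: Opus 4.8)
The plan is to reduce the statement to the corresponding statement for the ambient scheme $X$ together with a standard dévissage argument for the singularity category. First I would recall the basic description of the singularity category in terms of acyclic complexes: since $\op{coh}[X/G]$ has enough locally free sheaves, $\dsing{[X/G]}$ is equivalent to the homotopy category of totally acyclic complexes of $G$-equivariant locally free sheaves (the "stable" description), and likewise for $\dsing{[U/G]}$. Under this identification the functor $i^\ast$ is simply termwise restriction of complexes of vector bundles. The key input is that $i^\ast$ takes perfect complexes to perfect complexes and is exact (being a restriction along a flat equivariant morphism), so it descends to a well-defined triangulated functor on the Verdier quotients; this is the part that is essentially formal.

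The heart of the argument is showing this functor is an equivalence. For \emph{essential surjectivity}, I would argue as follows: given an object $E$ of $\dbcoh{[U/G]}$, one extends it to a coherent sheaf $\widetilde E$ on $[X/G]$ (possible because $[X/G]$ has the resolution property and coherent sheaves on the open substack extend, after possibly adding a subscheme supported on $X \setminus i(U)$); since $X \setminus i(U)$ misses the singular locus, $\widetilde E$ restricted to a neighbourhood of $X_{\mathrm{sing}}$ has finite projective dimension there, so $\widetilde E$ differs from a perfect complex by something supported away from the singular locus, and such objects are perfect. Hence $\widetilde E$ is a valid preimage in $\dsing{[X/G]}$ up to perfect complexes, which maps to $E$.

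For \emph{full faithfulness}, the cleanest route is again the acyclic-complex model: if $P$ and $Q$ are totally acyclic complexes of equivariant vector bundles on $[X/G]$, I would show that the restriction map on morphism spaces in the singularity category is an isomorphism by a localization argument. Concretely, $\Hom_{\dsing{[X/G]}}(P,Q) = \varinjlim \Hom_{\dbcoh{}}(P, Q[\ge 0])$ modulo perfect contributions, and any morphism $P \to Q$ (or nullhomotopy of one) that exists over $[U/G]$ can be extended over $[X/G]$ because the obstruction lives in a cohomology group supported on $X \setminus i(U)$, which is disjoint from the singular locus where $P, Q$ fail to be perfect; on the perfect part everything is killed in the quotient. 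Equivalently, one invokes the fact that for a closed substack $Z$ contained in the smooth locus, the subcategory of $\dbcoh{[X/G]}$ of objects supported on $Z$ consists of perfect complexes, so passing to the quotient by perfect complexes localizes away $Z$, which is exactly what restriction to the open complement of a larger $Z' \supseteq X\setminus i(U)$ does.

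The main obstacle I expect is the \emph{equivariant extension} step: making sure that a coherent sheaf (or bounded complex) on $[U/G]$ extends to $[X/G]$ as a coherent object, and that morphisms and homotopies extend, all $G$-equivariantly. This is standard when $[X/G]$ is a nice enough stack (noetherian, with the resolution property, so that quasi-coherent sheaves are filtered colimits of coherent ones and coherent sheaves extend), and these hypotheses are guaranteed here by the assumption that $\op{coh}[X/G]$ has enough locally free sheaves together with $X$ being a variety; but it is the place where one must be careful rather than purely formal. Everything else is the usual Orlov-style localization of the singularity category, for which the equivariant case goes through verbatim once the resolution property is in hand.
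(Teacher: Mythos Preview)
The paper does not give a proof of this proposition: it is stated as a result of Orlov with the citation \cite{O09} and no argument is supplied in the paper itself, so there is nothing to compare your proposal against.

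For what it is worth, your sketch is essentially Orlov's original argument: extend coherent sheaves from $U$ to $X$ for essential surjectivity, and for full faithfulness use that the kernel of $i^\ast$ on $\dbcoh{}$ consists of complexes supported on $X\setminus i(U)$, which lies in the smooth locus and hence consists of perfect complexes, so it vanishes in the Verdier quotient. One minor comment: the detour through totally acyclic complexes of vector bundles is not needed and in fact requires stronger hypotheses (typically a Gorenstein condition) than merely ``enough locally free sheaves''; the direct Verdier--quotient argument you describe at the end is both sufficient and closer to what Orlov actually does.
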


Next, consider a $G$-equivariant vector bundle $\mathcal{E}$ on $X$. Denote by $Z$ the zero locus of a $G$-invariant section $s\in H^0(X,\mathcal{E})$. Then $\langle -,s\rangle$ induces a global function on $\tot\mathcal{E}^\vee$. Let $Y$ be the zero-section of this pairing and consider the fibrewise dilation action on the torus $\mathbb{G}_m$. Then we have the following result.
\begin{Theorem}[Isik \cite{Isi13}, Shipman \cite{Shi12}, Hirano \cite{Hirano}]
\label{Thm:IsikShipman}
Suppose the Koszul complex on $s$ is exact. Then there is an equivalence of categories \[
\dsing{[Y/(G\times\mathbb{G}_m)]}\cong \dbcoh{[Z/G]}.
\]
\end{Theorem}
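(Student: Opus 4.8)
The statement is the equivariant Kn\"orrer-periodicity theorem of Isik and Shipman, so I would follow their route \cite{Isi13, Shi12, Hirano}, carried out $G$-equivariantly: pass from the singularity category to a category of matrix factorisations, and then identify the latter with $\dbcoh{[Z/G]}$ by means of an explicit Koszul functor. Throughout, write $M:=\tot\mathcal{E}^\vee$ with projection $p\colon M\to X$, let $G\times\gm$ act on $M$ with $\gm$ dilating the fibres, and let $w=\langle-,s\rangle$, a $\gm$-semi-invariant function of weight $1$, so that $Y=w^{-1}(0)\subseteq M$.

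\textbf{Step 1: from singularities to matrix factorisations.} First I would invoke the equivariant form of Orlov's theorem (see \cite{Isi13} and \cite{O09}, and \cite{Hirano} in the stacky setting) to obtain an equivalence $\dsing{[Y/(G\times\gm)]}\cong \dabsFact{[M/(G\times\gm)],w}$ between the equivariant singularity category of the zero fibre and the absolute derived category of $(G\times\gm)$-equivariant matrix factorisations of $w$ on $M$. The fact that $w$ has \emph{positive} $\gm$-weight is precisely what makes this step work: it collapses the potential extra $\gm$-grading into the $2$-periodicity of factorisations and makes the various flavours of derived category of factorisations coincide.

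\textbf{Step 2: the Koszul functor.} On $M$ there is a tautological section $\tau\in H^0(M,p^\ast\mathcal{E}^\vee)$, $(x,\xi)\mapsto\xi$, of $\gm$-weight $1$, together with $p^\ast s\in H^0(M,p^\ast\mathcal{E})$; by construction the contraction $\langle\tau,p^\ast s\rangle$ equals $w$. The pair $(p^\ast s,\tau)$ is thus Koszul data for $w$ and determines a $(G\times\gm)$-equivariant Koszul matrix factorisation $K:=K(p^\ast s,\tau)$, built on $\Lambda^\bullet p^\ast\mathcal{E}^\vee$ with differential $\tau\wedge(-)+\iota_{p^\ast s}$. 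Pulling back along $p$ and tensoring with $K$ then defines a functor from $\dbcoh{[X/G]}$ to $\dabsFact{[M/(G\times\gm)],w}$; precomposing with $j_\ast$ for the closed immersion $j\colon Z\hookrightarrow X$ gives the candidate equivalence $\Psi\colon\dbcoh{[Z/G]}\to\dabsFact{[M/(G\times\gm)],w}$. Morally $K$ is a matrix-factorisation model for the structure sheaf of the zero section restricted over $Z$, and it is exactly here that the hypothesis that the Koszul complex on $s$ is exact is used: it guarantees that $K$ genuinely resolves $\O_Z$ and that $Z$ is a complete intersection of the expected codimension.

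\textbf{Step 3: full faithfulness and essential surjectivity, and the main obstacle.} For full faithfulness I would use the Koszul resolution of $p_\ast\O_Z$-modules to reduce the computation of $\Hom$-complexes in $\dabsFact{[M/(G\times\gm)],w}$ to $\Hom$-complexes in $\dbcoh{[Z/G]}$, exactness of the Koszul complex on $s$ making this comparison an isomorphism. For essential surjectivity I would argue by generation: over $X\setminus Z$ the section $s$ is nowhere zero, so $w$ has no critical points along the fibres of $M$ there and $\dabsFact{[(M\setminus p^{-1}Z)/(G\times\gm)],w}$ vanishes; hence every equivariant factorisation is, up to the triangulated structure, "supported over $Z$", and a d\'evissage/Nakayama argument along $Z$ shows it lies in the thick subcategory generated by the $K\otimes p^\ast(\text{vector bundles on }X)$, i.e.\ in the essential image of $\Psi$. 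Composing with Step 1 yields the asserted equivalence. The main obstacle is precisely this essential-surjectivity step, together with the bookkeeping needed to run the whole argument $G$-equivariantly over a possibly non-affine base $X$ rather than in the affine, non-equivariant model of \cite{Isi13}; the equivariant Orlov theorem of Step 1 is also technical but can be cited. In all three steps the exactness of the Koszul complex on $s$ is indispensable — without it $Z$ is not a well-behaved complete intersection and neither the $\Hom$-comparison nor the identification of $K$ with $\O_Z$ goes through.
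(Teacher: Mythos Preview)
The paper does not give its own proof of this statement: Theorem~\ref{Thm:IsikShipman} is quoted as a background result and attributed to Isik, Shipman, and Hirano, with no argument supplied beyond the citations. Your proposal is therefore not to be compared against anything in the paper itself; rather, it is a sketch of the proof in the cited references.

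As such a sketch it is accurate in outline. Your three steps --- Orlov's equivalence between the graded singularity category and matrix factorisations, the Koszul matrix factorisation built from the tautological section and $p^\ast s$, and the full-faithfulness/generation argument --- are precisely the skeleton of Isik's proof \cite{Isi13}, with Hirano \cite{Hirano} supplying the stacky/equivariant upgrades you flag in Step~3. One small point: Shipman's route \cite{Shi12} is organised somewhat differently, proceeding via a window/grade-restriction argument rather than the explicit Koszul functor you describe, so your sketch is closer to Isik than to Shipman. For the purposes of this paper, however, the theorem is simply imported as a black box, and your level of detail already exceeds what the paper provides.
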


\noindent Combining the previous two results gives the following.

\begin{Corollary}[Corollary 3.4 in \cite{FK19}]
\label{Cor:FK19-3.4}
Let $V$ be an algebraic variety with a $G\times\mathbb{G}_m$ action. Suppose there is an open subset $U\subseteq V$ such that $U$ is $G\times\mathbb{G}_m$ equivariantly isomorphic to $Y$ as above and that $U$ contains the singular locus of $X$. Then \[
\dsing{[V/(G\times\mathbb{G}_m)]}\cong \dbcoh{[Z/G]}.
\]
\end{Corollary}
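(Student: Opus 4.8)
The plan is to chain together the two equivalences just recalled, using the open subset $U$ as a bridge. First I would apply Orlov's restriction result recalled above to the $G\times\mathbb{G}_m$-equivariant open immersion $i\colon U\hookrightarrow V$. By hypothesis $U$ contains the singular locus of $V$; moreover, in the situation of interest $\op{coh}[V/(G\times\mathbb{G}_m)]$ has enough locally free sheaves (this is the standing assumption inherited from $\S 4$ of \cite{FK19}, and holds because the relevant stacks are global quotients of quasi-projective schemes by reductive groups equipped with a compatible linearization). Hence Orlov's proposition applies and yields an equivalence
\[
i^\ast\colon \dsing{[V/(G\times\mathbb{G}_m)]}\;\xrightarrow{\ \sim\ }\;\dsing{[U/(G\times\mathbb{G}_m)]}.
\]

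Next I would transport this along the equivariant isomorphism supplied by the hypothesis. Since $U$ is $G\times\mathbb{G}_m$-equivariantly isomorphic to $Y$, the induced isomorphism of quotient stacks $[U/(G\times\mathbb{G}_m)]\cong[Y/(G\times\mathbb{G}_m)]$ gives an equivalence $\dsing{[U/(G\times\mathbb{G}_m)]}\cong\dsing{[Y/(G\times\mathbb{G}_m)]}$. Finally I would invoke Theorem~\ref{Thm:IsikShipman}: assuming the Koszul complex on the section $s$ cutting out $Z$ is exact (equivalently, $s$ is a regular section of $\mathcal{E}$, which must be checked in each application), we obtain $\dsing{[Y/(G\times\mathbb{G}_m)]}\cong\dbcoh{[Z/G]}$. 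Composing the three equivalences produces the desired identification $\dsing{[V/(G\times\mathbb{G}_m)]}\cong\dbcoh{[Z/G]}$.

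Each of the three steps is short; the content lies entirely in verifying that the hypotheses of the two cited theorems are met. The step I expect to require the most care is the first: one must ensure that the ambient quotient stack $[V/(G\times\mathbb{G}_m)]$ lies in the class for which Orlov's localization theorem is valid — essentially that it admits enough equivariant vector bundles — since in the intended applications $V$ is not the affine space $\C^r$ but a partial compactification of it, and one must confirm that the singular locus is genuinely contained in the chart $U\cong Y$. Granting this (and the regularity of $s$), the remainder is a formal manipulation of Verdier quotients.
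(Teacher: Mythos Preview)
Your proposal is correct and is exactly the argument the paper has in mind: the text introduces this corollary with the sentence ``Combining the previous two results gives the following,'' and your three-step chain (Orlov restriction, transport along the equivariant isomorphism $U\cong Y$, then Isik--Shipman) is precisely that combination. Your additional remarks on what must be checked in applications (enough equivariant vector bundles, regularity of $s$) are accurate but go beyond what the paper records here.
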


We will move towards making these results applicable to the objects studied in this paper, adapting \cite{FK19}.
Consider an affine space $X:=\mathbb{A}^{n+t}$ with coordinates $x_i, u_j$ for $1\le i\le n, 1\le j\le t$. Let $T$ denote the standard open torus $\mathbb{G}_m^{n+t}$ and consider a subgroup $S\subseteq T$, with $\tilde{S}$ the connected component that contains the identity. 

Recall the notion of GKZ fans from $\S \ref{sec:secfans}$. We adjust the notation so that $S$ above corresponds to the group $G$ from $\S \ref{sec:secfans}$. We will now explain how to construct varieties corresponding to the chambers of the GKZ fan, and the goal of this setup is to  apply Corollary $\ref{Cor:FK19-3.4}$ and VGIT to provide equivalences between derived categories.

\begin{definition}
Let $G$ be a group acting on a space $X$ and $f$ a global function on $X$. $f$ is said to be \newterm{semi-invariant} with respect to a character $\chi$ if, for any $g\in G$, $f(g\cdot x)=\chi(g)f(x).$
\end{definition}
To apply Corollary $\ref{Cor:FK19-3.4}$, we will add a $\mathbb{G}_m$-action which is $S$-invariant and $\mathbb{G}_m$-semi-invariant, acting with weight $0$ on the $x_i$ and $1$ on the $u_j$. We refer to this action as \newterm{R-charge}.
Consider the action of $S$ on the scheme $\spec \mathbb{C}[u_j]$. It corresponds to a character $\gamma_j$ of $S$. Let $f_1,\dots,f_t$ be a collection of $S$-semi-invariant functions in the $x_i$ with respect to $\gamma_j^{-1}$. Then define a function, called \newterm{superpotential}, by \[
w:=\sum_{j=1}^t u_j f_j.
\] 
The superpotential $w$ is $S$-invariant and $\chi$-semi-invariant with respect to the projection character $\chi: S\times\mathbb{G}_m\rightarrow\mathbb{G}_m$, hence $w$ is homogeneous of degree 0 with respect to the $S$-action and of degree 1 with respect to the R-charge. Let $Z(w)\subseteq X$ be its zero-locus and define $Y_p:= Z(w)\cap U_p$. Then we have the following result.

\begin{Theorem}[Theorem 3 in \cite{HW12}]
\label{Thm:Herbst-Walcher}
If $S$ is quasi-Calabi-Yau, there is an equivalence of categories\[
\dsing{[Y_p/S\times\mathbb{G}_m]}\cong\dsing{[Y_q/S\times\mathbb{G}_m]}
\]
for all $1\le p,q\le k$, where $k$ is the number of chambers in the GKZ fan.
\end{Theorem}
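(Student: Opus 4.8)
The statement to prove is Theorem~\ref{Thm:Herbst-Walcher} (Theorem 3 in \cite{HW12}): under the quasi-Calabi-Yau hypothesis on $S$, the singularity categories $\dsing{[Y_p/S\times\gm]}$ and $\dsing{[Y_q/S\times\gm]}$ are equivalent for any two chambers $p,q$ of the GKZ fan.

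\medskip

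\textbf{Proof proposal.} The plan is to reduce to comparing chambers that share a common wall and then to argue via a standard ``window'' / VGIT argument. First I would observe that the GKZ fan is connected (its support is the full parameter space $\Hom(\widehat{S},T_N)\otimes_{\Z}\Q$, and maximal cones are glued along walls), so it suffices to treat the case where $\sigma_p$ and $\sigma_q$ are adjacent chambers separated by a single wall $\tau$; the general case follows by composing the equivalences along a path of chambers. Next I would recall the setup of Corollary~\ref{Cor:FK19-3.4} applied to $V = X = \mathbb{A}^{n+t}$ with the $S\times\gm$-action, where the superpotential $w = \sum_j u_j f_j$ plays the role of the function cutting out $Z(w)$; the open sets $U_p$ and $U_q$ are the semistable loci attached to characters chosen in the interiors of the two chambers, so $Y_p = Z(w)\cap U_p$ and $Y_q = Z(w)\cap U_q$. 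The key input is the behaviour of the GIT quotient as one crosses the wall $\tau$: a character $\chi_0$ on the wall determines a semistable locus $U_0 \supseteq U_p \cup U_q$ whose unstable complement, relative to each of $U_p$ and $U_q$, is a sum of attracting/repelling strata governed by the one-parameter subgroup $\lambda$ normal to the wall (the Kirwan--Ness / Hesselink--Kempf stratification). On these strata one computes the $\lambda$-weights of the restriction of the superpotential and of the relevant line bundles.

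\medskip

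With that in hand, the argument runs as follows. I would construct a ``grade-restriction window'': a triangulated subcategory $\mathcal{W} \subseteq \dbcoh{[Z(w)\cap U_0 / S\times\gm]}$ cut out by bounding the $\lambda$-weights of (cohomology sheaves of) complexes restricted to the wall strata to lie in a suitable half-open interval of length equal to the relevant codimension. The point is then the two semiorthogonal-decomposition statements: restriction $\mathcal{W} \hookrightarrow \dbcoh{[Z(w)\cap U_0/\cdots]} \to \dbcoh{[Y_p/\cdots]}$ and likewise for $q$ become \emph{equivalences} after passing to singularity categories, because the ``extra'' objects that distinguish the window from the full category on each side are supported on the unstable strata and, by the quasi-Calabi-Yau (i.e.\ Gorenstein/trivial-determinant) condition on $S$, become perfect — hence zero in $\dsing$ — on the appropriate side. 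Composing the two equivalences $\dsing{[Y_p/\cdots]} \xleftarrow{\sim} \mathcal{W}/(\mathcal{W}\cap\mathrm{Perf}) \xrightarrow{\sim} \dsing{[Y_q/\cdots]}$ gives the desired equivalence across a single wall, and iterating over a path between arbitrary chambers $p$ and $q$ finishes the proof. (Alternatively one can invoke Corollary~\ref{Cor:FK19-3.4} directly on $U_0$ after checking the hypotheses for both $p$ and $q$, which is the route \cite{HW12} and \cite{FK19} take.)

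\medskip

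\textbf{Main obstacle.} The crux is verifying the weight bookkeeping that makes the window work: one must show that the quasi-Calabi-Yau condition on $S$ exactly forces the $\lambda$-weight of the canonical bundle (equivalently, the sum of the $\lambda$-weights of the coordinates $x_i, u_j$ together with the twist coming from $w$) along each wall stratum to vanish, so that the window interval can be chosen of the correct width simultaneously on both sides of the wall and the ``defect'' categories land in $\operatorname{Perf}$. Getting the Koszul/exactness hypotheses of Theorem~\ref{Thm:IsikShipman} to hold on the larger locus $U_0$, and checking that the singular locus is genuinely contained in the relevant open subset as required by Corollary~\ref{Cor:FK19-3.4}, are the technical points that need care; everything else is a formal consequence of VGIT wall-crossing and the already-cited results of Isik, Shipman, Hirano, and Favero--Kelly.
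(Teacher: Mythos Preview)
The paper does not prove Theorem~\ref{Thm:Herbst-Walcher}. It is quoted verbatim from \cite{HW12} as a black-box input in the background section~\S\ref{sec:FK}, with no argument supplied; the paper only \emph{uses} it (together with Proposition~\ref{Prop:FK19Prop4.7}) to deduce Corollary~\ref{Cor:FK19Cor4.8}. So there is no ``paper's own proof'' to compare your proposal against.

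That said, your sketch is in the right spirit and aligns with the standard argument in the literature (Herbst--Walcher, Segal, Halpern-Leistner, Ballard--Favero--Katzarkov): reduce to a single wall by connectedness of the GKZ fan, pick the one-parameter subgroup normal to the wall, and use a grade-restriction window to identify the categories on either side. Two small corrections. First, the phrasing ``the extra objects become perfect, hence zero in $\dsing$'' is not quite how the quasi-Calabi-Yau hypothesis enters: the point is rather that the window width equals the sum of the positive $\lambda$-weights on one side and the sum of the negative $\lambda$-weights on the other, and quasi-CY forces these to coincide, so the window subcategory is \emph{already} equivalent to both sides at the level of factorization (equivalently $\dsing$) categories --- there is no defect to quotient out. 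Second, your appeals to Theorem~\ref{Thm:IsikShipman} and Corollary~\ref{Cor:FK19-3.4} in the final paragraph are misplaced: those results translate between $\dsing$ and $\dbcoh$ of a complete intersection and play no role in the wall-crossing itself; they enter only later, in the deduction of Corollary~\ref{Cor:FK19Cor4.8}.
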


We will use this result to show a useful equivalence of derived categories. We start by explicitly describing the open sets $U_p$ corresponding to a chamber $\sigma_p$ of the GKZ fan, defined in $\S \ref{sec:secfans}$.
For $1\le p\le k$ we associate an irrelevant ideal $\I_p$ to $\sigma_p$ by considering the (regular) triangulation $\mathcal{T}_p$ that the chamber corresponds to.
So, let 
\[
\I_p:=\left\langle \prod_{i\not\in I}x_i\prod_{j\not\in J}u_j\ \Big\vert\  \bigcup_{i\in I}\nu_i(S)\cup\bigcup_{j\in J}\nu_{n+j}(S)\text{ is the set of vertices of a simplex in }\mathcal{T}_p\right\rangle.
\]
Then $U_p=X\setminus Z(\I_p)$. Another ideal we will need is a subideal of $\I_p$, given similarly to $\I_p$ by requiring $J$ to be the full set $\{1,\dots,t\}$, i.e.,
\[
\J_p:=\left\langle \prod_{i\not\in I}x_i\ \Big\vert\  \bigcup_{i\in I}\nu_i(S)\cup\bigcup_{j=1}^t\nu_{n+j}(S)\text{ is the set of vertices of a simplex in }\mathcal{T}_p\right\rangle.
\]
This ideal is therefore generated by those simplices whose sets of vertices contain all $\nu_{n+j}$ for $1\le j \le t$.
Using this subideal, we get a new open set $V_p:=X\setminus Z(\J_p)\subseteq U_p$.
Since $\J_p$ has no $u_j$ in its generators, we can see it as ideal $\mathcal{J}_p^x$ in $\C[x_1,\dots,x_n]$, giving an open subset of $\mathbb{A}^n$ by $V_p^x:=\mathbb{A}^n\setminus Z(\J_p^x)$. This set gives us a toric stack $X_p:=[V_p^x/S]$.
Now suppose $\J_p$ is non-zero. Then the last two quantities defined are nonempty, and one can show $[V_p/S]$ is a vector bundle over $X_p$, with the inclusion of rings $\C[x_1,\dots,x_n]\rightarrow \C[x_1,\dots,x_n,u_1,\dots,u_t]$ restricting to a $S$-equivariant morphism\[
[V_p/S]\rightarrow[V_p^x/S]=X_p.
\]
This morphism gives the following proposition.
\begin{proposition}[Proposition 4.6 in \cite{FK19}]
\label{Prop:FK19Prop4.6}
Suppose $\J_p$ is non-zero. The morphism $[V_p/S]\rightarrow X_p$ realizes $[V_p/S]$ as the total space of a vector bundle \[
[V_p/S]\cong\tot\bigoplus_{j=1}^t\O(\gamma_j).
\]
Furthermore, the R-charge action of $\mathbb{G}_m$ is the dilation action along fibers. Finally, for each $j$, the function $f_j$ gives a section of $\O(\gamma_j^{-1})$ and the superpotential $w=\sum u_jf_j$ restricts to the pairing with the section $\oplus f_j$.
\end{proposition}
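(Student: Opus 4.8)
The plan is to reduce the statement to elementary equivariant descent; the crux is that the open set $V_p$ is a \emph{product} in the $x$- and $u$-directions.

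First I would observe that, by construction, the ideal $\J_p$ is generated by monomials involving only the variables $x_1,\dots,x_n$: the generator attached to a simplex of $\mathcal{T}_p$ is $\prod_{i\notin I}x_i$, carrying no $u_j$-factor, precisely because $J$ is taken to be the full set $\{1,\dots,t\}$. Hence $\J_p$ is the extension to $\C[x_1,\dots,x_n,u_1,\dots,u_t]$ of the ideal $\J_p^x\subseteq\C[x_1,\dots,x_n]$, so $Z(\J_p)=Z(\J_p^x)\times\A^t$ inside $\A^{n+t}=\A^n\times\A^t$, and therefore
\[
V_p = X\setminus Z(\J_p) = \bigl(\A^n\setminus Z(\J_p^x)\bigr)\times\A^t = V_p^x\times\A^t .
\]
Since $S\subseteq T=\Gm^{n+t}$ acts coordinatewise, this identification is $S$-equivariant, with $S$ acting on the first factor through its action on $\A^n$ and on the fiber $\A^t=\spec\C[u_1,\dots,u_t]$ linearly by the characters $\gamma_1,\dots,\gamma_t$. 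The hypothesis $\J_p\ne0$ ensures $V_p^x\ne\emptyset$, so $X_p=[V_p^x/S]$ is the intended stack, and the morphism $[V_p/S]\to X_p$ induced by $\C[x_1,\dots,x_n]\hookrightarrow\C[x_1,\dots,x_n,u_1,\dots,u_t]$ is simply the projection $[(V_p^x\times\A^t)/S]\to[V_p^x/S]$.

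Next I would invoke equivariant descent: a trivial $S$-equivariant bundle $V_p^x\times E$ with $S$ acting linearly on the fiber $E=\bigoplus_{j=1}^t\C_{\gamma_j}$ descends to the vector bundle over $[V_p^x/S]$ whose total space is $[(V_p^x\times E)/S]$, and for $E$ a direct sum of one-dimensional representations this bundle is, by the definition of $\O_{X_p}(\gamma)$ as the line bundle associated to the character $\gamma$, precisely $\bigoplus_{j=1}^t\O_{X_p}(\gamma_j)$; alternatively one may cite Proposition~\ref{Prop:repeatProp7.3.1} after matching $[V_p^x/S]$ with its fan presentation. Either way this yields $[V_p/S]\cong\tot\bigoplus_{j=1}^t\O_{X_p}(\gamma_j)$ over $X_p$, with the map to $X_p$ the bundle projection. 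The R-charge $\Gm$-action has weight $0$ on the $x_i$ and weight $1$ on every $u_j$, i.e. it is the uniform scaling $\mu\cdot(v,u_1,\dots,u_t)=(v,\mu u_1,\dots,\mu u_t)$ of the fiber; it commutes with $S$ and hence descends to the fiberwise dilation.

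For the sections: since $f_j$ is $S$-semi-invariant of weight $\gamma_j^{-1}$, its restriction to $V_p^x$ is an $S$-equivariant map $V_p^x\to\C_{\gamma_j^{-1}}$, i.e. a global section of $\O_{X_p}(\gamma_j^{-1})=\O_{X_p}(\gamma_j)^\vee$. Under the identification above the $u_j$ are the tautological fiber coordinates of $\tot\bigoplus_j\O_{X_p}(\gamma_j)$, so $w=\sum_ju_jf_j$ is the contraction of the tautological section with $\bigoplus_jf_j$, i.e. $w$ restricts to the pairing with $\oplus f_j$. I expect the only genuinely delicate point to be the bookkeeping in the descent step — keeping the character conventions straight so that the line bundles come out as $\O_{X_p}(\gamma_j)$ and not their inverses, and (if one goes via Proposition~\ref{Prop:repeatProp7.3.1}) identifying the fan of $X_p$, namely the link of the simplex on $\nu_{n+1}(S),\dots,\nu_{n+t}(S)$ in $\mathcal{T}_p$, together with the divisor classes $\gamma_1,\dots,\gamma_t$. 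This is routine but fiddly; the underlying geometry is immediate once $V_p=V_p^x\times\A^t$ is in hand.
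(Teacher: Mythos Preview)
The paper does not actually supply a proof of this proposition: it is quoted verbatim as Proposition~4.6 of \cite{FK19}, and the surrounding text only sketches the setup (namely that $\J_p$ has no $u_j$ in its generators, so one may pass to $\J_p^x\subseteq\C[x_1,\dots,x_n]$ and obtain the $S$-equivariant morphism $[V_p/S]\to[V_p^x/S]=X_p$ from the ring inclusion). Your argument is correct and is precisely the natural fleshing-out of that sketch: the product decomposition $V_p=V_p^x\times\A^t$ is exactly what the paper's observation about $\J_p$ having no $u_j$-generators buys you, and the rest is straightforward equivariant descent. There is nothing to compare against beyond this, and your bookkeeping caveats about character conventions are the only places where care is needed.
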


In particular, from this we can view the function $\oplus f_j$ as a section of $V_p$ which defines, for all $p$, a complete intersection $Z_p:=Z(\oplus f_j)\subseteq X_p$. 
Finally, we introduce the Jacobian ideal $\partial w$, generated by the partial derivatives of $w$ with respect to the coordinates $x_i,u_j$. 

\begin{proposition}[Proposition 4.7 in \cite{FK19}]
\label{Prop:FK19Prop4.7}
Suppose $\J_p$ is non-zero. If $\I_p\subseteq\sqrt{\partial w,\J_p}$, then\[
\dsing{[Y_p/S\times\mathbb{G}_m]}\cong\dbcoh{Z_p}.
\]
\end{proposition}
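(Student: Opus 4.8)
The plan is to pull $[Y_p/S\times\mathbb{G}_m]$ back, along an open immersion that is harmless on categories of singularities, to the situation of Theorem~\ref{Thm:IsikShipman}, and then apply that theorem. The first move is to pass from $Y_p=Z(w)\cap U_p$ to the smaller $Z(w)\cap V_p$. Proposition~\ref{Prop:FK19Prop4.6} says that $[V_p/S]$ is the total space of $\mathcal{E}^\vee:=\bigoplus_{j=1}^{t}\O_{X_p}(\gamma_j)$ over $X_p=[V_p^x/S]$, that the R-charge $\mathbb{G}_m$ acts by dilation along the fibres, that each $f_j$ is a section of $\O_{X_p}(\gamma_j^{-1})$ — so $s:=\oplus_j f_j$ is a section of $\mathcal{E}:=\bigoplus_j\O_{X_p}(\gamma_j^{-1})$ with $Z(s)=Z_p$ — and that $w=\sum_j u_j f_j$ restricts to the tautological pairing $\langle-,s\rangle$ on $\tot\mathcal{E}^\vee=[V_p/S]$. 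Hence $[Z(w)\cap V_p/S\times\mathbb{G}_m]$ is precisely the zero locus $Y$ of $\langle-,s\rangle$ inside $\tot\mathcal{E}^\vee$ equipped with the fibrewise dilation, in the notation of Theorem~\ref{Thm:IsikShipman}, with the associated $Z$ equal to $Z_p$. Since $Z_p$ is a complete intersection, the Koszul complex on $s$ is exact, so Theorem~\ref{Thm:IsikShipman} gives
\[
\dsing{[Z(w)\cap V_p/S\times\mathbb{G}_m]}\cong\dbcoh{Z_p}.
\]

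Next I would show that the open immersion $[Z(w)\cap V_p/S\times\mathbb{G}_m]\hookrightarrow[Y_p/S\times\mathbb{G}_m]$ induces an equivalence on categories of singularities. By Orlov's restriction result (the proposition following Definition~\ref{Def:SingCat}), which applies because $Y_p$ is quasi-affine and $S\times\mathbb{G}_m$ is a torus, so the relevant category has enough locally free sheaves, it suffices to check that the singular locus of the scheme $Y_p$ is contained in $Z(w)\cap V_p$. Now $Y_p=Z(w)\cap U_p$ is a hypersurface in the smooth variety $U_p=\mathbb{A}^{n+t}\setminus Z(\I_p)$, so its singular locus is the Jacobian locus $Z(w)\cap Z(\partial w)\cap U_p$, where $\partial w$ is the ideal generated by all the partials $\partial w/\partial x_i$ and $\partial w/\partial u_j$. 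The superpotential $w$ is homogeneous of degree $1$ for the R-charge (weight $0$ on the $x_i$, weight $1$ on the $u_j$), so Euler's identity gives $w=\sum_j u_j\,\partial w/\partial u_j\in(\partial w)$; therefore the Jacobian locus equals $Z(\partial w)\cap U_p$, and requiring it to lie inside $V_p=\mathbb{A}^{n+t}\setminus Z(\J_p)$ is equivalent to $Z\bigl((\partial w)+\J_p\bigr)\subseteq Z(\I_p)$, i.e., by the Nullstellensatz, to the hypothesis $\I_p\subseteq\sqrt{\partial w,\J_p}$. (The non-vanishing of $\J_p$ is exactly what makes $X_p$ and the vector bundle of Proposition~\ref{Prop:FK19Prop4.6} available in the first place.) All loci involved are $S\times\mathbb{G}_m$-invariant, so the argument descends to the quotient stacks.

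Composing the two equivalences yields $\dsing{[Y_p/S\times\mathbb{G}_m]}\cong\dbcoh{Z_p}$; equivalently, the two steps can be packaged into a single application of Corollary~\ref{Cor:FK19-3.4} with $V=[Y_p/S\times\mathbb{G}_m]$ and open substack $U=[Z(w)\cap V_p/S\times\mathbb{G}_m]$. The step I expect to cause the most trouble is the identification of the singular locus of $Y_p$: one must notice that the defining equation $w$ contributes nothing beyond its partial derivatives, which is a consequence of the Euler relation for the R-charge grading — without this observation the hypothesis $\I_p\subseteq\sqrt{\partial w,\J_p}$ would look too weak to force the singular locus into $V_p$ — and one must keep the conventions for $\mathcal{E}$ versus $\mathcal{E}^\vee$ and the signs of the characters $\gamma_j$ straight when matching Proposition~\ref{Prop:FK19Prop4.6} to Theorem~\ref{Thm:IsikShipman}.
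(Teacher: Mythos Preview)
The paper does not prove this proposition; it is quoted from \cite{FK19} as background, so there is no in-paper argument to compare against. Your proof is correct and is exactly the argument one expects (and finds) in \cite{FK19}: Proposition~\ref{Prop:FK19Prop4.6} puts $[V_p/S]$ in the form required by Theorem~\ref{Thm:IsikShipman}, and the hypothesis $\I_p\subseteq\sqrt{\partial w,\J_p}$, together with the Euler relation $w=\sum_j u_j\,\partial w/\partial u_j\in(\partial w)$, shows that the singular locus of $Y_p$ lies in $V_p$, so Orlov's restriction result applies---which is precisely the content of Corollary~\ref{Cor:FK19-3.4}, as you note. The only point worth flagging is your clause ``since $Z_p$ is a complete intersection, the Koszul complex on $s$ is exact'': this is not automatic from the stated hypotheses and in \cite{FK19} is part of the standing setup rather than a consequence of $\I_p\subseteq\sqrt{\partial w,\J_p}$, so you should record it as an assumption rather than an inference.
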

\noindent This finally leads us to the following result, which we will use in $\S \ref{sec:linking}$. 
\begin{Corollary}
\label{Cor:FK19Cor4.8}
Assume $S$ satisfies the quasi-Calabi-Yau condition and that $\J_p$ and $\J_q$ are non-zero. If $\I_p\subseteq\sqrt{\partial w,\J_p}$ and $\I_q\subseteq\sqrt{\partial w,\J_q}$ for some $1\le p,q\le r$, then\[
\dbcoh{Z_p}\cong\dbcoh{Z_q}.
\]
\end{Corollary}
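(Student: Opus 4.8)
The plan is to obtain the equivalence by concatenating the three preceding results, using the fact that all of them are formulated inside the same ambient setup of this subsection (the affine space $X=\mathbb{A}^{n+t}$, the subgroup $S\subseteq T$, the R-charge $\mathbb{G}_m$-action, and the superpotential $w=\sum_{j=1}^t u_jf_j$). First I would check that the hypotheses of Proposition~\ref{Prop:FK19Prop4.7} are met for the index $p$: the ideal $\J_p$ is non-zero by assumption, and the containment $\I_p\subseteq\sqrt{\partial w,\J_p}$ is also assumed. Hence Proposition~\ref{Prop:FK19Prop4.7} produces an equivalence $\dsing{[Y_p/S\times\mathbb{G}_m]}\cong\dbcoh{Z_p}$. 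Repeating the argument verbatim with $q$ in place of $p$ — again $\J_q$ is non-zero and $\I_q\subseteq\sqrt{\partial w,\J_q}$ — yields $\dsing{[Y_q/S\times\mathbb{G}_m]}\cong\dbcoh{Z_q}$.

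Next I would invoke the quasi-Calabi-Yau hypothesis on $S$ to apply Theorem~\ref{Thm:Herbst-Walcher}, which gives $\dsing{[Y_p/S\times\mathbb{G}_m]}\cong\dsing{[Y_q/S\times\mathbb{G}_m]}$ for any two chambers $\sigma_p,\sigma_q$ of the GKZ fan. Composing the three equivalences
\[
\dbcoh{Z_p}\;\cong\;\dsing{[Y_p/S\times\mathbb{G}_m]}\;\cong\;\dsing{[Y_q/S\times\mathbb{G}_m]}\;\cong\;\dbcoh{Z_q}
\]
then gives the desired statement.

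The argument is essentially formal once the three building blocks are in hand, so the only point demanding attention is bookkeeping: one must make sure that the objects $Y_p,Y_q$, the complete intersections $Z_p,Z_q$, and the acting group $S\times\mathbb{G}_m$ appearing in Proposition~\ref{Prop:FK19Prop4.7} are literally the same as those appearing in Theorem~\ref{Thm:Herbst-Walcher} — in particular that the R-charge action and the superpotential $w$ defining $Y_p=Z(w)\cap U_p$ coincide in both. This is immediate, since both results are set up within the common framework established before Proposition~\ref{Prop:FK19Prop4.6}, so no compatibility issue arises. I therefore do not expect any genuine obstacle in this corollary; its content is entirely carried by Proposition~\ref{Prop:FK19Prop4.7} (which in turn rests on Theorem~\ref{Thm:IsikShipman} and Corollary~\ref{Cor:FK19-3.4}) together with Theorem~\ref{Thm:Herbst-Walcher}.
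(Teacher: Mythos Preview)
Your proposal is correct and matches the paper's intended argument exactly: the corollary is stated in the paper without proof, precisely because it is the immediate concatenation of Proposition~\ref{Prop:FK19Prop4.7} (applied to $p$ and to $q$) with Theorem~\ref{Thm:Herbst-Walcher}, which is exactly the chain of equivalences you wrote down.
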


\section{The Libgober-Teitelbaum and the Batyrev-Borisov constructions}\label{sec:LT}
\subsection{The Batyrev-Borisov construction in $\P^5$}
\label{sec:BB}

We now construct a Batyrev-Borisov mirror to a complete intersection of two cubics in $\P^5$.
We will do this by giving a nef partition of the anticanonical polytope of $\P^5$ which corresponds to a complete intersection. Then we will apply the Batyrev-Borisov construction to that nef partition, obtaining a polytope $\nabla$ corresponding to the mirror. Fix the lattice $M\cong \Z^5$ and its dual lattice $N$.
\begin{remark}Due to the way we will derive certain fans in this section via methods inspired by mirror symmetry (see $\S$ \ref{rem:LT}) our first fan lives in $M_{\R}$ and not in the conventional $N_{\R}$.
\end{remark}

Define the rays $\overline{\rho_0},\dots,\overline{\rho_{11}}$ in $M_{\R}\oplus \R^2$ with primitive generators

\begin{equation*}\begin{array}{lll}
u_{\overline{\rho_0}}=(3,0,0,-1,-1,0,1), &  & u_{\overline{\rho_6}}=(2,-1,-1,0,0,1,0),\\
u_{\overline{\rho_1}}=(0,3,0,-1,-1,0,1), & & u_{\overline{\rho_7}}=(-1,2,-1,0,0,1,0),\\
u_{\overline{\rho_2}}=(0,0,3,-1,-1,0,1), & & u_{\overline{\rho_8}}=(-1,-1,2,0,0,1,0),\\
u_{\overline{\rho_3}}=(-1,-1,-1,3,0,1,0), & & u_{\overline{\rho_9}}=(0,0,0,2,-1,0,1),\\
u_{\overline{\rho_4}}=(-1,-1,-1,0,3,1,0), & & u_{\overline{\rho_{10}}}=(0,0,0,-1,2,0,1),\\
u_{\overline{\rho_5}}=(-1,-1,-1,0,0,1,0), &  & u_{\overline{\rho_{11}}}=(0,0,0,-1,-1,0,1),\\
u_{\tau_1}=(0,0,0,0,0,1,0), & &  u_{\tau_2}=(0,0,0,0,0,0,1).\;
\end{array}\end{equation*}

\begin{notation}
\label{not:LT}
For $0\le j\le 11$, we denote by $u_{\rho_j}$ the lattice point in $M$ obtained from $u_{\overline{\rho_j}}$ by projecting onto the first 5 coordinates. Denote by $\rho_j$ the ray generated by $u_{\rho_j}$ in $M_{\R}$.
\end{notation}

\begin{proposition}
\label{Prop:fanBBmirror}
Consider the fan $\Sigma_\nabla$ with rays $\rho_0,\dots,\rho_{11}$ defined above and maximal cones listed in Table $~\ref{Tab:irrel}$ (page~\pageref{Tab:irrel}). 
Then a general complete intersection in the toric variety $X_\nabla$ corresponding to the fan $\Sigma_\nabla$ is a Batyrev-Borisov mirror to a complete intersection of two cubics in $\P^5$.
\end{proposition}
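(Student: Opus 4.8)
The plan is to verify directly that the stated fan $\Sigma_\nabla$ is (a MPCP-desingularization of) the normal fan of a polytope $\nabla$ arising from the Batyrev-Borisov dual nef partition of the anticanonical polytope $\Delta$ of $\P^5$, together with an appropriate nef partition $\Delta = \Delta_1 + \Delta_2$ into the pieces corresponding to the two cubics. First I would fix $\Delta$ to be the reflexive simplex in $M_\R$ whose vertices are the standard ones for $\P^5$ (so the lattice points of $\Delta$ are in bijection with degree-$3$ monomials on $\P^5$), and exhibit the length-$2$ nef partition $\Delta = \Delta_1 + \Delta_2$ in which $\Delta_1$ (resp. $\Delta_2$) is the convex hull of the exponent vectors of cubics in $x_0,x_1,x_2$ (resp. $x_3,x_4,x_5$) — this is the standard splitting of $\mathcal{O}(3)\otimes\mathcal{O}(3) = \mathcal{O}(-K_{\P^5})$ that corresponds to a complete intersection of two cubics. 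One checks $0\in\Delta_i$ and that each $\Delta_i$ is a lattice polytope, so this is a nef partition in the sense of the definition above.

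Next I would compute the dual nef partition $\nabla_1,\nabla_2\subseteq N_\R$ by the formula $\nabla_j = \{n : \langle m,n\rangle \ge -\delta_{ij}\ \forall m\in\Delta_i\}$, and verify that the vertices of $\nabla = \nabla_1+\nabla_2$ are exactly the lattice points $u_{\rho_0},\dots,u_{\rho_{11}}$ of Notation~\ref{not:LT} (the twelve points obtained by projecting the listed generators to the first five coordinates). Here it is cleanest to exploit the extra two coordinates: the seven-dimensional generators $u_{\overline{\rho_j}}$ together with $u_{\tau_1},u_{\tau_2}$ record simultaneously which of the two blocks each ray belongs to (the last two coordinates are $0,1$ or $1,0$), which is precisely the data of the dual nef partition as embedded via $N\hookrightarrow N\oplus\Z^2$. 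So I would first identify $\Sigma_\nabla$ with the image in $M_\R$ of a seven-dimensional cone configuration, then check that passing to the normal fan of $\nabla$ and applying an MPCP triangulation (Remark~\ref{Rem:BB}) yields exactly the simplicial fan with rays $\rho_0,\dots,\rho_{11}$ and maximal cones listed in Table~\ref{Tab:irrel}. The key points to confirm are: (i) the twelve rays $\rho_j$ are precisely the primitive generators of the one-dimensional cones of the normal fan of $\nabla$ (equivalently, the nonzero vertices of $\nabla^\vee$-related data / the lattice points on the boundary of $\nabla$ that survive the triangulation); (ii) the listed maximal cones form a regular (hence projective) triangulation into unimodular — or at least simplicial — cones, so that $X_\nabla$ is the claimed MPCP-desingularization; and (iii) $\dim(d-r) = 5-2 = 3\ge 3$ so Theorem~\ref{Thm:9.6inBB} applies and confirms the mirror (orbifold Hodge number) duality.

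Finally, having matched the combinatorics, I would invoke Theorem~\ref{Thm:9.6inBB} of Batyrev-Borisov: since $\Delta$ is the anticanonical simplex of $\P^5$, the nef partition $\Delta_1+\Delta_2$ corresponds to the complete intersection $V_\lambda$ of two cubics, and the dual data $\nabla_1,\nabla_2$ gives a family of Calabi-Yau complete intersections in (the MPCP-desingularization) $X_\nabla$ which by definition is the Batyrev-Borisov mirror, with $h^q(\Omega^1_{\widehat W}) = h^{3-q}(\Omega^1_V)$. The main obstacle I anticipate is purely bookkeeping: correctly computing $\nabla_1$ and $\nabla_2$ from the eighteen (ten per block, minus overlap) vertices of $\Delta_1,\Delta_2$, and then verifying that the explicit list of maximal cones in Table~\ref{Tab:irrel} really is a single regular triangulation refining the normal fan of $\nabla$ (as opposed to merely some fan supported on $|\Sigma_\nabla|$). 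I would handle regularity by exhibiting a strictly convex support function, or by citing that any MPCP subdivision of a reflexive polytope's normal fan exists and that the table records one such choice; unimodularity/simpliciality of each listed cone can be checked determinant by determinant. No genuinely hard step is expected — the content is that the specific fan written down is an admissible Batyrev-Borisov datum for two cubics in $\P^5$.
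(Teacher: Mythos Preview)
Your overall strategy matches the paper's approach: compute a nef partition of the anticanonical polytope of $\P^5$ corresponding to the splitting $\O(3)\otimes\O(3)$, compute the dual nef partition $\nabla_1,\nabla_2$, form $\nabla=\nabla_1+\nabla_2$, take its normal fan, and then pass to an MPCP subdivision to obtain the fan of Table~\ref{Tab:irrel}. The paper does exactly this (with a SAGE computation for the normal fan), so there is no difference in method.

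There are, however, two factual slips in your outline that would derail the computation if you followed them literally. First, the $u_{\rho_j}$ are the primitive ray generators of the \emph{normal fan} of $\nabla$, which lives in $M_\R$; they are not the vertices of $\nabla\subseteq N_\R$. The polytope $\nabla$ has fifteen vertices (the paper lists them), and its normal fan has twelve rays $\rho_0,\dots,\rho_{11}$ and fifteen (non-simplicial) maximal cones before subdivision. Your later point~(i) gets this right, but the sentence ``verify that the vertices of $\nabla$ are exactly the lattice points $u_{\rho_0},\dots,u_{\rho_{11}}$'' does not. Second, your description of $\Delta_1$ as ``the convex hull of the exponent vectors of cubics in $x_0,x_1,x_2$'' is not correct: $\Delta_1$ is the polytope attached to the divisor $T_0+T_1+T_2$, and its lattice points parametrize \emph{all} degree-$3$ monomials on $\P^5$; in the paper's coordinates its six vertices correspond to $x_0^3,\dots,x_5^3$. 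The asymmetry between $\Delta_1$ and $\Delta_2$ comes from the choice of divisor representative $T_0+T_1+T_2$ versus $T_3+T_4+T_5$, not from restricting variables.

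Finally, invoking Theorem~\ref{Thm:9.6inBB} is unnecessary here: being a Batyrev--Borisov mirror is a definition (the family of complete intersections in $X_\nabla$ cut out by sections coming from the dual nef partition), not something certified by the Hodge-number statement.
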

\begin{proof}

\noindent The anticanonical sheaf of $\P^5$ is $\O_{\P^5}(6)$, corresponding to the divisor class \[-K_{\P^5}=T_0+\dots+T_5=(T_0+T_1+T_2)+(T_3+T_4+T_5).\] 
 \noindent
The anticanonical polytope for $\P^5$ is given by \[
\Delta_{-K_{\P^5}}=\{m\in M_{\R}\vert\langle m,u_\rho\rangle\geq -1 \text{ for }\rho\in\Sigma_{\P^5}(1)\}\subseteq M_{\R},\] 
 which is the convex hull of the six points \begin{equation*}\begin{array}{lll}
       (5,-1,-1,-1,-1), & (-1,5,-1,-1,-1), & (-1,-1,5,-1,-1),  \\
       (-1,-1,-1,5,-1), & (-1,-1,-1,-1,5), & (-1,-1,-1,-1,-1).
 \end{array}
 \end{equation*}
A nef partition with respect to the origin of the polytope $\Delta_{-K_{\P^5}}$ is given by the polytopes $\Delta_1, \Delta_2$ associated to the divisors $T_0+T_1+T_2$ and $T_3+T_4+T_5$, since the Minkowski sum $\Delta_1+\Delta_2$ is equal to $\Delta_{-K_{\P^5}}$.
These polytopes are 
\begin{equation*}\begin{array}{ll}
\Delta_1=& \operatorname{Conv}((2,-1,-1,0,0),(-1,2,-1,0,0),(-1,-1,2,0,0),\\
& (-1,-1,-1,3,0),(-1,-1,-1,0,3),(-1,-1,-1,0,0)),\\
\Delta_2=&\operatorname{Conv}((0,0,0,-1,2),(0,0,0,2,-1),(0,0,3,-1,-1),\\
&(0,3,0,-1,-1),(3,0,0,-1,-1),(0,0,0,-1,-1)).
\end{array}\end{equation*}

\noindent
Next, we shall compute the dual nef partition, as defined in $\S \ref{sec:Back}$. We have:
\begin{equation*}\begin{array}{ll}
\nabla_1=&\operatorname{Conv}((1,0,0,0,0),(0,1,0,0,0),(0,0,1,0,0),(0,0,0,0,0))\\
\nabla_2=&\operatorname{Conv}((0,0,0,0,1),(0,0,0,1,0),(0,0,0,0,0),(-1,-1,-1,-1,-1)).
\end{array}\end{equation*}

\noindent Their Minkowski sum $\nabla\subseteq N_{\R}$ is then the convex hull of the 15 points 
\begin{equation*}\begin{array}{llll}
(1,0,0,0,0), & (0,1,0,0,0), & (0,0,1,0,0), & (0,0,0,1,0), \\
(0,0,0,0,1), & (1,0,0,1,0), & (1,0,0,0,1), & (0,1,0,1,0), \\
(0,1,0,0,1), & (0,0,1,1,0), & (0,0,1,0,1), & (-1,-1,-1,-1,-1), \\
(0,-1,-1,-1,-1), & (-1,0,-1,-1,-1), & (-1,-1,0,-1,-1). & \;
\end{array}\end{equation*}

\noindent
A SAGE computation shows the normal fan of $\nabla$, $\Sigma_\nabla'\subseteq M_{\R}$, has rays $\rho_0,\dots,\rho_{11}$ from Notation \ref{not:LT}.
The maximal-dimensional cones are the following 15 cones:
\begin{equation*}\begin{array}{lllllll}
\rho_0\rho_1\rho_2\rho_9\rho_{10}, & &
\rho_0\rho_1\rho_3\rho_4\rho_6\rho_7\rho_9 
\rho_{10}, & &
\rho_0\rho_2\rho_3\rho_4\rho_6\rho_8\rho_9
\rho_{10}, & &
\rho_1\rho_2\rho_3\rho_4\rho_7\rho_8\rho_9 \rho_{10},\\
\rho_1\rho_2\rho_4\rho_5\rho_7\rho_8\rho_{10}
\rho_{11}, & &
\rho_0\rho_1\rho_2\rho_9\rho_{11}, & &
\rho_0\rho_1\rho_2\rho_{10}\rho_{11}, & &
\rho_3\rho_4\rho_5\rho_6\rho_7,\\
\rho_0\rho_1\rho_3\rho_5\rho_6\rho_7\rho_9\rho_{11}, & &
 \rho_0\rho_1\rho_2\rho_5\rho_6\rho_7\rho_{10}\rho_{11},& &
\rho_3\rho_4\rho_5\rho_6\rho_8, & &
\rho_0\rho_2\rho_3\rho_5\rho_6\rho_8\rho_9\rho_{11},\\
\rho_0\rho_2\rho_4\rho_5\rho_6\rho_8\rho_{10}\rho_{11}, & &
\rho_3\rho_4\rho_5\rho_7\rho_8, & &
\rho_1\rho_2\rho_3\rho_5\rho_7\rho_8\rho_9\rho_{10}\rho_{11}. & \;
\end{array}\end{equation*}

\noindent We listed the cones by giving the rays generating them. For instance, $\rho_0\rho_1\rho_2\rho_9\rho_{10}$ stands for the cone $\cone{(\rho_0,\rho_1,\rho_2,\rho_9,\rho_{10})}$.
Note here that some of these maximal cones contain more rays than the others. So, as described in Remark \ref{Rem:BB}, we want a MPCP-resolution of the variety associated to the above fan. To do this, we subdivide each of the maximal cones which has more than 5 rays. This procedure involves choice, as each cone can be subdivided in 24 ways (being a total of $24^9$ possible choices!). However, all these choices are related by GIT, so any choice gives us a mirror family, all of which are birational.
\noindent
Following this procedure, the Table $\ref{Tab:irrel}$ (see below) gives the 42 maximal cones in the fan corresponding to a MPCP-resolution of the variety associated to the fan $\Sigma_\nabla'$. Define the fan $\Sigma_\nabla$ to be the fan consisting of those 42 5-dimensional cones and all of their faces.
Determining the variety $X_\nabla$ explicitly is not straightforward, but also not necessary for our purposes, so long as we have the fan $\Sigma_\nabla$.
\end{proof}

\begin{table}[htb!]
\[\begin{array}{|r|r|r|r|r|r|}
\hline
\rho_0\rho_1\rho_2\rho_9\rho_{10} & 
\rho_0\rho_1\rho_6\rho_9\rho_{10} &
\rho_3\rho_4\rho_6\rho_7\rho_9 & 
\rho_1\rho_6\rho_7\rho_9\rho_{10} &
\rho_4\rho_6\rho_7\rho_9\rho_{10} & 
\rho_0\rho_2\rho_6\rho_9\rho_{10} \\
\hline
\rho_2\rho_6\rho_8\rho_9\rho_{10} & 
\rho_3\rho_4\rho_6\rho_8\rho_9  &
\rho_4\rho_6\rho_8\rho_9\rho_{10} & 
\rho_1\rho_2\rho_7\rho_9\rho_{10} &
\rho_2\rho_7\rho_8\rho_9\rho_{10} & 
\rho_3\rho_4\rho_7\rho_8\rho_{10}  \\
\hline
\rho_5\rho_7\rho_8\rho_9\rho_{10} & 
\rho_1\rho_2\rho_7\rho_{10}\rho_{11} &
\rho_2\rho_7\rho_8\rho_{10}\rho_{11} & 
\rho_4\rho_5\rho_7\rho_8\rho_{10} &
\rho_5\rho_7\rho_8\rho_{10}\rho_{11} & 
\rho_0\rho_1\rho_2\rho_9\rho_{11}  \\
\hline
\rho_0\rho_1\rho_2\rho_{10}\rho_{11} & 
\rho_3\rho_4\rho_5\rho_6\rho_7 & 
\rho_0\rho_1\rho_6\rho_9\rho_{11} & 
\rho_1\rho_6\rho_7\rho_9\rho_{11}  &
\rho_3\rho_5\rho_6\rho_7\rho_9 & 
\rho_5\rho_6\rho_7\rho_9\rho_{11} \\
\hline
\rho_0\rho_1\rho_6\rho_{10}\rho_{11} & 
\rho_1\rho_6\rho_7\rho_{10}\rho_{11}  &
\rho_4\rho_5\rho_6\rho_7\rho_{10} &
\rho_5\rho_6\rho_7\rho_{10}\rho_{11}  &
\rho_3\rho_4\rho_5\rho_6\rho_8 &
\rho_0\rho_2\rho_6\rho_9\rho_{11}  \\
\hline
\rho_2\rho_6\rho_8\rho_9\rho_{11} & 
\rho_3\rho_5\rho_6\rho_8\rho_9  &
\rho_5\rho_6\rho_8\rho_9\rho_{11} & 
\rho_0\rho_2\rho_6\rho_{10}\rho_{11}  &
\rho_2\rho_6\rho_8\rho_{10}\rho_{11} & 
\rho_4\rho_5\rho_6\rho_8\rho_{10}  \\
\hline
\rho_5\rho_6\rho_8\rho_{10}\rho_{11} & 
\rho_3\rho_4\rho_5\rho_7\rho_8  &
\rho_1\rho_2\rho_7\rho_9\rho_{11} & 
\rho_2\rho_7\rho_8\rho_9\rho_{11}  &
\rho_3\rho_5\rho_7\rho_8\rho_9 & 
\rho_5\rho_7\rho_8\rho_9\rho_{11}  \\
\hline
\end{array}\]
\caption{Maximal cones of $X_\nabla$}
\label{Tab:irrel}
\end{table}

For $i=0,\dots,11$, call $D_i'$ the torus-invariant divisor on $X_\nabla$ corresponding to the ray $\rho_i$ of $\Sigma_\Delta$. 
Let $D_a'=D_0'+D_1'+D_2'+D_9'+D_{10}'+D_{11}'$ and $D_b'=D_3'+D_4'+D_5'+D_6'+D_7'+D_8'$. 

\begin{Corollary}\label{Cor:TotXnabla}
Let $\Sigma_{\nabla,D_a',D_b'}$ be the fan with rays $\overline{\rho_0},\dots,\overline{\rho_{11}},\tau_1,\tau_2$, and cones over those rays inherited from $\Sigma_\nabla$. Then $\Sigma_{\nabla,D_a',D_b'}$ is a fan corresponding to $\tot(\O_{X_\nabla}(-D_b')\oplus\O_{X_\nabla}(-D_a'))$.
\end{Corollary}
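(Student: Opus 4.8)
The plan is to apply Proposition~\ref{Prop:repeatProp7.3.1} with $\Sigma = \Sigma_\nabla$, $r = 2$, and the two Weil divisors $D_1 = -D_b'$ and $D_2 = -D_a'$, and to check that the fan $\Sigma_{\nabla,D_a',D_b'}$ described in the statement coincides on the nose with the fan $\Sigma_{\Sigma_\nabla, -D_b', -D_a'}$ produced by the recipe preceding that proposition. Recall that for $r$ divisors $D_i = \sum_\rho a_{i\rho} D_\rho$ the recipe declares, for each cone $\sigma \in \Sigma_\nabla$, the cone
\[
\sigma_{-D_b',-D_a'} = \cone\Bigl(\{\, u_\rho - a_{1\rho} e_1 - a_{2\rho} e_2 \mid \rho \in \sigma(1)\,\} \cup \{e_1, e_2\}\Bigr) \subseteq M_{\R} \oplus \R^2,
\]
where $a_{1\rho}$ is the coefficient of $D_\rho$ in $-D_b'$ and $a_{2\rho}$ the coefficient in $-D_a'$. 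Since $D_a' = D_0' + D_1' + D_2' + D_9' + D_{10}' + D_{11}'$ and $D_b' = D_3' + D_4' + D_5' + D_6' + D_7' + D_8'$, we have $a_{1\rho_j} = -1$ for $j \in \{3,4,5,6,7,8\}$ and $0$ otherwise, and $a_{2\rho_j} = -1$ for $j \in \{0,1,2,9,10,11\}$ and $0$ otherwise.

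First I would verify the generator-by-generator match. For $j \in \{0,1,2,9,10,11\}$ one has $u_{\rho_j} - a_{1\rho_j} e_1 - a_{2\rho_j} e_2 = u_{\rho_j} + e_2 = (u_{\rho_j}, 0, 1)$, and inspecting Notation~\ref{not:LT} against the displayed list of primitive generators shows this is exactly $u_{\overline{\rho_j}}$ (these are the rays whose last two coordinates are $(0,1)$). For $j \in \{3,4,5,6,7,8\}$ one has $u_{\rho_j} + e_1 = (u_{\rho_j}, 1, 0) = u_{\overline{\rho_j}}$ (the rays with last two coordinates $(1,0)$). And the two extra rays $e_1, e_2$ of the recipe are precisely $u_{\tau_1} = (0,\dots,0,1,0)$ and $u_{\tau_2} = (0,\dots,0,0,1)$. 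Hence for each maximal cone $\sigma$ of $\Sigma_\nabla$ listed in Table~\ref{Tab:irrel}, the recipe cone $\sigma_{-D_b',-D_a'}$ is the cone on the lifts $\overline{\rho_j}$ ($\rho_j \in \sigma(1)$) together with $\tau_1, \tau_2$ — which is exactly the description of the maximal cones of $\Sigma_{\nabla,D_a',D_b'}$ given in the statement ("cones over those rays inherited from $\Sigma_\nabla$"). Taking faces on both sides gives equality of fans. Then Proposition~\ref{Prop:repeatProp7.3.1} immediately yields the stack isomorphism $\mathcal{X}_{\Sigma_\nabla, -D_b', -D_a'} \cong \tot(\O_{\mathcal{X}_{\Sigma_\nabla}}(-D_b') \oplus \O_{\mathcal{X}_{\Sigma_\nabla}}(-D_a'))$, and since $\Sigma_\nabla$ is simplicial (all 42 maximal cones have exactly 5 rays, by construction of the MPCP-resolution), the stack agrees with the variety in the smooth locus; in any case the fan-level statement "$\Sigma_{\nabla,D_a',D_b'}$ is a fan corresponding to $\tot(\O_{X_\nabla}(-D_b') \oplus \O_{X_\nabla}(-D_a'))$" is exactly what Proposition~\ref{prop:7.3.1CLS} and Proposition~\ref{Prop:repeatProp7.3.1} give, applied iteratively.

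The only genuine point requiring care — the "main obstacle," though it is more bookkeeping than difficulty — is confirming that the lifted rays $u_{\overline{\rho_j}}$ in the displayed table were in fact chosen to be $(u_{\rho_j}, 1, 0)$ or $(u_{\rho_j}, 0, 1)$ with the grouping of indices matching the decomposition of $-K_{\P^5}$ into $(T_3+T_4+T_5)$ and $(T_0+T_1+T_2)$ lifted through the construction of $\S\ref{sec:BB}$; i.e. that the partition $\{3,4,5,6,7,8\} \sqcup \{0,1,2,9,10,11\}$ appearing in the definitions of $D_a'$ and $D_b'$ is the same partition that governs which lift carries $e_1$ versus $e_2$. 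This is visible directly from the table of primitive generators: the rays $\overline{\rho_0},\overline{\rho_1},\overline{\rho_2},\overline{\rho_9},\overline{\rho_{10}},\overline{\rho_{11}}$ all end in $(\dots,0,1)$ and the rays $\overline{\rho_3},\dots,\overline{\rho_8}$ all end in $(\dots,1,0)$, which after projecting away the last two coordinates (Notation~\ref{not:LT}) and re-lifting via the $\tot$-recipe reproduces them. I would state this verification as a short lemma or simply as the body of the proof, cite Proposition~\ref{Prop:repeatProp7.3.1}, and conclude.
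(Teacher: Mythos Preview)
Your proposal is correct and follows the same approach as the paper: the paper's proof is a single sentence citing Proposition~\ref{prop:7.3.1CLS} (applied twice) via Proposition~\ref{Prop:repeatProp7.3.1}, and you have simply spelled out the ray-by-ray bookkeeping that this citation leaves implicit. The verification you propose is exactly what underlies the paper's one-line proof, so there is no substantive difference.
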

\begin{proof}
Apply Proposition \ref{prop:7.3.1CLS} twice to get the result (recalling that we can do this by Proposition \ref{Prop:repeatProp7.3.1}).
\end{proof}

\subsection{Libgober and Teitelbaum's Mirror}\label{sec:LTintro}
We now recall the  family Libgober and Teitelbaum give as a mirror to the generic complete intersection of 
two cubics in $\P^5$. To start, define $V_\lambda\subseteq \P^5$ to be the vanishing set of the following two polynomials:
\begin{equation}
\label{eq:DefQi}
Q_{1,\lambda}=x_0^3+x_1^3+x_2^3-3\lambda x_3x_4x_5,\quad \quad
Q_{2,\lambda}=x_3^3+x_4^3+x_5^3-3\lambda x_0x_1x_2.
\end{equation}

For generic $\lambda$, this gives a smooth complete intersection in $\P^5$ which is a Calabi-Yau threefold.

Let $\zeta_n$ denote a primitive $n$-th root of unity. Let $\alpha,\beta,\delta,\epsilon\in\Z\pmod 3$ and $\mu\in\Z\pmod 9$ with 
$3\mu=\alpha+\beta=\delta+\epsilon$. Define the diagonal matrix \begin{eqnarray*}
&& g_{\alpha,\beta,\delta,\epsilon,\mu}:= \operatorname{diag}\left( \zeta_3^\alpha\zeta_9^\mu,\,\, \zeta_3^\beta\zeta_9^\mu,\,\, \zeta_9^\mu,\,\, \zeta_3^{-\delta}\zeta_9^{-\mu},\,\, \zeta_3^{-\epsilon}\zeta_9^{-\mu},\,\, \zeta_9^{-\mu}\right)\;
\end{eqnarray*}
\noindent and let $G_{81}\subset PGL(5,\C)$ denote the order 81 group generated by the $g_{\alpha,\beta,\delta,\epsilon,\mu}$. Note that $G_{81}$ acts on $\P^5$ by restricting the natural action of $PGL(5,\C)$ on $\P^5$. The polynomials $Q_{1,\lambda}, Q_{2,\lambda}$ are invariant with respect to the action of $G_{81}$, hence $G_{81}$ acts on $V_{\lambda}$.

Note that $G_{81}$ is of isomorphism type $(\Z/3\Z)^2\times (\Z/9\Z)$ and can be generated by $(\zeta_3,\zeta_3^{-1},1,1,1,1)$, $(1,1,1,\zeta_3^{-1},\zeta_3,1)$ and $(\zeta_9,\zeta_9^4,\zeta_9,\zeta_9^{-1},\zeta_9^{-4},\zeta_9^{-1})$.

 Let $V_{LT,\lambda}$ be the quotient of $V_\lambda$ by the action of $G_{81}$ and let $W_{LT,\lambda}$ be a minimal resolution of singularities of $V_{LT,\lambda}$ which is a Calabi-Yau manifold. 

\subsection{Expressing Libgober-Teitelbaum torically}\label{sec:LTexp}
\noindent In the following, we aim to give a toric description of $V_{LT,\lambda}$. First we give a fan for the toric variety $X_{LT}:=\P^5/G_{81}$ and then employ methods of $\S 7.3$ of \cite{CLS} to construct a vector bundle over $X_{LT}$ that has the global section $Q_{1,\lambda}\oplus Q_{2,\lambda}$. 


\begin{proposition}
\label{Prop:FanforP5G81} 
Consider the 1-dimensional cones $\rho_0,\dots,\rho_5$ with corresponding primitive generators

\begin{equation*}\begin{array}{llllll}
     u_{\rho_0}&=(3,0,0,-1,-1), & u_{\rho_1}&=(0,3,0,-1,-1), & u_{\rho_2}&=(0,0,3,-1,-1), \\
    u_{\rho_3}&=(-1,-1,-1,3,0), &
    u_{\rho_4}&=(-1,-1,-1,0,3), &
    u_{\rho_5}&=(-1,-1,-1,0,0). \;
\end{array}\end{equation*}

\noindent Consider the collection $\mathcal{C}$ of sets of the form \[
\{\rho_i \,\, \vert \,\,  i\in I, I\subseteq\{0,\dots,5\}, |I|=5\}.
\]
Let $\Sigma_{LT}\subseteq M_{\R}$ be the fan consisting of maximal cones \[\{\cone(C)\vert C\in\mathcal{C}\}\] and all their faces.
    
Then the toric stack associated to $\Sigma_{LT}$ is the stack corresponding to the Libgober-Teitelbaum construction, $\mathcal{X}_{LT}=[\C^6\setminus\{0\}/\left(\C^\ast\times G_{81}\right)]$, with $\C^\ast$ acting by $(\lambda x_0,\dots,\lambda x_5)\sim (x_0,\dots,x_5)$ and $G_{81}$ acting as described above in $\S$ \ref{sec:LTintro}.
\end{proposition}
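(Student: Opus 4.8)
The plan is to identify the toric stack $\mathcal{X}_{\Sigma_{LT}}$ built from the Cox construction with the quotient stack $[\C^6\setminus\{0\}/(\C^\ast\times G_{81})]$ by computing both the irrelevant ideal and the structure group associated to $\Sigma_{LT}$, and checking they match the data on the right-hand side. First I would compute the irrelevant ideal $B(\Sigma_{LT})$. Since the maximal cones are exactly the $\cone(C)$ for $C$ ranging over the $5$-element subsets of $\{\rho_0,\dots,\rho_5\}$, the monomial $x^{\hat\sigma}$ for such a cone is the single variable $x_i$ with $i\notin I$. Hence $B(\Sigma_{LT})=\langle x_0,\dots,x_5\rangle$, so $Z(\Sigma_{LT})=\{0\}$ and the Cox open set is $U_{\Sigma_{LT}}=\C^6\setminus\{0\}$, matching the claimed open subset.

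Next I would compute $G_{\Sigma_{LT}}$ using Lemma~\ref{Lem:5.1.1CLS}. Writing out the $5\times 6$ matrix whose columns are the $u_{\rho_i}$ and using the standard basis $e_1,\dots,e_5$ of $M$, the group is
\[
G_{\Sigma_{LT}}=\left\{(t_0,\dots,t_5)\in(\C^\ast)^6 \ \Big\vert\ \prod_{j=0}^5 t_j^{\langle e_i,u_{\rho_j}\rangle}=1\ \text{ for } 1\le i\le 5\right\}.
\]
The five relations read $t_0^3 = t_3 t_4 t_5$, $t_1^3 = t_3 t_4 t_5$, $t_2^3 = t_3 t_4 t_5$ (from the first three coordinates) and $t_3^3 = t_0 t_1 t_2$, $t_4^3 = t_0 t_1 t_2$ (from the fourth and fifth), using $\langle e_i, u_{\rho_j}\rangle$ read off the generators. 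I would then show this subgroup of $(\C^\ast)^6$ is exactly the image of $\C^\ast\times G_{81}$ under the embedding into $(\C^\ast)^6$ given by the scaling action $\lambda\mapsto(\lambda,\dots,\lambda)$ together with the diagonal matrices $g_{\alpha,\beta,\delta,\epsilon,\mu}$ listed in \textsection\ref{sec:LTintro}. Concretely: the subtorus $\{(\lambda,\dots,\lambda)\}$ clearly satisfies all five relations ($\lambda^3=\lambda^3$), and one checks directly that each of the three generators $(\zeta_3,\zeta_3^{-1},1,1,1,1)$, $(1,1,1,\zeta_3^{-1},\zeta_3,1)$, $(\zeta_9,\zeta_9^4,\zeta_9,\zeta_9^{-1},\zeta_9^{-4},\zeta_9^{-1})$ of $G_{81}$ satisfies the relations as well. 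For the reverse containment, I would argue by dimension and component count: the solution set of the five equations is a $1$-dimensional subgroup of $(\C^\ast)^6$ (the equations cut down rank by $5$ generically), its identity component is the diagonal $\C^\ast$, and the component group is a finite abelian group whose order I would pin down to be $81$ by solving the equations modulo the diagonal — showing the group of components is generated by the images of the three $G_{81}$-generators and has the isomorphism type $(\Z/3)^2\times\Z/9$ recorded in the text.

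The main obstacle is the reverse containment $G_{\Sigma_{LT}}\subseteq \C^\ast\times G_{81}$: this requires an honest count of the solutions of the monomial equations modulo the diagonal $\C^\ast$, i.e. showing $|\pi_0(G_{\Sigma_{LT}})|=81$ and not larger. I would do this by choosing coordinates: set $s=t_3t_4t_5=t_0^3=t_1^3=t_2^3$, so $t_0,t_1,t_2$ are cube roots of $s$ differing by cube roots of unity, and similarly $t_3,t_4,t_5$; then imposing the remaining relations $t_3^3=t_0t_1t_2$, $t_4^3=t_0t_1t_2$ cuts the free cube-root ambiguities down, and a direct finite computation (working in the quotient by the diagonal, which can be normalized by scaling) yields exactly $81$ components with the stated group structure. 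Once both the open set and the group are matched, Theorem~\ref{Thm:FK18Stacks} (the fan being simplicial, as each maximal cone has exactly $5=\dim$ rays) identifies $\mathcal{X}_{\Sigma_{LT}}=[U_{\Sigma_{LT}}/G_{\Sigma_{LT}}]$ with the Cox stack, completing the proof that it is the Libgober-Teitelbaum stack $[\C^6\setminus\{0\}/(\C^\ast\times G_{81})]$.
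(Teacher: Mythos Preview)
Your proposal is correct and follows essentially the same route as the paper: compute the Cox open set via the irrelevant ideal, write down the five monomial equations for $G_{\Sigma_{LT}}$ from Lemma~\ref{Lem:5.1.1CLS}, and identify the solution group with $\C^\ast\times G_{81}$ by working modulo the diagonal torus. The only cosmetic difference is that the paper solves the equations directly (normalising by $\prod t_i=1$ and scaling by a sixth root of unity to pin down $t_2=\zeta_9^\mu$, $t_5=\zeta_9^{-\mu}$), whereas you first verify one inclusion on generators and then count components --- but the ``honest count'' you flag as the main obstacle is exactly the computation the paper carries out, and your sketch of it is accurate.
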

\begin{proof}

  We use the Cox construction described in $\S \ref{coxconstruction}$. 
 By Lemma $\ref{Lem:5.1.1CLS}$, we obtain the following system of equations characterising elements of $G:= G_\Sigma$
\begin{eqnarray}
&& t_3t_4t_5=t_0^3\label{eqn:I}\\
&& t_3t_4t_5=t_1^3\label{eqn:II}\\
&& t_3t_4t_5=t_2^3\label{eqn:III}\\
&& t_0t_1t_2=t_3^3\label{eqn:IV}\\
&& t_0t_1t_2=t_4^3\label{eqn:V}\;
\end{eqnarray}

First we note that we have a copy of $\C^\ast$ in $G$, given by $\{t\cdot(1,1,1,1,1,1)\ \vert\ t\in\C^\ast\}$, so to compute $G$ we consider the group $H$ of cosets of $\C^\ast$. We will explicitly describe $H$ and subsequently use the direct product theorem to compute $G$.
Consider an element $(t_0,\dots,t_5)\in G$. By an appropriate choice of coset representative of $(t_0,t_1,t_2,t_3,t_4,t_5)\cdot \C^\ast$, we may assume $\prod_{i=0}^5 t_i=1$.

Using equations \eqref{eqn:I}, \eqref{eqn:II} and \eqref{eqn:III}, we have $t_0^3=t_1^3=t_2^3$, and thus $t_0=\zeta_3^\alpha t_2, t_1=\zeta_3^\beta t_2$ for some $\alpha,\beta\in\Z_3$.
Using equations $(\ref{eqn:I})-(\ref{eqn:V})$, we have that $t_3^3t_4^3t_5^3=t_0^3t_1^3t_2^3=t_3^6t_4^3=t_3^3t_4^6$, which implies \begin{equation}\label{eqn:VI}t_5^3=t_3^3=t_4^3.\end{equation} Hence, similarly to above, we obtain $t_3=\zeta_3^{-\delta}t_5, t_4=\zeta_3^{-\varepsilon}t_5$ for some $\delta,\varepsilon\in\Z_3$.

\noindent By combining $(\ref{eqn:III})$, $(\ref{eqn:IV})$ and $(\ref{eqn:VI})$ we obtain \begin{equation}\label{eqn:t3inverse} t_2^3t_5^3=t_0t_1t_2t_3t_4t_5=1.
\end{equation} 
Equation \eqref{eqn:t3inverse} implies $t_5^3=(t_2^{-1})^3$, thus $t_5=\zeta_3^{\nu}\cdot t_2^{-1}$ for some $\nu\in\Z_3$. Using $t_3=\zeta_3^{-\delta}t_5$ and $t_4=\zeta_3^{-\varepsilon}t_5$ and equation $(\ref{eqn:III})$, we obtain
$$
t_2^3=t_3t_4t_5 = t_5^3\zeta_3^{-(\delta+\varepsilon)}=t_2^{-3}\zeta_3^{-(\delta+\varepsilon)}.
$$
Hence $t_2^{18}=1$. So we can write $t_2=\zeta_{18}^l$ for some $l\in\Z_{18}$. 

 We now claim that $t_2$ can be assumed to be a ninth root of unity and $t_5$ to be its inverse, i.e. $t_2=\zeta_9^\mu, t_5=\zeta_9^{-\mu}$ for some $\mu\in\Z_9$. Indeed, note that $(\zeta_6,\dots,\zeta_6)\in (1,1,1,1,1,1)\cdot\C^\ast\subseteq G$, so we can scale an element $(t_0,\dots,t_5)\in G$ by sixth roots of unity, leaving the product $\prod_{i=1}^6 t_i$ invariant. The claim follows by multiplication with an appropriate sixth root of unity.

 Expressing all the $t_i$ in terms of $t_2$, the assumption $1=\prod_{i=1}^6t_i$ implies $1=\zeta_3^{\alpha+\beta-\delta-\varepsilon}$, or, equivalently, \[\alpha+\beta=\delta+\varepsilon\pmod 3.\] Finally, using $(\ref{eqn:III})$ gives $\zeta_9^{3\mu}=\zeta_3^{-\delta+\varepsilon}\zeta_9^{-3\mu}$ and therefore $\zeta_9^{3\mu}=\zeta_3^{\delta+\varepsilon}$. Thus $H$, the group of cosets of $\C^\ast$, is isomorphic to $G_{81}$, where $G_{81}$ is the same group described in $\S\ref{sec:Back}$. In particular, all elements of $G$ are of the form $g\cdot \lambda$ with $g\in G_{81},\ \lambda\in (1,1,1,1,1,1)\cdot\C^\ast$ and $G_{81}\cap\{(1,1,1,1,1,1)\cdot\lambda\vert\lambda\in\C^\ast\}=\{(1,1,1,1,1,1)\}$.
 Hence, by the direct product theorem, $G\cong \C^\ast\times G_{81}$.

The Cox fan of $\Sigma_{LT}$ can be described as follows. It has six rays $e_{\rho_0},\dots,e_{\rho_5}$. It is straightforward to see that the maximal cones are all 5-dimensional cones generated by any 5 of the rays above. 
Therefore, we obtain $U_{\Sigma_{LT}}=\mathbb{A}^6\setminus\{0\}$. Thus, the Cox stack associated to $\Sigma_{LT}$ is 
\[
\mathcal{X}_{LT}=[U_{\Sigma_{LT}}/ G]=[\C^6\setminus\{0\}/\left(\C^\ast\times G_{81}\right)],
\]
with the prescribed action, as required.
\end{proof}

\begin{remark} We note that by Theorem $\ref{Thm:FK18Stacks}$ the coarse moduli space of the stack $\mathcal{X}_{LT}$ is $X_{LT}$, since $\Sigma_{LT}$ is simplicial.
\end{remark}

Starting with the fan $\Sigma_{LT}$ of $X_{LT}$, we apply Proposition \ref{prop:7.3.1CLS} twice to construct a vector bundle. Let $D_i$ be the Weil divisor corresponding to the ray $\rho_i$ in $\Sigma_{LT}$. Let $D_a=D_0+D_1+D_2$ and $D_b=D_3+D_4+D_5$.

\begin{Corollary}
\label{Lem:totalspaceLT}
Denote by the rays $\overline{\rho_0},\dots,\overline{\rho_5}$, $\tau_1$ and $\tau_2$ the rays\footnote{These are the same as on page~\pageref{not:LT}.} generated by the primitive generators:
\begin{equation*}\begin{array}{lll}
u_{\overline{\rho_0}}=(3,0,0,-1,-1,0,1),     & u_{\overline{\rho_1}}=(0,3,0,-1,-1,0,1),  &
u_{\overline{\rho_2}}=(0,0,3,-1,-1,0,1),    \\ u_{\overline{\rho_3}}=(-1,-1,-1,3,0,1,0), &
u_{\overline{\rho_4}}=(-1,-1,-1,0,3,1,0),  & u_{\overline{\rho_5}}=(-1,-1,-1,0,0,1,0), \\
u_{\tau_1}=(0,0,0,0,0,1,0), & u_{\tau_2}=(0,0,0,0,0,0,1). & \;
\end{array}\end{equation*}
Consider the collection $\mathcal{S}$ of sets of the form \[
\{\overline{\rho_i} \,\, \vert \,\,  i\in I, I\subseteq\{0,\dots,5\}, |I|=5\}\cup\{\tau_1,\tau_2\}.
\]
Let $\Sigma_{LT,D_a,D_b}$ be the fan in $M_{\R}\oplus \R^2$ consisting of the maximal cones \[\{\cone(S)\vert S\in\mathcal{S}\}\] and all their faces. Then:
 \begin{enumerate}[(a)]
     \item $\Sigma_{LT,D_a,D_b}$ is a fan corresponding to $\tot(\O_{X_{LT}}(-D_b)\oplus\O_{X_{LT}}(-D_a))$;
    \item The vector bundle $\O_{X_{LT}}(D_b)\oplus\O_{X_{LT}}(D_a)$ has the global section $Q_{1,\lambda}\oplus Q_{2,\lambda}$.
 \end{enumerate}
\end{Corollary}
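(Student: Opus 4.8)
For part (a), the plan is to apply Proposition~\ref{prop:7.3.1CLS} twice, exactly as in Corollary~\ref{Cor:TotXnabla}, justified by Proposition~\ref{Prop:repeatProp7.3.1}. Concretely, starting from the fan $\Sigma_{LT}$ in $M_{\R}$ with rays $u_{\rho_0},\dots,u_{\rho_5}$ (Proposition~\ref{Prop:FanforP5G81}), I first form the total space of $\O_{X_{LT}}(-D_b)$, where $D_b = D_3+D_4+D_5$, by the recipe for $\sigma_{D_1,\dots,D_r}$ preceding Proposition~\ref{Prop:repeatProp7.3.1}: each ray $u_{\rho_i}$ gets modified by subtracting the appropriate multiple of the new basis vector. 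Here $-D_b = -D_3-D_4-D_5$ has coefficients $a_\rho$ equal to $-1$ on $\rho_3,\rho_4,\rho_5$ and $0$ otherwise, while $-D_a = -D_0-D_1-D_2$ has coefficients $-1$ on $\rho_0,\rho_1,\rho_2$ and $0$ otherwise. Tracking this through both applications, the $i$-th ray $u_{\rho_i}$ becomes $u_{\overline{\rho_i}}$ by appending two coordinates: the sixth coordinate records (minus) the $D_b$-coefficient, which is $1$ for $i\in\{3,4,5\}$ and $0$ otherwise, and the seventh records (minus) the $D_a$-coefficient, which is $1$ for $i\in\{0,1,2\}$ and $0$ otherwise. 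The new rays $e_1,e_2$ become $\tau_1=(0,0,0,0,0,1,0)$ and $\tau_2=(0,0,0,0,0,0,1)$. One then checks that the maximal cones $\sigma_{D_a,D_b}$ obtained this way are precisely $\cone(\overline{\rho_i}\mid i\in I)\cup\{\tau_1,\tau_2\}$ for $|I|=5$, i.e.\ exactly the cones listed in $\mathcal{S}$, so $\Sigma_{LT,D_a,D_b}$ is the fan produced by the double application of Proposition~\ref{prop:7.3.1CLS}, which therefore corresponds to $\tot(\O_{X_{LT}}(-D_b)\oplus\O_{X_{LT}}(-D_a))$.

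For part (b), the plan is to use the standard description of global sections of a line bundle on a toric variety in terms of lattice points of the associated polytope. The divisor $D_a = D_0+D_1+D_2$ on $X_{LT}=\P^5/G_{81}$ has polytope $P_{D_a} = \{m\in M_{\R}\mid \langle m, u_{\rho_i}\rangle \ge -a_i\}$ with $a_i = 1$ for $i\in\{0,1,2\}$ and $a_i=0$ for $i\in\{3,4,5\}$; its lattice points index a basis of $H^0(X_{LT},\O(D_a))$ via the characters $\chi^m$, equivalently the $G_{81}$-invariant monomials in the Cox ring $\C[x_0,\dots,x_5]$ of the appropriate degree. I would verify that the monomials $x_0^3, x_1^3, x_2^3, x_3x_4x_5$ (the terms of $Q_{2,\lambda}$) all lie in the correct degree-piece and are $G_{81}$-invariant — the $G_{81}$-invariance is already recorded in \S\ref{sec:LTintro}, since $Q_{1,\lambda}, Q_{2,\lambda}$ are stated to be $G_{81}$-invariant. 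Symmetrically, $x_3^3, x_4^3, x_5^3, x_0x_1x_2$ (the terms of $Q_{1,\lambda}$) give a section of $\O(D_b)$. Hence $Q_{2,\lambda}\in H^0(X_{LT},\O(D_a))$ and $Q_{1,\lambda}\in H^0(X_{LT},\O(D_b))$, so $Q_{1,\lambda}\oplus Q_{2,\lambda}$ is a global section of $\O(D_b)\oplus\O(D_a)$.

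I expect the main obstacle to be the bookkeeping in part (a): getting the sign conventions straight between $D_a$ and $-D_a$ (the total space of $\O(D)$ is built from the divisor $D$ but the cone recipe inserts $-a_\rho$), and confirming that the subdivision/cone combinatorics of $\mathcal{S}$ matches what Proposition~\ref{Prop:repeatProp7.3.1} outputs rather than merely being compatible with it. A secondary subtlety is making sure the order of the two applications (which of $D_a$, $D_b$ is added first) is consistent with the order of the summands $\O(-D_b)\oplus\O(-D_a)$ in the statement — but since the two constructions commute up to reindexing the extra coordinates, this is harmless. Part (b) should be essentially immediate from the $G_{81}$-invariance of the $Q_{i,\lambda}$ together with a degree count on monomials, so I would keep it brief.
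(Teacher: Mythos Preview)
Your proposal is correct and follows essentially the same approach as the paper: part (a) is exactly the double application of Proposition~\ref{prop:7.3.1CLS} (the paper says nothing more than this), and part (b) is the same graded-piece check, which the paper phrases via explicit linear equivalences $3D_i\sim D_b$ for $i\in\{0,1,2\}$ and $3D_i\sim D_a$ for $i\in\{3,4,5\}$ rather than via the polytope description. One bookkeeping slip to fix: you have the monomials of $Q_{1,\lambda}$ and $Q_{2,\lambda}$ swapped (it is $Q_{1,\lambda}=x_0^3+x_1^3+x_2^3-3\lambda x_3x_4x_5$ that lies in $\Gamma(\O(D_b))$, and $Q_{2,\lambda}$ in $\Gamma(\O(D_a))$), though your stated conclusion $Q_{1,\lambda}\in H^0(\O(D_b))$, $Q_{2,\lambda}\in H^0(\O(D_a))$ is correct.
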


\begin{proof}  Applying Proposition $\ref{prop:7.3.1CLS}$ twice yields (a). 

We now turn to (b) and show that $Q_{1,\lambda}\in\Gamma(X_{LT},\O_{X_{LT}}(D_b))$ and $ Q_{2,\lambda}\in\Gamma(X_{LT},\O_{X_{LT}}(D_a))$. 
We start by noting that on $X_{LT}$ we have $\div(x_i^3)=3D_i$, so $\div(x_i^3)-3D_i\geq 0$, i.e. $x_i^3\in\Gamma(X_{LT},\O_{X_{LT}}(3D_i))$. Similarly, $x_0x_1x_2\in\Gamma(X_{LT},\O_{X_{LT}}(D_a))$ and $x_3x_4x_5\in\Gamma(X_{LT},\O_{X_{LT}}(D_b))$.

To show the linear equivalence of two divisors, it suffices to consider their difference and show it is principal. We recall that $\div(\chi^n)=\sum_{\rho\in\Sigma(1)} \langle u_\rho,n\rangle D_\rho$, corresponding to the map $\iota$ in the exact sequence \eqref{eq:Divisor}. So, for instance $3D_1-3D_0=\div(x_0^{-3}x_1^3)$ which is the character associated to the lattice point $(-1,1,0,0,0)$. Hence $3D_1-3D_0=0$ in $\operatorname{Cl}(X_{LT})$, i.e. $3D_0\sim 3D_1$. Similarly $3D_1\sim 3D_2$ and $3D_3\sim3D_4\sim3D_5$. Using the lattice points $(-1,0,0,0,0)$ and $(0,0,0,-1,0)$ respectively, we also see that $3D_0\sim D_b$ and $3D_3\sim D_a$.

Thus \[\O_{X_{LT}}(3D_0)\simeq\O_{X_{LT}}(3D_1)\simeq\O_{X_{LT}}(3D_2)\simeq\O_{X_{LT}}(D_b)\] and 
\[\O_{X_{LT}}(3D_3)\simeq\O_{X_{LT}}(3D_4)\simeq\O_{X_{LT}}(3D_5)\simeq\O_{X_{LT}}(D_a),\] implying $Q_{2,\lambda}\in\Gamma(X_{LT},\O_{X_{LT}}(D_a))$ and $Q_{1,\lambda}\in\Gamma(X_{LT},\O_{X_{LT}}(D_b))$, as required.
\end{proof}

\subsubsection{Intuition for constructing $X_{LT}$ torically}
\label{rem:LT}

We now explain how we found an explicit description for the fan $\Sigma_{LT}$.
We start by considering the standard fan $\Sigma_{\P^5}\subseteq N_{\R}$ for $\P^5$ in the standard basis. It is the fan consisting of the cones generated by any proper subset of the six rays $\nu_0,\dots,\nu_5 $ with primitive generators
\begin{equation*}\begin{array}{lllll}
u_{\nu_0}=(1,0,0,0,0), & & u_{\nu_1}=(0,1,0,0,0),& & u_{\nu_2}=(0,0,1,0,0),\\  u_{\nu_3}=(0,0,0,1,0),& & u_{\nu_4}=(0,0,0,0,1), & & u_{\nu_5}=(-1,-1,-1,-1,-1).\;
\end{array}\end{equation*}

\noindent
Denote by $T_0,\dots,T_5$ the six primitive Weil divisors corresponding to the rays $u_{\nu_0},\dots,u_{\nu_5}$ respectively. Then \[\O(-\underbrace{(T_0+T_1+T_2)}_{:=T_a})=\O(-\underbrace{(T_3+T_4+T_5)}_{:=T_b})=\O(-3),\] and we can use the methods of $\S 7.3$ of \cite{CLS} again to construct a fan of $\tot(\O_{\P^5}(-3)\oplus\O_{\P^5}(-3))$. This yields the fan $\Sigma_{\P^5,T_a,T_b}$ in $N_{\R}\oplus\R^2$ with the 8 rays $\overline{\nu_0},\dots,\overline{\nu_5},\tau_1$ and $\tau_2$ having primitive ray generators 
\begin{equation}\label{the nus and taus}\begin{array}{lll}
u_{\overline{\nu_0}}=(1,0,0,0,0,1,0), & & u_{\overline{\nu_4}}=(0,0,0,0,1,0,1),\\
u_{\overline{\nu_1}}=(0,1,0,0,0,1,0), & &
u_{\overline{\nu_5}}=(-1,-1,-1,-1,-1,0,1),\\ u_{\overline{\nu_2}}=(0,0,1,0,0,1,0), & & u_{\tau_1}=(0,0,0,0,0,1,0),\\ u_{\overline{\nu_3}}=(0,0,0,1,0,0,1), & & 
u_{\tau_2}=(0,0,0,0,0,0,1).
\end{array}\end{equation}

\noindent The fan $\Sigma_{\P^5,T_a,T_b}$ is the star subdivision of $\cone(u_{\overline{\nu_0}},\dots,u_{\overline{\nu_5}},u_{\tau_1},u_{\tau_2})$ along $u_{\tau_1}$ and $u_{\tau_2}$ (noting the abuse of notation by which $u_{\tau_i}$ represent the same vector in both lattices $M,N$). The dual cone to $\Sigma_{\P^5,T_a,T_b}$ in $M_{\R}\oplus\R^2$ is spanned by the 12 rays $\overline{\rho_0},\dots,\overline{\rho_{11}}$ defined in $\S$ \ref{sec:BB} (page~\pageref{sec:BB}).

We recall that each lattice point in the interior of the dual cone corresponds to a global function of $X_{\Sigma_{\P^5},T_a,T_b}$ by associating $m$ to the monomial \[x^m:=\prod_{\rho\in\Sigma_{{\P}^5,T_1,T_2}(1)}x_\rho^{\langle m,u_\rho\rangle}.\]

Now a section $s_1\oplus s_2\in\Gamma(\P^5,\O(3)\oplus\O(3))$ will correspond to a global function on $\tot\left(\O(-3)\oplus\O(-3)\right)$ of the form $u_1s_1+u_2s_2$, where $u_i$ is the variable corresponding to $u_{\tau_i}$. Recalling the polynomials $Q_i$ from $\eqref{eq:DefQi}$ in $\S\ref{sec:LTintro}$, we would like to express the global function $F:=u_2Q_{1,\lambda}+u_1Q_{2,\lambda}$ as a linear combination of global functions of the form $x^m$. We do this by finding the lattice points in the dual cone corresponding to each monomial in $F$.

By splitting it up into its monomials, $u_2Q_{1,\lambda}$ corresponds to the 4 points $(3,0,0,-1,-1,0,1)$, $(0,3,0,-1,-1,0,1)$, $(0,0,3,-1,-1,0,1)$ and $(0,0,0,0,0,0,1)$.

\noindent Similarly, $u_1Q_{2,\lambda}$ corresponds to the points $(-1,-1,-1,3,0,1,0),$  $(-1,-1,-1,0,3,1,0)$, $(-1,-1,-1,0,0,1,0)$ and $(0,0,0,0,0,1,0)$.

We find that these 8 points are the primitive generators for the rays of $\Sigma_{LT,D_a,D_b}$ (see Corollary \ref{Lem:totalspaceLT}).

Quotienting $M_{\R}\oplus\R^2$ by the rays associated to the bundle coordinates (i.e. the lattice points that are the elements of the dual basis dual to $u_{\tau_1}$ and $u_{\tau_2}$) corresponds to a toric morphism $X_{\Sigma_{LT,D_a,D_b}}\rightarrow X_{\Sigma_{LT}}$. 
We emphasize that the dual cone to $\cone(\Sigma_{\P^5,T_a,T_b}(1))$ is given by
 $\operatorname{Conv}(u_{\overline{\rho_0}},\dots,u_{\overline{\rho_{11}}})$. Here, we take a subcone generated by a subset of $\{u_{\overline{\rho_0}},\dots,u_{\overline{\rho_{11}}}\}$.

\subsubsection{Expressing the zerolocus of $Q_{1,\lambda},Q_{2,\lambda}$}\label{sec:zerolocusLT}

We remark that the cone $|\Sigma_{LT, D_a, D_b}|$ is not a reflexive Gorenstein cone, hence the Batyrev-Borisov construction does not apply to it. 

The variety $V_{LT,\lambda}\subseteq X_{LT}$ is the zero-locus of the polynomials $Q_{1,\lambda},Q_{2,\lambda}$, where $Q_{1,\lambda}\oplus Q_{2,\lambda}$ is a section of the vector bundle constructed above in Corollary $\ref{Lem:totalspaceLT}$. 
 Proceeding in the same way as in \textsection\ref{rem:LT}, we consider lattice points on the cone $|\Sigma_{LT,D_a,D_b}|^\vee\subseteq N_{\R}\oplus\R^2$ to get global functions of $\mathcal{X}_{\Sigma_{LT,D_a,D_b}}$. The cone $|\Sigma_{LT,D_a,D_b}|^\vee$ is the cone over the convex hull of the following 12 points:
\begin{equation*}\begin{array}{llll}
  (1,0,0,0,0,1,0), &
 ( 0,  1,  0,  0,  0, 1, 0), & ( 0,  0,  1,  0,  0, 1, 0),\\  ( 0,  0,  0,  1,  0, 0, 1), & ( 0,  0,  0,  0,  1, 0, 1)  &
( 2, -1, -1,  0,  0, 0, 3),\\ (-1,  2, -1,  0,  0, 0, 3), & (-1, -1,  2,  0,  0, 0, 3), & (1,  1,  1,  3,  0, 3, 0),\\  ( 1,  1,  1,  0,  3, 3, 0), &(-1, -1, -1, -1, -1, 0, 1), & (-2, -2, -2, -3, -3, 3, 0).\;
\end{array}\end{equation*}

The points corresponding to the monomials in $u_1Q_{1,\lambda}+u_2Q_{2,\lambda}$, and hence to the section $Q_{1,\lambda}\oplus Q_{2,\lambda}$, are  the lattice points $u_{\overline{\nu_i}}$ and $u_{\tau_i}$ in~\eqref{the nus and taus}.
Later on, describing $V_{LT}$ by these 8 points will allow us to work with $\dbcoh{V_{LT}}$, using results in \cite{FK19}.

\begin{remark}
In their recent work \cite{Rossi20,Rossi21}, Rossi proposes a generalisation of the Batyrev-Borisov mirror construction, called \newterm{framed duality}  (\newterm{f-duality}). $f$-duality provides gives an algorithm to obtain mirror candidates of hypersurfaces and complete intersections in toric varieties. 
Applying $f$-duality to $V_{LT}\subset\P^5/G_{81}$ produces $V_\lambda\subset \P^5$, which in turn gives the same mirror as the Batyrev-Borisov construction when applying $f$-duality to it. Theorem \ref{Thm:MyThmInIntro} suggests that different mirror candidates obtained via $f$-duality may be derived equivalent and prompts the question under what conditions this is the case.
\end{remark}

\section{A derived equivalence between the constructions by Libgober-Teitelbaum and Batyrev-Borisov}\label{sec:linking}

Here we will prove the main result, Theorem \ref{Thm:MyThmInIntro}.

\subsection{Picking a partial compactification}
Looking at the dual of the fan $\Sigma_{LT,D_a,D_b}$ as in Corollary \ref{Lem:totalspaceLT}, we recall from $\S\ref{sec:zerolocusLT}$ that the global function $u_1Q_{1,\lambda}+u_2Q_{2,\lambda}$ corresponds to the points

\begin{equation*}\begin{array}{lll}( 1,  0,  0,  0,  0, 1, 0),& ( 0,  1,  0,  0,  0, 1, 0), & ( 0,  0,  1,  0,  0, 1, 0)\\ ( 0,  0,  0,  1,  0, 0, 1),& ( 0,  0,  0,  0,  1, 0, 1),& (-1, -1, -1, -1, -1, 0, 1),\\  ( 0,  0,  0,  0,  0, 0, 1),& ( 0,  0,  0,  0,  0, 1, 0),& \\\end{array}\end{equation*}

Consider the GKZ fan of $\tot(\O_{X_\nabla}(-D_b')\oplus\O_{X_\nabla}(-D_a'))$. We note that the chambers of this GKZ fan correspond to regular triangulations of the polytope $\mathfrak{P}=\operatorname{Conv}(\mathfrak{C})$, where $\mathfrak{C}$ is the collection of the following 14 points:
\[
\begin{array}{llll}
     P_0=& (3,0,0,-1,-1,0,1),  & P_{6}=&(2,-1,-1,0,0,1,0), \\
     P_1=& (0,3,0,-1,-1,0,1) & P_7=&(-1,2,-1,0,0,1,0) \\
     P_2=& (0,0,3,-1,-1,0,1), & P_{8}=&(-1,-1,2,0,0,1,0), \\
     P_{3}=& (-1,-1,-1,3,0,1,0), & P_{9}= & (0,0,0,2,-1,0,1), \\
     P_{4}=&(-1,-1,-1,0,3,1,0), & P_{10}=&(0,0,0,-1,2,0,1), \\
     P_{5}=&(-1,-1,-1,0,0,1,0), & P_{11}=& (0,0,0,-1,-1,0,1), \\
     S_1=& (0,0,0,0,0,1,0), & S_2=& (0,0,0,0,0,0,1).
\end{array}
\]

In the (regular) triangulations of $\mathfrak{P}$, we look for a subtriangulation corresponding to $\Sigma_{LT,D_a,D_b}$, as then we obtain a partial compactification of $\tot(\O_{X_{LT}}(-D_b)\oplus\O_{X_{LT}}(-D_a))$ from Corollary \ref{Lem:totalspaceLT}.

\begin{proposition}\label{Prop:LTpartiallycpt}

There exists a chamber $\sigma_{LT}$ in the GKZ fan of $\tot(\O_{X_\nabla}(-D_b')\oplus\O_{X_\nabla}(-D_a'))$ (from Corollary \ref{Cor:TotXnabla}) so that the triangulation $\mathcal{T}$ corresponding to the chamber $\sigma_{LT}$ (in the sense of $\ref{Thm:chambers2nd}$) has the following properties:

\begin{itemize}
    \item $\mathcal{T}$ contains the following set of simplices, listed via their vertices: \[\mathcal{T}_0:=\left\{\{P_i,S_1,S_2\ \vert \  i\in I\}\ \vert\ I \subset \{0,2,\dots,5\}, |I|=5\right\}.\] 
    \item Any simplex $T\in\mathcal{T}\setminus\mathcal{T}_0$ fulfills either of the two following conditions:
    \begin{enumerate}
        \item $S_1,P_6,P_7,P_8\not\in T$ and $\exists\ 3\le j\le 6$ such that $P_j,P_{6+j}\not\in T$.
        \item $S_2,P_9,P_{10},P_{11}\not\in T$ and $\exists\ 0\le j\le 2$ such that $P_j,P_{6+j}\not\in T$.
    \end{enumerate}
\end{itemize}
Moreover, the toric variety $X_\Sigma$ corresponding to the chamber $\sigma_{LT}$ is a partial compactification of the variety $\tot(\O_{X_{LT}}(-D_b)\oplus\O_{X_{LT}}(-D_a))$ from Corollary \ref{Lem:totalspaceLT}.
\end{proposition}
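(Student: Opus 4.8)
The plan is to construct the desired chamber $\sigma_{LT}$ explicitly by writing down a character (equivalently, an ample class or a polytope) whose associated regular triangulation of $\mathfrak{P}$ has the stated properties, and then to verify that the resulting open set, when cut by the superpotential, recovers the partial compactification of $\tot(\O_{X_{LT}}(-D_b)\oplus\O_{X_{LT}}(-D_a))$. Concretely, I would first recall that by Theorem~\ref{Thm:chambers2nd} the chambers of the GKZ fan of $\tot(\O_{X_\nabla}(-D_b')\oplus\O_{X_\nabla}(-D_a'))$ are in bijection with the regular triangulations of $\mathfrak{P}=\operatorname{Conv}(\mathfrak{C})$, where $\mathfrak{C}$ is the set of $14$ points $P_0,\dots,P_{11},S_1,S_2$ listed above. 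The triangulation $\mathcal{T}$ I want must contain the subfan of simplices $\mathcal{T}_0$ — these are precisely the cones of $\Sigma_{LT,D_a,D_b}$ from Corollary~\ref{Lem:totalspaceLT}, re-expressed via the $14$ points — together with additional simplices (those filling in the ``extra'' region coming from the points $P_6,\dots,P_{11}$, which are the rays present in $\Sigma_\nabla$ but not in $\Sigma_{LT}$) that only involve the $P_j$ with $j \ne 6,\dots,11$ in restricted ways, subject to conditions (1) and (2).

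The key steps, in order, would be: (i) produce an explicit height function $h:\mathfrak{C}\to\mathbb{Q}$ (equivalently a piecewise-linear convex function / a point in the secondary polytope) whose lower envelope induces a triangulation $\mathcal{T}$ — a natural choice is to take $h$ very small (or zero) on $P_0,\dots,P_5,S_1,S_2$ and to ``pull in'' or ``push out'' the points $P_6,\dots,P_{11}$ with carefully chosen heights so that near the faces spanned by $\{P_i : i\in I\}\cup\{S_1,S_2\}$ the triangulation agrees with $\mathcal{T}_0$; (ii) check that $h$ is generic enough that the induced subdivision is a genuine triangulation (all cells simplices) and that it is regular — this is automatic for a generic lift, but one needs $\mathcal{T}_0\subseteq\mathcal{T}$, which forces constraints on $h$; (iii) verify that every simplex of $\mathcal{T}$ not in $\mathcal{T}_0$ satisfies condition (1) or (2) — this is a combinatorial check that, because of the way $P_6,\dots,P_{11}$ sit relative to the two ``blocks'' $\{0,1,2\}$ and $\{3,4,5\}$ of coordinates, any simplex touching the interior region must omit one of the complementary pairs $\{P_j,P_{6+j}\}$ and cannot simultaneously contain all of $S_1,P_6,P_7,P_8$ or all of $S_2,P_9,P_{10},P_{11}$; (iv) translate the statement ``$\mathcal{T}_0\subseteq\mathcal{T}$'' into a statement about the open set $U_{\sigma_{LT}}=X\setminus Z(\I_{\sigma_{LT}})$ via the irrelevant-ideal formula of \S\ref{sec:FK}, and compare the irrelevant ideal restricted to the coordinates $x_0,\dots,x_5,u_1,u_2$ with the irrelevant ideal $B(\Sigma_{LT,D_a,D_b})$; since the simplices of $\mathcal{T}_0$ are exactly the maximal cones of $\Sigma_{LT,D_a,D_b}$, the corresponding locus is $\tot(\O_{X_{LT}}(-D_b)\oplus\O_{X_{LT}}(-D_a))$, and the remaining rays $\rho_6,\dots,\rho_{11}$ only add points, realizing $X_{\sigma_{LT}}$ as a partial compactification, i.e. an open immersion $\tot(\O_{X_{LT}}(-D_b)\oplus\O_{X_{LT}}(-D_a))\hookrightarrow X_{\sigma_{LT}}$.

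For the partial-compactification claim specifically, I would argue as follows: the toric variety $X_{\sigma_{LT}}$ has fan $\Sigma$ with rays $\rho_0,\dots,\rho_{11},\tau_1,\tau_2$ (from the $14$ points), and its maximal cones are $\cone(T)$ for $T\in\mathcal{T}$. The subfan consisting of the cones $\cone(T)$ for $T\in\mathcal{T}_0$ together with all their faces is, by Notation~\ref{not:LT} and Corollary~\ref{Lem:totalspaceLT}, exactly $\Sigma_{LT,D_a,D_b}$; hence it defines an open toric subvariety of $X_{\sigma_{LT}}$ isomorphic to $X_{\Sigma_{LT,D_a,D_b}}=\tot(\O_{X_{LT}}(-D_b)\oplus\O_{X_{LT}}(-D_a))$ (a subfan yields a torus-invariant open subscheme, see \S3 of \cite{CLS}). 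Since $\mathcal{T}\supsetneq\mathcal{T}_0$ in general, this open immersion is a proper partial compactification, as claimed. I expect the main obstacle to be step (i)–(iii): exhibiting a single explicit regular triangulation that simultaneously contains $\mathcal{T}_0$ and whose every other simplex satisfies the dichotomy (1)/(2). Because the secondary polytope of these $14$ points is high-dimensional, writing a closed-form height function and then checking the combinatorial conditions on all extra simplices is delicate; in practice I would verify this with an explicit SAGE/polymake computation of a suitable regular triangulation (much as the fan $\Sigma_\nabla$ was produced) and then extract by hand the finitely many simplices not in $\mathcal{T}_0$ to confirm conditions (1) and (2).
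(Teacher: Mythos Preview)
Your proposal is correct in outline and follows the same overall strategy as the paper: exhibit a height function on $\mathfrak{C}$ whose induced regular subdivision contains $\mathcal{T}_0$, verify the combinatorial conditions (1)/(2) on the remaining simplices, and then read off the partial compactification from the inclusion of $\Sigma_{LT,D_a,D_b}$ as a subfan. Your argument for the ``moreover'' clause via subfans is exactly what the paper does (implicitly).

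The one tactical difference worth noting is in how the combinatorial check is organised. You propose choosing a \emph{generic} height function so as to land directly on a triangulation, and then checking conditions (1)/(2) on each extra simplex, likely by machine. The paper instead chooses a deliberately \emph{non-generic} height function ($w(S_j)=1$, $w(P_i)=2$ for $0\le i\le 5$, $w(P_j)=5$ for $6\le j\le 11$) which produces a coarse regular \emph{polyhedral subdivision} $\mathcal{S}$ with only twelve cells: the six simplices of $\mathcal{T}_0$ plus six non-simplicial cells $\widehat{F}_6,\dots,\widehat{F}_{11}$. The point is that each of these six non-simplicial cells already visibly satisfies one of (1) or (2), and the paper checks by hand that $\widehat{F}_i\cap\mathfrak{C}=C_i$, so that \emph{any} regular refinement of $\mathcal{S}$ to a triangulation (which exists by secondary-polytope theory) inherits the conditions. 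This buys a by-hand proof and in fact a whole family of admissible chambers, whereas your direct approach commits you to a single triangulation and a longer simplex-by-simplex verification. Both routes are valid; the paper's coarse-then-refine trick is what turns the ``delicate'' step you flag into something tractable without a computer.
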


The first property of $\mathcal{T}$ means that the associated variety $X_\Sigma$ is a partial compactification of $X_{LT}$, so proving the existence of the triangulation $\mathcal{T}$ is sufficient to prove the Proposition. The second property of $\mathcal{T}$ is not a natural one to consider, but will become necessary to apply results from $\S~\ref{sec:FK}$.

 The proposition can be checked via a simple SAGE program \cite{sagemath} using the TOPCOM package \cite{TOPCOM}, however, we include an explicit proof on how such a triangulation can be constructed.

To prove the proposition, we break the statement up into 3 steps.

\begin{itemize}
    \item[Step 1:] We start by defining an explicit regular polyhedral subdivision $\mathcal{S}$ of $\mathfrak{P}$ containing $\mathcal{T}_0$.
    \item[Step 2:] We prove that the polyhedral subdivision $\mathcal{S}$ can be refined to a regular triangulation $\mathcal{T}$ of $\mathfrak{P}$ containing $\mathcal{T}_0$.
    \item[Step 3:] We show that any regular triangulation obtained this way fulfills the conditions outlined in the Proposition.
\end{itemize}

\subsubsection{Step 1:} We note that $\mathcal{T}_0$ is a regular triangulation of the set of points $P_0,\dots,P_{5},S_1,S_2$. It is in fact a star subdivision with respect to $S_1,S_2$ of the convex hull $\conv(P_0,\dots,P_{5},S_1,S_2)$. Indeed, an example of an explicit weight function $w$ giving the triangulation $\mathcal{T}_0$ is $w(S_1)=w(S_2)=1, w(P_i)=2$ for $0\le i\le 5$. To complete Step 1, we extend this weight function to all $14$ points of $\mathfrak{C}$.

Consider the weight function $w(P_i)=2$ for $0\le i\le 5$, $w(S_1)=w(S_2)=1$ ($j=1,2$) and $w(P_j)=5$ for $6\le j\le 11$. The convex hull of the points 
\[
Z_i=(P_i,w(P_i)),\quad   R_j=(S_j,w(S_j)), \quad (0\le i\le 11, \  j=1,2)
\]then forms a polyhedron $\mathcal{Q}$ in $\R^{8}$. To obtain the regular subdivision of $\mathfrak{P}$ corresponding to the weight function $w$, we need to project the lower facets of the polyhedron $\mathcal{Q}$ down to $\R^{7}$ along the last coordinate. A lower facet is defined to be a facet of $\mathcal{Q}$ where the inward pointing normal has a positive last coordinate.

We claim there are exactly  12 lower facets of $\mathcal{Q}$. We write each lower facet $F_i$ in the form $u_i\cdot x+a_i= 0$ where $u_i$ is the inward pointing normal of the $i^{th}$ facet. Take $H_i$ to be the halfspace corresponding to the lower facet $F_i$, i.e. the halfspace given by $u_i\cdot x+a\geq 0$.
The normals and additive constants are:

\begin{itemize}
    \item $H_0: (5,-1,-1,0,0,0,0,3)x-3\ge0$
    \item $H_1: (-1,5,-1,0,0,0,0,3)x-3\ge0$
    \item $H_2: (-1,-1,5,0,0,0,0,3)x-3\ge0$
    \item $H_{3}: (1,1,1,6,0,0,0,3)x-3\ge0$
    \item $H_{4}: (1,1,1,0,6,0,0,3)x-3\ge0$
    \item $H_{5}: (-5,-5,-5,-6,-6,0,0,3)x-3\ge0$
    \item $H_{6}: (3,-1,-1,0,0,0,2,1)x-1\ge0$
    \item $H_{7}: (-1,3,-1,0,0,0,2,1)x-1\ge0$
    \item $H_{8}: (-1,-1,3,0,0,0,2,1)x-1\ge0$
    \item $H_{9}: (1,1,1,4,0,0,-2,1)x+1\ge0$
    \item $H_{10}: (1,1,1,0,4,0,-2,1)x+1\ge0$
    \item $H_{11}: (-3,-3,-3,-4,-4,0,-2,1)x+1\ge0$.
\end{itemize}

An easy computation shows that all $14$ points lie in the intersection of the relevant half-spaces. This is a direct consequence of the fact that $\mathcal{Q}\subseteq H_i$ for $i=0,\dots,11$.  Table~\ref{Tab:facets} shows which points lie on each lower facet.

\begin{table}
\begin{center}
\begin{tabular}{ |c|c| } 
\hline
  \text{Facet} & \text{contains} \\
 \hline
 $F_0$ & $Z_1,\dots,Z_{5},R_1,R_2$\\
 $\vdots$ & $\vdots$\\
 $F_{5}$ & $Z_0,\dots, Z_{4}, R_1,R_2$\\
 \hline
 $F_{6}$ & $Z_1,\dots,Z_{5},R_1,Z_{7},Z_{8}$ \\
 $F_7$ & $Z_0,Z_2,\dots,Z_5,R_1,Z_6,Z_8$\\
 $F_{8}$ & $Z_0, Z_{1},Z_{3},Z_4,Z_{5},R_1,Z_{6},Z_{7}$ \\
 \hline
 $F_{9}$ & $Z_0,Z_1,Z_2,Z_{4},Z_{5},R_2,Z_{10},Z_{11}$\\
 $F_{10}$ & $Z_0,Z_1,Z_2,Z_3,Z_5,R_2,Z_{9},Z_{11}$\\
 $F_{11}$ & $Z_0,\dots,Z_{4},R_2,Z_{9},Z_{10}$.\\
 \hline
\end{tabular}
\end{center}
\caption{Dictionary of points contained in each lower facet of $\mathcal{Q}$.}
\label{Tab:facets}
\end{table}

To obtain the polyhedral subdivision $\mathcal{S}$ of $\mathfrak{P}$ corresponding to the weight function $w$, we now project these facets down to $\R^{7}$ along the last coordinate. Denoting by $\widehat{F}_i$ the polyhedron obtained by projecting the facet $F_i$, we obtain the set of $12$ polyhedra given in Table~\ref{Tab:PolysinS}.
We note here that when projecting, all points that lied on the facet $F_i$ lie in the polyhedron $\widehat{F}_i$, by convexity of the polyhedron $\mathcal{Q}$ in $\R^{8}$.

\begin{table}[ht!]
\begin{center}
\begin{tabular}{ |ll| } 
\hline
 $\widehat{F}_0=$&$\conv(P_1,\dots,P_{5},S_1,S_2)$\\
 $\vdots$ & $\vdots$\\
 $\widehat{F}_{5}=$&$\conv(P_0,\dots, P_{4}, S_1,S_2)$\\
 \hline
 $\widehat{F}_{6}=$&$\conv(P_1,\dots,P_{5},S_1,P_{7},P_{8})$\\
 $\widehat{F}_7=$&$\conv(P_0,P_2,\dots,P_5,S_1,P_6,P_8)$\\
 $\widehat{F}_{8}=$&$\conv(P_0,P_{1},P_{3},P_4,P_{5},S_1,P_{6},P_{7})$\\
 \hline
 $\widehat{F}_{9}=$&$\conv(P_0,P_1,P_2,P_{4},P_{5},S_2,P_{10},P_{11})$\\
 $\widehat{F}_{10}=$ &$\conv(P_0,\dots,P_3,P_5,S_2,P_{9},P_{11})$  \\
 $\widehat{F}_{11}=$&$\conv(P_0\dots,P_{4},S_2,P_{9},P_{10})$.\\
 \hline
\end{tabular}
\end{center}
\caption{Polyhedra in the regular subdivision.}
\label{Tab:PolysinS}
\end{table}

It remains to show that the above collection $F_i$ contains all the lower facets of $\mathcal{Q}$. 
Showing that there is no other lower facet of $\mathcal{Q}$ apart from $F_0,\dots,F_{11}$ is equivalent to showing that $\bigcup F_i +\langle (0,\dots,0,1)\rangle_{\R_{\geq 0}}$ contains the entire polyhedron $\mathcal{Q}$. Since all vertices of $\mathcal{Q}$ lie inside each half-space $H_i$, it suffices to show that the union of the projections $\widehat{F}_i$ contains the convex hull of $P_0,\dots,P_{11},S_1,S_2$, i.e. contains $\mathfrak{P}$. This is equivalent to saying that they give a polyhedral subdivision (regularity is given by construction).

 So we aim to prove the following claim.

\begin{Lemma}
\label{Lem:Polysubdivision}
For $\widehat{F}_i$ and $\mathfrak{P}$ as above, we have $\bigcup_{i=0}^{11}\widehat{F}_i=\mathfrak{P}$.
\end{Lemma}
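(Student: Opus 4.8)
The containment $\bigcup_{i=0}^{11}\widehat{F}_i\subseteq\mathfrak{P}$ is immediate, since each $\widehat{F}_i$ is the convex hull of a subset of the vertices $P_0,\dots,P_{11},S_1,S_2$ of $\mathfrak{P}$. So the content is the reverse inclusion $\mathfrak{P}\subseteq\bigcup_{i=0}^{11}\widehat{F}_i$. I would prove this by exhibiting, for each point $x\in\mathfrak{P}$, a single facet $\widehat{F}_i$ containing it, via an explicit case analysis driven by the structure of the weight function. The key observation is that the twelve facets come in an obvious symmetry pattern: $\widehat{F}_0,\dots,\widehat{F}_5$ are the ``small-weight'' simplicial facets coming from the star subdivision of $\conv(P_0,\dots,P_5,S_1,S_2)$ with apexes $S_1,S_2$, while $\widehat{F}_6,\widehat{F}_7,\widehat{F}_8$ involve $S_1$ together with $P_6,P_7,P_8$, and $\widehat{F}_9,\widehat{F}_{10},\widehat{F}_{11}$ involve $S_2$ together with $P_9,P_{10},P_{11}$. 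There is moreover an $S_3$-symmetry permuting the index triples $\{0,3\}$-ish coordinates simultaneously, which lets one reduce the number of genuinely distinct cases.

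\textbf{Key steps.} First, I would set up coordinates: write a general point $x\in\mathfrak{P}$ as a convex combination $x=\sum_{i=0}^{11}\alpha_iP_i+\beta_1S_1+\beta_2S_2$ with $\alpha_i,\beta_j\ge0$ summing to $1$. Second, I would use the explicit half-space inequalities $H_0,\dots,H_{11}$ from the table: since every vertex of $\mathfrak{P}$ lies in each $H_i$ (as noted just before the Lemma, because $\mathcal{Q}\subseteq H_i$), we have $\mathfrak{P}\subseteq\bigcap_i H_i$, and the claim reduces to showing that $\bigcap_{i=0}^{11}H_i$ projects exactly onto $\bigcup_i\widehat{F}_i$ — equivalently, that the regular subdivision induced by $w$ has no further cells. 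Third, and most efficiently, I would argue combinatorially at the level of the subdivision: a point $x$ lies in some $\widehat{F}_i$ iff the face of $\mathcal{Q}$ minimizing the linear functional corresponding to $x$'s ``lift'' is one of the $F_i$; so it suffices to check that $F_0,\dots,F_{11}$ are genuinely the complete list of lower facets. For this I would verify (a) each $F_i$ is indeed a facet, i.e. the points listed in Table~\ref{Tab:facets} as lying on it are affinely spanning a hyperplane and all of $\mathcal{Q}$ lies weakly on one side — which is exactly the half-space computation already displayed — and (b) these facets glue along common ridges to cover the whole lower boundary, which one can confirm by checking that each $6$-element subset of points that spans a ridge is shared by exactly two of the $F_i$ (a finite adjacency check). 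Alternatively, the ``hands-on'' route: partition $\mathfrak{P}$ according to which $\alpha_i$ or $\beta_j$ vanish and, in each stratum, explicitly write $x$ as a convex combination of the vertices of one specific $\widehat{F}_i$, using the symmetry to collapse cases.

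\textbf{Main obstacle.} The genuine difficulty is bookkeeping: there are $12$ facets, each with $7$ or $8$ listed vertices in dimension $7$, and naively one must rule out spurious lower facets. I expect the cleanest argument is the adjacency/covering check — showing the twelve $\widehat{F}_i$ are pairwise non-overlapping (intersecting only in common faces) and that their union is closed and open in $\mathfrak{P}$ relative to its affine span, hence everything — but making ``open and closed'' rigorous still requires confirming that every codimension-one face of some $\widehat{F}_i$ is either on $\partial\mathfrak{P}$ or shared with exactly one other $\widehat{F}_j$. The $S_3$-symmetry of the configuration (permuting $\{P_0,P_1,P_2\}$, $\{P_3,P_4,P_5\}$, $\{P_6,P_7,P_8\}$, $\{P_9,P_{10},P_{11}\}$ compatibly) is the crucial labor-saving device: it reduces the facet-adjacency verification to a handful of orbit representatives. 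I would carry out that reduction first, then dispatch the remaining representative cases by direct computation with the inequalities $H_i$.
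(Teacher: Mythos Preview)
Your plan is sound, but the paper takes a rather different and more hands-on route. Rather than an adjacency/ridge check on the lower boundary of $\mathcal{Q}$, the paper argues pointwise: given $X=\sum_{i=0}^{11}\lambda_iP_i\in\mathfrak{P}$, it explicitly \emph{rewrites} $X$ as a convex combination of the vertices of one particular $\widehat{F}_j$. The engine for this is a small stock of linear relations among the fourteen points---most importantly $\sum_{i=0}^{5}P_i=3(S_1+S_2)$, which immediately shows that $\widehat{F}_0,\dots,\widehat{F}_5$ cover $\conv(P_0,\dots,P_5,S_1,S_2)$ by the ``subtract the minimum coefficient'' trick. For a general $X$ the paper then compares $\lambda_6+\lambda_7+\lambda_8$ with $\lambda_9+\lambda_{10}+\lambda_{11}$ and with $\min_{0\le i\le 2}(\lambda_i+\lambda_{6+i})$; depending on which inequality holds, it either lands $X$ in the inner block $\bigcup_{i\le 5}\widehat{F}_i$ or in one of $\widehat{F}_6,\widehat{F}_7,\widehat{F}_8$ (resp.\ $\widehat{F}_9,\widehat{F}_{10},\widehat{F}_{11}$). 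The existence of the needed convex coefficients in the latter case is secured by the elementary intermediate-value Lemma~\ref{Lem:inequalitybounds}, not by any facet enumeration.

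What each approach buys: your ridge/covering argument is conceptually cleaner and is the standard way to certify that a list of lower facets is complete, but you would have to compute the full face lattice of the non-simplicial cells $\widehat{F}_6,\dots,\widehat{F}_{11}$ (each has eight vertices in a $6$-dimensional polytope, so their facets are \emph{not} just ``drop one vertex''), and then match ridges across the twelve cells---doable with the $S_3$-symmetry, but still a nontrivial bookkeeping exercise. The paper's approach sidesteps all of that facet combinatorics at the cost of some ad~hoc coefficient manipulation and the auxiliary Lemma~\ref{Lem:inequalitybounds}; it never needs to know what the facets of the $\widehat{F}_i$ are. One small imprecision in your write-up: your description of the ridge check as ``each $6$-element subset of points that spans a ridge'' is only literally correct for the simplicial $\widehat{F}_0,\dots,\widehat{F}_5$; for $\widehat{F}_6,\dots,\widehat{F}_{11}$ you would need to identify their actual facets first.
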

\noindent To prove Lemma $\ref{Lem:Polysubdivision}$, we will need the following result.
\begin{Lemma}
\label{Lem:inequalitybounds}
Suppose we are given a set of $m$ inequalities $L_j\le R_j$ with $\sum_{j=1}^m L_j\le C\le \sum_{j=1}^m R_j$, then there exists an $m$-tuple of real numbers $a_j$ such that $L_j\le a_j\le R_j$ and $\sum_{j=1}^m a_j=C$.
\end{Lemma}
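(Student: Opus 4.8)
The plan is to prove Lemma~\ref{Lem:inequalitybounds} by a greedy/induction argument on $m$. First I would observe that the statement is monotone: increasing any single $a_j$ within its allowed window $[L_j,R_j]$ increases $\sum_j a_j$ continuously, so the achievable values of $\sum_j a_j$ as the tuple $(a_j)$ ranges over $\prod_j [L_j,R_j]$ form exactly the interval $[\sum_j L_j, \sum_j R_j]$. Since $C$ lies in this interval by hypothesis, such a tuple exists. To make this rigorous without invoking a connectedness/intermediate-value abstraction, I would set it up as an explicit construction.

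\medskip

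\noindent\emph{Base case and induction.} For $m=1$ the hypothesis reads $L_1 \le C \le R_1$, so $a_1 := C$ works. For the inductive step, assume the result for $m-1$ inequalities. Given $L_j \le R_j$ for $1\le j\le m$ with $\sum_{j=1}^m L_j \le C \le \sum_{j=1}^m R_j$, I would choose $a_m \in [L_m, R_m]$ so that $C - a_m$ lands in the interval $\bigl[\sum_{j=1}^{m-1} L_j,\ \sum_{j=1}^{m-1} R_j\bigr]$; then the inductive hypothesis applied to the first $m-1$ inequalities with target $C-a_m$ furnishes $a_1,\dots,a_{m-1}$, completing the tuple. Concretely, the constraint $\sum_{j=1}^{m-1} L_j \le C - a_m \le \sum_{j=1}^{m-1} R_j$ together with $L_m \le a_m \le R_m$ defines the admissible range for $a_m$ as
\[
\max\Bigl(L_m,\ C - \sum_{j=1}^{m-1} R_j\Bigr)\ \le\ a_m\ \le\ \min\Bigl(R_m,\ C - \sum_{j=1}^{m-1} L_j\Bigr),
\]
so I would simply take $a_m$ to be the left endpoint of this range (equivalently $a_m := \max(L_m,\, C - \sum_{j=1}^{m-1} R_j)$).

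\medskip

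\noindent\emph{Nonemptiness of the range.} The only thing requiring care — and the main (minor) obstacle — is checking that this range for $a_m$ is nonempty, i.e. that the indicated max does not exceed the indicated min. This amounts to four inequalities: $L_m \le R_m$ (given); $L_m \le C - \sum_{j=1}^{m-1} L_j$, i.e. $\sum_{j=1}^m L_j \le C$ (given); $C - \sum_{j=1}^{m-1} R_j \le R_m$, i.e. $C \le \sum_{j=1}^m R_j$ (given); and $C - \sum_{j=1}^{m-1} R_j \le C - \sum_{j=1}^{m-1} L_j$, i.e. $\sum_{j=1}^{m-1} L_j \le \sum_{j=1}^{m-1} R_j$, which follows from $L_j \le R_j$ for each $j$. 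All four hold, so a valid $a_m$ exists and the induction goes through. (Alternatively, one can avoid induction entirely: define $a_j := L_j + t(R_j - L_j)$ for a common $t\in[0,1]$ when $\sum_j(R_j-L_j) > 0$, solving $\sum_j a_j = C$ for $t = (C - \sum_j L_j)/\sum_j (R_j - L_j)$, and note $t\in[0,1]$ by the hypothesis; the degenerate case $\sum_j(R_j-L_j)=0$ forces $L_j=R_j$ for all $j$ and $C = \sum_j L_j$, so $a_j := L_j$ works. I would likely present this one-line argument as the main proof and mention the inductive version as a remark, or vice versa.)
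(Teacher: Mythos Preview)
Your proposal is correct. In fact the alternative one-line argument you sketch at the end---setting $a_j = L_j + t(R_j - L_j)$ and invoking the intermediate value theorem on $f(t) = \sum_j a_j(t)$---is exactly the paper's proof (the paper does not separate out the degenerate case $\sum_j(R_j-L_j)=0$, since the IVT formulation handles it uniformly). Your primary inductive/greedy argument is a genuinely different route: it is more constructive and avoids any appeal to continuity, at the cost of being slightly longer to write out and requiring the four-inequality check for nonemptiness of the range for $a_m$. Either approach is fine here; if you want to match the paper's brevity, lead with the interpolation argument.
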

\begin{proof}
To show that the claim holds, we define $a_j(x)=L_j+x(R_j-L_j)$. This is a linear function such that, for all $x\in [0,1]$, $L_j\le a_j(x)\le R_j$. Define $f(x)=\sum a_j(x)$. $f$ is itself linear and thus continuous in $x$, with $f(0)=\sum_{j=1}^m L_j\le C\le \sum_{j=1}^m R_j=f(1)$. By the intermediate value theorem, there is an $x_C\in [0,1]$ such that $f(x)=\sum_{j=1}^m a_j(x_C)=C$. Setting $a_j=a_j(x_C)$ gives the $m$-tuple, proving the claim.
\end{proof}

\begin{proof}[Proof of Lemma $\ref{Lem:Polysubdivision}$]
The first thing to note is that \[\mathfrak{P}=\conv(P_0,\dots,P_{11},S_1,S_2)=\conv(P_0,\dots,P_{11}).\] So we will show that $\bigcup_{i=0}^{11}\widehat{F}_i=\conv(P_0,\dots,P_{11})$. 

We start by showing that $\bigcup_{i=0}^{5}\widehat{F}_i=\conv(P_0,\dots,P_{5},S_1,S_2)$, which is equivalent to saying that $\widehat{F}_0,\dots,\widehat{F}_{5}$ form a polyhedral subdivision of $\conv(P_0,\dots,P_{5},S_1,S_2)$.

The inclusion $\subseteq$ is immediate from Table \ref{Tab:PolysinS}, so it remains to check the opposite inclusion. Any point $X\in\conv(P_0,\dots,P_{5},S_1,S_2)$ can be written as $X=\sum_{i=0}^{5} \lambda_i P_i+\mu_1S_1+\mu_2S_2$ for some $\lambda_i,\mu_j\in\R_{\geq0}$ with $\sum\lambda_i+\mu_1+\mu_2=1$. Note also that $\sum_{i=0}^{5}P_i=3(S_1+S_2)$. Now define $j$ such that $\lambda_j=\min_{0\le i\le 5}\{\lambda_i\}$. Then \[X=\sum_{i=0}^{5}(\lambda_i-\lambda_j)P_i+(3\lambda_j+\mu_1)S_1+(3\lambda_j+\mu_2)S_2=\sum_{\substack{0\le i\le 5\\ i\neq j}} (\lambda_i-\lambda_j)P_i+(3\lambda_j+\mu_1)S_1+(3\lambda_j+\mu_2)S_2.\] Since $\lambda_j=\min_{0\le i\le 5}\{\lambda_i\}\le \lambda_i$ for $0\le i\le 5$, we have that $(\lambda_i-\lambda_j)\geq 0$ for $0\le i\le 5$. As $\lambda_i,\mu_1,\mu_2\geq 0$, we also have $3\lambda_j+\mu_1, 3\lambda_j+\mu_2\geq0$. Also, \[\sum_{\substack{0\le i\le 5\\ i\neq j}}(\lambda_i-\lambda_j)+(3\lambda_j+\mu_1)+(3\lambda_j+\mu_2)=\sum_{i=0}^{5}\lambda_i+\mu_1+\mu_2=1,
\]
and thus $X\in \widehat{F}_j$. This shows $\bigcup_{i=0}^{5}\widehat{F}_i=\conv(P_0,\dots,P_{5},S_1,S_2)$.

To show $\bigcup_{i=0}^{11}\widehat{F}_i=\mathfrak{P}$, we note again that the inclusion $\subseteq$ is immediate. For the opposite inclusion $\supseteq$, take a general point $X$ in $\mathfrak{P}$. Then $X$ can be written as $X=\sum_{i=0}^{11}\lambda_i P_i$ with $\lambda_i\geq 0$ for $0\le i\le 11$ and $\sum_{i=0}^{11}\lambda_i=1$. 

Without loss of generality, assume that $(\lambda_6+\lambda_7+\lambda_8)\geq(\lambda_9+\lambda_{10}+\lambda_{11})$ (the case where the inequality is reversed is analogous). We will now show that if $X\not\in\bigcup_{i=0}^{5}\widehat{F}_i=\conv(P_0,\dots,P_{5},S_1,S_2)$, then $X\in\bigcup_{i=6}^{8} \widehat{F}_i$ (if the inequality had been reversed, then $X$ would be in $\bigcup_{i=9}^{11}\widehat{F}_i$).

Let
\begin{equation*}\begin{array}{llr}
     \nu_i= &\lambda_i+\lambda_{6+i}-\frac{1}{3}\left((\lambda_6+\lambda_7+\lambda_8)-(\lambda_9+\lambda_{10}+\lambda_{11})\right)&\text{for }0 \le i\le 2, \\
     \nu_i= &\lambda_i+\lambda_{6+i}&\text{for }3\le i\le 5,\\
     \mu_1=&\left((\lambda_6+\lambda_7+\lambda_8)-(\lambda_9+\lambda_{10}+\lambda_{11})\right),&\\
     \mu_2=&0.& 
\end{array}\end{equation*}

\noindent Then \[
\sum_{i=0}^{5}\nu_i P_i+\mu_1S_1+\mu_2S_2=\sum_{i=0}^{11}\lambda_i P_i
\]
and \[
\sum_{i=0}^{5}\nu_i+\mu_1+\mu_2=\sum_{i=0}^{11}\lambda_i=1.
\]

Note $\mu_1\geq0$ by assumption and $\mu_2=0$. Thus, if $\nu_i\geq 0$ for $0\le i\le 5$,  $X$ is expressed as an element of $\conv(P_0,\dots,P_{5},S_1,S_2)=\bigcup_{i=0}^{5}\widehat{F}_i$ using the above equations. Otherwise, we will claim that $X\in\bigcup_{i=6}^8 \widehat{F_i}$. For $3\le i\le 5$, we have $\nu_i\geq 0$ as both $\lambda_i$ and $\lambda_{6+i}$ are $\geq0$. We turn our attention to the $\nu_i$ for $i=0,1,2$.

For $0\le i\le 2$, $\nu_i\geq0$ is equivalent to \[\frac{1}{3}\left((\lambda_6+\lambda_7+\lambda_8)-(\lambda_9+\lambda_{10}+\lambda_{11})\right)\le \lambda_i+\lambda_{6+i},\] so the condition that all $\nu_i$ are non-negative is equivalent to \[\frac{1}{3}\left((\lambda_6+\lambda_7+\lambda_8)-(\lambda_9+\lambda_{10}+\lambda_{11})\right)\le \min_{0\le i\le 2}\{\lambda_i+\lambda_{6+i}\}.\] 

Therefore, $X\in\bigcup_{i=0}^{5}\widehat{F}_i=\conv(P_0,\dots,P_{5},S_1,S_2)$ if $\frac{1}{3}\left((\lambda_6+\lambda_7+\lambda_8)-(\lambda_9+\lambda_{10}+\lambda_{11})\right)\le \min_{0\le i\le 2}\{\lambda_i+\lambda_{6+i}\}$. Suppose this condition does not hold, i.e. \begin{equation}\label{eq:pftriangulation}\min_{1\le i\le 3}\{\lambda_i+\lambda_{6+i}\}<\frac{1}{3}\left((\lambda_6+\lambda_7+\lambda_8)-(\lambda_9+\lambda_{10}+\lambda_{11})\right).\end{equation}

Without loss of generality, we may assume that $\lambda_0+\lambda_6=\min_{0\le i\le 2}\{\lambda_i+\lambda_{6+i}\}$ (by symmetry, the other cases are analogous). We will show that $X\in\widehat{F}_{6}$. Any point $Y$ in $\widehat{F}_{6}=\conv{(P_1,\dots,P_{5},S_1,P_{7},P_{8})}$ can be written as \[
Y=\sum_{i=1}^{5}\nu_i P_i+\sum_{i=7}^{8}\nu_i P_i+\mu_1S_1.
\]
If we find $\nu_i,\mu_1$ such that this sum is equal to $\sum_{i=0}^{11}\lambda_i P_i=X$, we are done as we will have expressed $X$ as an element of $\widehat{F}_{6}$. 

Given a choice of real numbers $\alpha_1,\alpha_2$ with $\alpha_1+\alpha_2=1$, define

\begin{equation*}\begin{array}{llr}
    \nu_i=& \lambda_i+\alpha_i(3\lambda_0+2\lambda_{6}+(\lambda_9+\lambda_{10}+\lambda_{11}))-(\lambda_0+\lambda_6)&\text{for } 1\le i\le 2,\\
    \nu_i=& \lambda_i+\lambda_{6+i} & \text{for } 3\le i\le 5,\\
    \mu_1=& 3\lambda_0+3\lambda_{6},& \\
     \nu_{6+i}=& \lambda_{6+i}+\alpha_i(-3\lambda_0-2\lambda_{6}-(\lambda_9+\lambda_{10}+\lambda_{11})) & \text{for }1\le i\le 2.
\end{array}\end{equation*}
Substituting these values into the expression for $Y$ gives
\[
Y=\sum_{i=1}^{5}\nu_i P_i+\sum_{i=7}^{8}\nu_i P_i+\mu_1S_1=\sum_{i=0}^{11}\lambda_i P_i=X,
\]
as well as \[
\sum \nu_i+\mu_1=\sum_{i=0}^{11}\lambda_i=1.
\]
For this choice of $\nu_i$'s and $\mu_1$ to define an element $Y\in \widehat{F}_{6}$, we require $\nu_i\geq 0$ for all $i$ and $\mu_1\geq 0$. We note that, as $\lambda_0,\lambda_{6}\geq 0$, we have $\mu_1\geq 0$.

Therefore, what remains to prove is that there exist $\alpha_1,\alpha_2\in \R$ with $\alpha_1+\alpha_2=1$ such that $\nu_i\geq 0$ for $i\in\{1,\dots,5,7,8\}$.
For $i=1,2$, we can arrange the inequalities $\nu_i\geq 0$ and $\nu_{6+i}\geq 0$ to give
\begin{equation}
\label{eq:ineqstriang}
\frac{\lambda_0+\lambda_6-\lambda_i}{3\lambda_0+2\lambda_{6}+(\lambda_9+\lambda_{10}+\lambda_{11})}\le \alpha_i\le \frac{\lambda_{6+i}}{3\lambda_0+2\lambda_{6}+(\lambda_9+\lambda_{10}+\lambda_{11})}.
\end{equation}
This works provided $3\lambda_0+2\lambda_{6}+(\lambda_9+\lambda_{10}+\lambda_{11})\neq 0$ but if that term was zero, then by non-negativity of the $\lambda_i$ we would have $\lambda_0=\lambda_{6}=\lambda_{9}=\dots=\lambda_{11}=0$ and thus $X\in\widehat{F}_{6}$.
So if there exists a pair $(\alpha_1,\alpha_2)$ with $\eqref{eq:ineqstriang}$ holding for $i=1,2$ and $\alpha_1+\alpha_2=1$, then $X\in \widehat{F}_{6}$.

\noindent Note that for all $i$, \[
\frac{\lambda_0+\lambda_6-\lambda_i}{3\lambda_0+2\lambda_{6}+(\lambda_9+\lambda_{10}+\lambda_{11})}\le\frac{\lambda_{6+i}}{3\lambda_0+2\lambda_{6}+(\lambda_9+\lambda_{10}+\lambda_{11})},\]
as $\lambda_0+\lambda_6=\min_{0\le i\le 2}\{\lambda_i+\lambda_{6+i}\}$.

\noindent Furthermore, 
\begin{equation*}
\begin{array}{lrl}
  & 0 & \le \lambda_0+\lambda_1+\lambda_2+\lambda_9+\lambda_{10}+\lambda_{11}\\
 \Leftrightarrow & 2\lambda_0+2\lambda_{6} & \le 3\lambda_0+\lambda_1+\lambda_2+2\lambda_{6}+\lambda_9+\lambda_{10}+\lambda_{11}\\
 \Leftrightarrow & \sum_{i=1}^2 \lambda_0+\lambda_6-\lambda_i & \le 3\lambda_0+2\lambda_{6}+(\lambda_9+\lambda_{10}+\lambda_{11})\\
 \Leftrightarrow & \sum_{i=1}^{2}\frac{\lambda_0+\lambda_6-\lambda_i}{3\lambda_0+2\lambda_{6}+(\lambda_9+\lambda_{10}+\lambda_{11})} & \le 1.
\end{array}
\end{equation*}
Lastly, we are given that $\lambda_0+\lambda_6=\min_{0\le i\le 2}\{\lambda_i+\lambda_{6+i}\}\le \frac{1}{3}\left((\lambda_6+\lambda_7+\lambda_8)-(\lambda_9+\lambda_{10}+\lambda_{11})\right)$. This leads to the following sequence of implications:

\begin{equation*}
\begin{array}{lrl}
& \lambda_0+\lambda_6 &\le \frac{1}{3}\left((\lambda_6+\lambda_7+\lambda_8)-(\lambda_{9}+\lambda_{10}+\lambda_{11})\right)\\
\Leftrightarrow & 3\lambda_0+2\lambda_{6}+\lambda_9+\lambda_{10}+\lambda_{11} & \le \lambda_{7}+\lambda_{8}\\
\Leftrightarrow & 1 & \le \sum_{i=1}^{2}\frac{\lambda_{6+i}}{3\lambda_0+2\lambda_{6}+(\lambda_9+\lambda_{10}+\lambda_{11})}.
\end{array}
\end{equation*}

In summary, we have shown that for $i=1,2$, we have \[\frac{\lambda_0+\lambda_6-\lambda_i}{3\lambda_0+2\lambda_{6}+(\lambda_9+\lambda_{10}+\lambda_{11})}\le\frac{\lambda_{6+i}}{3\lambda_0+2\lambda_{6}+(\lambda_9+\lambda_{10}+\lambda_{11})},\]
 and that 
 
 \[\sum_{i=1}^{2}\frac{\lambda_0+\lambda_6-\lambda_i}{3\lambda_0+2\lambda_{6}+(\lambda_9+\lambda_{10}+\lambda_{11})}\le 1\le \sum_{i=1}^{2}\frac{\lambda_{6+i}}{3\lambda_0+2\lambda_{6}+(\lambda_9+\lambda_{10}+\lambda_{11})}.\]

Applying Lemma \ref{Lem:inequalitybounds} gives us the existence of a pair $\alpha_1,\alpha_2$ as required, concluding the proof to Lemma \ref{Lem:Polysubdivision}.
\end{proof}

The Lemma \ref{Lem:Polysubdivision} shows that we have indeed found all lower facets of the polyhedron $\mathcal{Q}$, meaning that the collection $\widehat{F}_1,\dots,\widehat{F}_{11}$ gives a regular polyhedral subdivision $\mathcal{S}$ of $\mathfrak{P}$, thus concluding Step 1.

\subsubsection{Step 2:} 
This is true by general convex geometry (using the poset of refinements and the secondary polytope). By Theorem 2.4 in Chapter 7 of \cite{GKZ}, the poset of (non-empty) faces of the secondary polytope $\Sigma(\mathfrak{P})$ is isomorphic to the poset of all regular subdivisions of $\mathfrak{P}$, partially ordered by refinement (see also Theorem 16.4.1 in \cite{Handbook}). The vertices of $\Sigma(\mathfrak{P})$ correspond to regular triangulations. Thus, our regular subdivision obtained by projection must correspond to some face of $\Sigma(\mathfrak{P})$ and any vertex of that face will correspond to a regular triangulation refining it.

\subsubsection{Step 3:}
Consider a regular triangulation $\mathcal{T}$ obtained by refining $\mathcal{S}$. By definition, it is a regular triangulation of $\mathfrak{P}$. Recall Table \ref{Tab:PolysinS}. Denote by $C_i$ the collection of points used to define the polyhedron $\widehat{F}_i$ in the table. Note that $\widehat{F}_0,\dots,\widehat{F}_{5}$ are the simplices in $\mathcal{T}_0$, and therefore any simplices in $\mathcal{T}\setminus\mathcal{T}_0$ do not originate from refining any of $\widehat{F}_0,\dots,\widehat{F}_5$. 

Thus the last step of the proof reduces to showing that none of the polyhedra $\widehat{F}_i,\ 0\le i \le 11,$ contain any of the points we did not define it by, i.e. $\widehat{F}_i\cap\mathfrak{C}=C_i$. Indeed, in that case we note that, by consulting Table \ref{Tab:PolysinS}, the polyhedra $\widehat{F}_i$ each fulfill at least one of the conditions $A$ or $B$ in the proposition. If $\widehat{F}_i\cap\mathfrak{C}=C_i$, then all simplices in a refinement of $\widehat{F}_i$ are defined as the convex hull of a subset of $C_i$ (as there is no interior point to refine upon), thus inheriting the properties $A$ or $B$ from $\widehat{F}_i$. 

Showing that $\widehat{F}_i\cap\mathfrak{C}=C_i$ for $6\le i\le 11$ reduces to a simple computation. We shall do the computation for $\widehat{F}_6$, as the remaining cases are analogous by symmetry. 

We need to show that $P_0,P_6,P_9,P_{10},P_{11},S_1\not \in \widehat{F}_6$. Any point $X$ in $\widehat{F}_6$ can be written as \begin{eqnarray}\label{eq:Pinhats}
&&\lambda_1 P_1+\dots+\lambda_5 P_5+\mu_1 S_1+\lambda_7 P_7+\lambda_8 P_8=(
-\lambda_3-\lambda_4-\lambda_5-\lambda_7-\lambda_8,\nonumber\\
 &&3\lambda_1-\lambda_3-\lambda_4-\lambda_5+2\lambda_7-\lambda_8,
3\lambda_2-\lambda_3-\lambda_4-\lambda_5-\lambda_7+2\lambda_8,
-\lambda_1-\lambda_2+3\lambda_3,\nonumber\\
&&\lambda_1+\lambda_2+3\lambda_4,
\lambda_3+\lambda_4+\lambda_5+\mu_1+\lambda_7+\lambda_8,
\lambda_1+\lambda_2),
\end{eqnarray} with $\lambda_i,\mu_1\ge 0$ and $\sum \lambda_i+\mu_1=1$. We note that the last two coordinates of $X$ are $\lambda_3+\lambda_4+\lambda_5+\mu_1+\lambda_7+\lambda_8$ and $\lambda_1+\lambda_2$ respectively.
Assume $P_0\in\widehat{F}_6$ and had an expression as in Equation \eqref{eq:Pinhats}. Then, as $\lambda_i,\mu_1\geq0$, we can see by looking at the last two coordinates that $\lambda_3=\lambda_4=\lambda_5=\mu_1=\lambda_7=\lambda_8=0$  and $\lambda_1+\lambda_2=1$. But then the first coordinate is $\lambda_1\cdot 0+\lambda_2\cdot 0=0\neq 2$, hence we get a contradiction and $P_0\not\in \widehat{F}_6$. By an analogous reasoning, for $S_2,P_9,P_{10},P_{11}$ we obtain that all but $\lambda_1,\lambda_2$ would need to be 0 again and the sum of these two would need to be 1, which means that not both the second and third coordinate (being $3\lambda_1, 3\lambda_2$) can be 0. Hence $S_2,P_9,P_{10},P_{11}\not\in\widehat{F}_6$. 

Finally, we need to show $P_6\not\in\widehat{F}_6$. Assume we had an expression for $P_6$ as in Equation \eqref{eq:Pinhats}. Since $\lambda_i\geq 0$, considering the last two coordinates gives $\lambda_1=\lambda_2=0$ (since $\lambda_i\geq 0$) and $\lambda_3+\lambda_4+\lambda_5+\mu_1+\lambda_7+\lambda_8=1$. But then the first coordinate is $-(\lambda_3+\lambda_4+\lambda_5+\lambda_7+\lambda_8)\le 0<2$, a contradiction. Thus $P_6\not\in\widehat{F}_6$, and thus $\widehat{F}_6\cap\mathfrak{C}=C_6$ as claimed.

The other cases are analogous by symmetry. Thus we finished Step 3, hence proving Proposition \ref{Prop:LTpartiallycpt}.

\subsection{The ideals associated to the partial compactification}

Recalling the notation from \textsection \ref{sec:FK}, we denote by $x_i$ the variable in $\C[x_0,\dots,x_{11},u_1,u_2]$ corresponding to the point $P_i$ and by $u_j$ the variable corresponding to $S_j$. These 14 variables correspond to the rays of the fan $\Sigma_{\nabla,D_a',D_a'}$ from Corollary \ref{Cor:TotXnabla}. 

\begin{Lemma}
\label{Lem:sectioninBB}
There exists a global function on $X_{\nabla,D_a',D_b'}$ that has the form
\begin{equation}\begin{aligned}\label{eq:SuperpotentialLT}w&=u_1(c_0x_0^3x_6^3+c_1x_1^3x_7^3+c_2x_2^3x_8^3-3\lambda_1x_3x_4x_5x_6x_7x_8) \\& \qquad {} +u_2(c_3x_3^3x_9^3+c_4x_4^3x_{10}^3+c_5x_5^3x_{11}^3-3\lambda_2x_0x_1x_2x_9x_{10}x_{11}),
\end{aligned}\end{equation}
for some $c_i,\lambda_j\in\C$.
\end{Lemma}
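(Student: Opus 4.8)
The plan is to identify the global functions on $X_{\nabla,D_a',D_b'}$ with lattice points in the dual cone $|\Sigma_{\nabla,D_a',D_b'}|^\vee$, exactly as was done in \textsection\ref{rem:LT} and \textsection\ref{sec:zerolocusLT} for the Libgober-Teitelbaum total space. Recall that a lattice point $m$ in the dual cone gives the monomial $x^m=\prod_\rho x_\rho^{\langle m,u_\rho\rangle}$, and this is a genuine (polynomial) global function precisely when all the exponents $\langle m,u_\rho\rangle$ are non-negative, i.e. when $m$ lies in the dual cone of $\cone(u_{P_0},\dots,u_{P_{11}},u_{S_1},u_{S_2})$. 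So the first step is to write down the twelve monomials appearing in \eqref{eq:SuperpotentialLT} --- namely $u_1 x_0^3x_6^3$, $u_1 x_1^3x_7^3$, $u_1 x_2^3x_8^3$, $u_1 x_3x_4x_5x_6x_7x_8$, and the four analogous terms with $u_2$ --- and exhibit for each of them the corresponding lattice point $m$ in $N_{\R}\oplus\R^2$, then verify that each such $m$ pairs non-negatively against all fourteen ray generators $u_{P_0},\dots,u_{P_{11}},u_{S_1},u_{S_2}$ listed in \textsection\ref{sec:BB}.

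Concretely, the exponents in each monomial already tell us what the pairings $\langle m,u_{P_i}\rangle$ and $\langle m,u_{S_j}\rangle$ must be: e.g. for $u_1 x_0^3 x_6^3$ we need a lattice point $m\in N_{\R}\oplus\R^2$ with $\langle m,u_{P_0}\rangle=3$, $\langle m,u_{P_6}\rangle=3$, $\langle m,u_{S_1}\rangle=1$, and $\langle m,u_{P_i}\rangle=0$ for all other $i$ and $\langle m,u_{S_2}\rangle=0$. Since $u_{P_0},\dots,u_{P_{11}},u_{S_1},u_{S_2}$ span $M_{\R}\oplus\R^2$ (they are the rays of a full-dimensional cone), such an $m$ is unique if it exists; one solves the resulting linear system (a straightforward $7\times 7$ computation, or a SAGE check) and observes that the solution is indeed a lattice point. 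Doing this for all twelve monomials produces twelve lattice points; any $\C$-linear combination of the associated monomials is again a global function, and choosing the coefficients to be $c_0,\dots,c_5,-3\lambda_1,-3\lambda_2$ (grouped as in \eqref{eq:SuperpotentialLT}) yields $w$. The non-negativity of all pairings is what guarantees $w$ is a \emph{regular} global function on $X_{\nabla,D_a',D_b'}$ rather than just a rational one; this is the substantive point and the one that should be checked carefully against the explicit ray list.

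I would organize the write-up as: (i) recall that $X_{\nabla,D_a',D_b'}$ has total coordinate ring $\C[x_0,\dots,x_{11},u_1,u_2]$ with the fourteen rays from Corollary \ref{Cor:TotXnabla}, and that polynomials in these variables all define global functions; (ii) note that each of the twelve monomials in \eqref{eq:SuperpotentialLT} is literally a monomial in these fourteen variables, hence a global function; (iii) therefore any $\C$-linear combination of them, in particular $w$ as written, is a global function on $X_{\nabla,D_a',D_b'}$. In fact, since $x_0,\dots,x_{11},u_1,u_2$ are the Cox coordinates, \emph{every} polynomial in them is automatically a global section of the appropriate line bundle and a global function on the affine cone, so once one observes that $w$ is a polynomial in the Cox variables, the lemma is immediate; the only thing worth spelling out is the dictionary between the monomials of $w$ and the rays, which I would present in a short table mirroring the one in \textsection\ref{sec:zerolocusLT}.

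The main obstacle is essentially bookkeeping rather than mathematical depth: one must make sure the monomials written in \eqref{eq:SuperpotentialLT} are genuinely supported on the correct Cox variables --- i.e. that the term $x_3x_4x_5x_6x_7x_8$ (and not, say, some other product) is what arises, which requires tracing through the identification of the variables $x_i$ with the points $P_i$ and matching with the degrees forced by the nef partition $D_a',D_b'$. A secondary subtlety is that the lemma only asserts \emph{existence} of such a $w$ "for some $c_i,\lambda_j\in\C$", so one does not need to pin down the constants precisely; they will be normalized in the subsequent application of the VGIT machinery. I expect the proof in the paper to proceed exactly along these lines, possibly with the explicit lattice-point computation deferred to a SAGE verification, and I would do the same while including the one sample computation for $u_1x_0^3x_6^3$ to make the pattern transparent.
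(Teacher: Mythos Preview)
Your plan in the first two paragraphs is correct and is essentially what the paper does, only in the reverse direction: the paper first writes down the eight lattice points generating $|\Sigma_{\nabla,D_a',D_b'}|^\vee$ (these turn out to be exactly the $u_{\overline{\nu_i}}$ and $u_{\tau_j}$ of \eqref{the nus and taus}) and then reads off the corresponding monomials via $x^m=\prod_\rho x_\rho^{\langle m,u_\rho\rangle}$, whereas you propose to start with the monomials and solve for the lattice points. Either direction works. (Minor slip: there are eight monomials in \eqref{eq:SuperpotentialLT}, not twelve.)

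Your ``organized write-up'' in the third paragraph, however, contains a real error. It is \emph{not} true that every polynomial in the Cox coordinates $x_0,\dots,x_{11},u_1,u_2$ is a global function on $X_{\nabla,D_a',D_b'}$. Elements of the Cox ring descend to global functions on the toric variety only when they are $G_\Sigma$-invariant, i.e.\ of degree zero in $\coker\iota$; equivalently, a monomial $\prod x_\rho^{a_\rho}$ is a global function precisely when there exists a lattice point $m$ with $a_\rho=\langle m,u_\rho\rangle$ for all $\rho$. For instance $x_0$ alone is not a global function. So the sentence ``once one observes that $w$ is a polynomial in the Cox variables, the lemma is immediate'' is false: the entire content of the lemma is verifying that each of the eight monomials is degree zero, which is exactly the lattice-point check you describe in paragraphs one and two. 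Drop the shortcut and carry out the lattice-point verification (or, as the paper does, simply list the eight dual-cone generators and compute their monomials), and your proof is fine.
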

\begin{proof}

Consider the hyperplane 
\begin{equation}
    H := \{ (m, t_1, t_2) \in M_{\mathbb{R}} \oplus \mathbb{R}^2 \ | \ t_1 + t_2 = 1\}
\end{equation}
in $M_{\mathbb{R}} \oplus \mathbb{R}^2$. The cone $|\Sigma_{\nabla,D_a',D_b'}|^\vee$ is given by the cone the over the convex hull of the following 8 points on $H$: 

\begin{equation*}\begin{array}{llll}
( 0,  0,  0,  0,  1, 0, 1), & (-1, -1, -1, -1, -1, 0, 1), & ( 0,  0,  0,  1,  0, 0, 1), & ( 0,  0,  1,  0,  0, 1, 0), \\ ( 0,  1,  0,  0,  0, 1, 0), & ( 0,  0,  0,  0,  0, 0, 1), & ( 1,  0,  0,  0,  0, 1, 0), & ( 0,  0,  0,  0,  0, 1, 0).\;
\end{array}\end{equation*}
Recall that there is a correspondence between points in the dual cone $|\Sigma_{LT,D_a,D_b}|^\vee$ and global functions on $X_{LT,D_a,D_b}$. Note that, when one constructs a superpotential for $X_{LT,D_a,D_b}$ using this correspondence with the 8 points above, one obtains the superpotential $w=u_1Q_{1,\lambda}+u_2Q_{2,\lambda}$. To see the global function on $X_{\nabla,D_a',D_b'}$, it suffices to compute what monomials these 8 points correspond to on $X_{\nabla,D_a',D_b'}$. 
This gives the 8 monomials in $\eqref{eq:SuperpotentialLT}$.
\end{proof}

For our purposes of comparing the Batyrev-Borisov construction with the one by Libgober-Teitelbaum, we choose $c_i=1$ and $\lambda_1=\lambda_2=:\lambda$.

We fix a triangulation $\mathcal{T}$ fulfilling the properties of Proposition \ref{Prop:LTpartiallycpt}.
 Let $X=\C^{14}$ and consider the group $G_\Sigma$ corresponding to the fan $\Sigma_{\nabla,D_a',D_b'}$ with its action on $X$.  
From the triangulation $\mathcal{T}$ we obtain the ideals:

\begin{eqnarray*}
&& \I:=\left\langle \prod_{i\not\in I}x_i\prod_{j\not\in J}u_j\ \Big\vert\  \bigcup_{i\in I}u_{\overline{\rho}_i}\cup\bigcup_{j\in J}u_{\tau_j}\text{ give the set of vertices of a simplex in }\mathcal{T}\right\rangle,\\
&& \J:=\left\langle \prod_{i\not\in I}x_i\ \Big\vert\  \bigcup_{i\in I}u_{\overline{\rho}_i}\cup\bigcup_{j=1}^2u_{\tau_j}\text{ give the set of vertices of a simplex in }\mathcal{T}\right\rangle.\;
\end{eqnarray*}

Before we can apply Proposition $\ref{Prop:FK19Prop4.7}$ and Corollary \ref{Cor:FK19Cor4.8}, we need to ensure the condition $\I\subseteq\sqrt{\partial w,\J}$ holds.
\begin{Lemma}\label{Lemma:LTIdealcontainment}
For any triangulation $\mathcal{T}$ as in Proposition \ref{Prop:LTpartiallycpt}, defining $\I,\J$ and $w$ as above with $\lambda^6\ne 0,1$, we have $\I\subseteq\sqrt{\J,\partial w}$. Therefore, this choice of superpotential fulfills the condition of Proposition $\ref{Prop:FK19Prop4.7}$.
\end{Lemma}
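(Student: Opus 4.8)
The plan is to show that for every monomial generator $m = \prod_{i \notin I} x_i \prod_{j \notin J} u_j$ of $\I$, some power of $m$ lies in the ideal $(\J, \partial w)$. First I would write out the Jacobian ideal $\partial w$ explicitly: differentiating the superpotential $w$ in \eqref{eq:SuperpotentialLT} (with $c_i=1$, $\lambda_1=\lambda_2=\lambda$) gives, among others, the relations $\partial w/\partial u_1 = x_0^3x_6^3 + x_1^3x_7^3 + x_2^3x_8^3 - 3\lambda x_3x_4x_5x_6x_7x_8$, $\partial w/\partial u_2 = x_3^3x_9^3 + x_4^3x_{10}^3 + x_5^3x_{11}^3 - 3\lambda x_0x_1x_2x_9x_{10}x_{11}$, and the $\partial w/\partial x_i$, which for instance give $3u_1 x_0^2 x_6^3 - 3\lambda u_2 x_1 x_2 x_9 x_{10} x_{11}$ (from $x_0$) and $-3\lambda u_1 x_4 x_5 x_6 x_7 x_8 + 3 u_2 x_3^2 x_9^3$ (from $x_3$), etc. I would record, as a first sub-lemma, that modulo $\partial w$ we have $x_0^3x_6^3 \equiv -(x_1^3x_7^3 + x_2^3x_8^3) + 3\lambda x_3x_4x_5x_6x_7x_8$ and similar cyclic relations, and that multiplying the $x_i$-derivatives by the appropriate monomial shows things like $u_1 x_i^3 x_{6+i}^3$ is congruent (mod $\partial w$) to $\lambda$ times a monomial involving $u_2$ and the $x_3,x_4,x_5,x_9,x_{10},x_{11}$ block, so that the hypothesis $\lambda^6 \ne 0,1$ is exactly what makes a resulting linear system in these "block monomials" non-degenerate.

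Next I would use the combinatorial structure of $\mathcal{T}$ from Proposition \ref{Prop:LTpartiallycpt}. A generator of $\I$ coming from a simplex $T \in \mathcal{T}_0$ has the form $\prod_{i \notin I} x_i$ with $I \subset \{0,1,\dots,5\}$, $|I|=5$, and $J = \{1,2\}$ — but these are precisely the generators of $\J$ (since $J$ is forced to be full), so such generators lie in $\J \subseteq (\J, \partial w)$ trivially. So the real content is the generators coming from simplices $T \in \mathcal{T} \setminus \mathcal{T}_0$, and for these I would split into the two cases (A) and (B) of the Proposition, handling (A) in detail and noting (B) is symmetric. In case (A), $S_1, P_6, P_7, P_8 \notin T$ and there is $3 \le j \le 6$ with $P_j, P_{6+j} \notin T$ (here the index convention in the paper has $P_{6+j}$ for $j=3,\dots,5$ meaning $P_9,\dots,P_{11}$; I would be careful about the off-by-one in the range "$3\le j\le 6$" vs. the available indices, reading it as: there is an index pair $\{j, 6+j\}$ with both omitted). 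The corresponding monomial generator of $\I$ is then divisible by $u_1$ (since $S_1 \notin T$ forces $1 \in J^c$... wait — $S_1 \notin T$ means $1 \notin J$, so $u_1$ divides the generator), and also divisible by $x_6 x_7 x_8$ and by $x_j x_{6+j}$.

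The key step is then: using that the generator is divisible by $u_1 x_6 x_7 x_8$ (so that, modulo $\partial w/\partial u_1$, the factor $u_1 x_6 x_7 x_8$ can be traded — after multiplying by enough $x$'s — against $u_1(x_0^3 x_6^3 + x_1^3 x_7^3 + x_2^3 x_8^3)$ up to the factor $3\lambda x_3 x_4 x_5$), reduce the problem to showing a power of $u_1 x_6 x_7 x_8 \cdot (\text{stuff})$ lies in $(\J,\partial w)$; then feed this through the $x_i$-derivative relations to convert $u_1$-monomials into $u_2$-monomials in the complementary block, and finally observe that the block monomials $x_3^3x_9^3, x_4^3x_{10}^3, x_5^3x_{11}^3, x_0x_1x_2x_9x_{10}x_{11}$ that appear, together with $\J$, generate (a power of) everything needed — this is where the triangulation condition that $\J$ is nonzero and the condition $\lambda^6 \ne 0, 1$ (making the relevant $2\times 2$ or $3\times 3$ coefficient determinant nonzero, so the linear combinations can be inverted) both get used. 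I expect the main obstacle to be bookkeeping: organizing the finitely many generator types of $\I \setminus \J$ and, for each, exhibiting an explicit chain of multiplications by monomials and subtractions of Jacobian relations that lands a power of the generator in $(\J, \partial w)$, while keeping the role of $\lambda^6 \ne 0,1$ transparent. The likely clean way to present it is to prove one auxiliary statement — "if $\lambda^6\ne 0,1$ then $x_6^N x_7^N x_8^N \in (\J,\partial w)$ for some $N$, and likewise $x_9^N x_{10}^N x_{11}^N \in (\J,\partial w)$" — from which every case-(A) and case-(B) generator follows immediately, reducing the whole lemma to that single computation plus the trivial $\mathcal{T}_0$ case.
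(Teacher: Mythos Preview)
Your overall architecture matches the paper's: generators from $\mathcal{T}_0$ land in $\J$ trivially, and for $T\in\mathcal{T}\setminus\mathcal{T}_0$ you correctly read off from conditions (A)/(B) that the associated monomial is divisible by $u_1 x_6x_7x_8\cdot x_jx_{6+j}$ (some $j\in\{3,4,5\}$) or by $u_2 x_9x_{10}x_{11}\cdot x_jx_{6+j}$ (some $j\in\{0,1,2\}$). The paper then shows, by an explicit chain of manipulations of the partials (first obtaining $(u_1u_2x_0\cdots x_{11})^3\in\langle\partial w\rangle$ using $\lambda^6\ne 1$, then $(u_2x_0x_1x_2x_9x_{10}x_{11})^6\in\langle\partial w\rangle$, then finally an explicit expression for $(u_2x_0x_6x_9x_{10}x_{11})^4$), that each of these six ``minimal'' monomials already lies in $\sqrt{\partial w}$, without ever invoking $\J$ for them.

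The genuine gap in your proposal is the final ``clean'' auxiliary statement. The claim $x_6^Nx_7^Nx_8^N\in(\J,\partial w)$ is false. At the point $x_0=\cdots=x_5=0$, $x_6=\cdots=x_{11}=1$, $u_1=u_2=1$, every generator of $\J$ vanishes (each contains some $x_i$ with $0\le i\le 5$), both $\partial w/\partial u_j$ vanish, and every $\partial w/\partial x_i$ vanishes (each monomial in $w$ contains some $x_i$ with $0\le i\le 5$ to at least the first power, so each partial still does); yet $x_6x_7x_8=1$. The same point kills the strengthened version $u_1^Nx_6^Nx_7^Nx_8^N$. The factor $x_jx_{6+j}$ with $j\in\{0,\dots,5\}$ that you correctly recorded earlier is \emph{not} disposable: it is precisely what forces the monomial to vanish on this component of $Z(\J,\partial w)$. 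So your reduction to that auxiliary cannot work; you must keep the full monomial $u_1x_6x_7x_8x_jx_{6+j}$ (resp.\ its case-(B) analogue) and carry it through the Jacobian computation, as the paper does.
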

\begin{proof}

To show the containment $\I\subseteq\sqrt{\partial w,\J}$, we prove that all the generators of $\I$ are in $\sqrt{\partial w,\J}$.
The ideal $\I$ is, by definition, generated by the monomials which correspond to the simplices in the triangulation $\mathcal{T}$. For a simplex $T\in\mathcal{T}_0$, both $S_1$ and $S_2$ are vertices. Thus, by definition, the monomial associated to $T$ is in $\J$ and hence in $\sqrt{\partial w,\J}$.

For any simplex $T\in \mathcal{T}\setminus\mathcal{T}_0$, either condition $(A)$ or $(B)$ of Proposition \ref{Prop:LTpartiallycpt} holds. We claim that the monomial associated to a simplex $T$ fulfilling either of those two conditions is an element of $\sqrt{\partial w}$ and therefore an element of $\sqrt{\partial w, \J}$.

We note that if $T\in\mathcal{T}\setminus\mathcal{T}_0$ fulfills condition $(A)$, i.e. does not contain any of the points $S_1.P_6,P_7,P_8$ and there is a pair of points of the form $P_j,P_{6+j}$ with $3\le j\le 5$ also not contained, then by definition $u_1x_jx_{6+j}x_6x_7x_8$ divides the monomial associated to $T$. Similarly, if $T$ fulfilled condition $(B)$ instead, $u_2x_jx_{6+j}x_9x_{10}x_{11}$ (for some $0\le j\le 2$) would divide the monomial generator of $\I$ associated to $T$.

To show that any monomial associated to a simplex in $\mathcal{T}\setminus\mathcal{T}_0$ is in $\sqrt{\partial w}\subseteq\sqrt{\partial w,\J}$, it is thus sufficient to prove that the six monomials $u_2x_0x_{6}x_9x_{10}x_{11},$  $u_2x_1x_{7}x_9x_{10}x_{11},$ $u_2x_2x_{8}x_9x_{10}x_{11},$ $u_1x_3x_{9}x_6x_7x_8,$ $u_1x_4x_{10}x_6x_7x_8$ and $u_1x_5x_{11}x_6x_7x_8$ are elements of $\sqrt{\partial w}$.

By symmetry of the $x_i$ in $w$, we note that it is sufficient to show that $u_2x_0x_6x_9x_{10}x_{11}\in\sqrt{\partial w}$.
Start by explicitly writing down the ideal $\langle\partial w\rangle$, i.e. the ideal generated by the partial derivatives of $w$.

\begin{equation*}\begin{array}{ll}
\langle \partial w\rangle=& \langle 3u_1x_0^2x_6^3-3\lambda u_2x_1x_2x_9x_{10}x_{11}, 3u_1x_1^2x_7^3-3\lambda u_2x_0x_2x_9x_{10}x_{11},\\ & 3u_1x_2^2x_8^3-3\lambda u_2x_0x_1x_9x_{10}x_{11},
 3u_2x_3^2x_9^3-3\lambda u_1x_4x_5x_6x_7x_8,\\ & 3u_2x_4^2x_{10}^3-3\lambda u_1x_3x_5x_6x_7x_8, 3u_2x_5^2x_{11}^3-3\lambda u_1x_3x_4x_6x_7x_8,\\
& 3u_1x_0^3x_6^2-3\lambda u_1x_3x_4x_5x_7x_8, 3u_1x_1^3x_7^2-3\lambda u_1x_3x_4x_5x_6x_8,\\  &  3u_1x_2^3x_8^2-3\lambda u_1x_3x_4x_5x_6x_7,
 3u_2x_3^3x_9^2-3\lambda u_2x_0x_1x_2x_{10}x_{11},\\ & 3u_2x_4^3x_{10}^2-3\lambda u_2x_0x_1x_2x_9x_{11}, 3u_2x_5^3x_{11}^2-3\lambda u_2x_0x_1x_2x_9x_{10},\\
& x_0^3x_6^3+x_1^3x_7^3+x_2^3x_8^3-3\lambda x_3x_4x_5x_6x_7x_8,  x_3^3x_9^3+x_4^3x_{10}^3+x_5^3x_{11}^3-3\lambda x_0x_1x_2x_9x_{10}x_{11}\rangle\;
\end{array}\end{equation*}

 We see that $3u_1x_i^2x_{6+i}^3-3\lambda u_2\frac{x_0x_1x_2x_9x_{10}x_{11}}{x_i} \in \langle \partial w\rangle$ for $0\le i\le 2$. Notice that since $ac-bd=c(a-b)+b(c-d)$, if $a-b,c-d$ are elements in an ideal, then so is $ac-bd$.
 Hence by iterating this we obtain that \[
27 u_1^3x_0^2x_1^2x_2^2x_6^3x_7^3x_8^3-27\lambda^3u_2^3x_0^2x_1^2x_2^2x_9^3x_{10}^3x_{11}^3\in\langle\partial w\rangle.
\]
Similarly,
\[
27u_2^3x_3^2x_4^2x_5^2x_9^3x_{10}^3x_{11}^3-27\lambda^3u_1^3x_3^2x_4^2x_5^2x_6^3x_7^3x_8^3\in\langle \partial w\rangle.
\]
Therefore,
\begin{eqnarray}\label{eqn:IdealProof1}
&& (27)^2u_1^3u_2^3x_0^2\dots x_5^2x_6^3\dots x_{11}^3-(27)^2\lambda^6 u_1^3u_2^3x_0^2\dots x_5^2x_6^3\dots x_{11}^3\in\langle \partial w\rangle\nonumber\\
&& \Rightarrow 27^2(1-\lambda^6)u_1^3u_2^3x_0^2\dots x_5^2x_6^3\dots x_{11}^3\in\langle\partial w\rangle\nonumber\\
&& \Rightarrow u_1^3u_2^3x_0^2\dots x_5^2x_6^3\dots x_{11}^3\in\langle\partial w\rangle\nonumber\\
&& \Rightarrow (u_1u_2x_0\dots x_{11})^3\in\langle \partial w\rangle.\;
\end{eqnarray}

Consider $\frac{\partial w}{\partial u_1}$, giving
\begin{equation}\label{eq:IdealProof2}
    x_0^3x_6^3+x_1^3x_7^3+x_2^3x_8^3-3\lambda x_3x_4x_5x_6x_7x_8\in\langle \partial w\rangle.
\end{equation}
Furthermore, we note that $\sum_{i=0}^2 x_i\frac{\partial w}{\partial x_i}\in\langle \partial w\rangle$, and thus
\begin{equation}\label{eq:IdealProof3}
    \sum_{i=0}^2(3u_1x_i^3x_{6+i}^3-3\lambda u_2x_0x_1x_2x_9x_{10}x_{11})\in\langle\partial w\rangle.
\end{equation}
By \eqref{eq:IdealProof2} we have that $x_3x_4x_5x_6x_7x_8 + \langle \partial w\rangle = \frac{1}{3\lambda} (x_0^3x_6^3 + x_1^3 x_7^3 + x_2^3x_8^3) + \langle \partial w\rangle $. We use this to substitute into \eqref{eqn:IdealProof1} to obtain that
\[
 \frac{1}{27\lambda^3} u_2^3x_0^3x_1^3x_2^3x_9^3x_{10}^3x_{11}^3(u_1(x_0^3x_6^3+x_1^3x_7^3+x_2^3x_8^3))^3\in\langle \partial w\rangle
\]

Performing the same style of substitution with \eqref{eq:IdealProof3}, we obtain

\[
 u_2^3x_0^3x_1^3x_2^3x_9^3x_{10}^3x_{11}^3u_2^3x_0^3x_1^3x_2^3x_9^3x_{10}^3x_{11}^3=(u_2x_0x_1x_2x_9x_{10}x_{11})^6\in\langle\partial w\rangle
 \]
Thus $u_2x_0x_1x_2x_9x_{10}x_{11}\in\sqrt{\partial w}$.

By comparing the elements $x_2\frac{\partial w}{\partial x_2}, u_2x_0x_1x_2x_9x_{10}x_{11}\in\sqrt{\partial w}$, we obtain that $u_1x_2^3x_8^3\in\sqrt{\partial w}$, implying $u_1x_2x_8\in\sqrt{\partial w}$. This, in turn, implies that $u_2x_0x_1x_9x_{10}x_{11}\in\sqrt{\partial w}$, by inspection of $\frac{\partial w}{\partial x_2}\in\langle\partial w\rangle\subseteq \sqrt{\partial w}$. Similarly, $u_2x_0x_2x_9x_{10}x_{11}\in\sqrt{\partial w}$. We also have $u_2x_5x_{11}\in\sqrt{\partial w}$ by an analogous computation. Finally, $\frac{\partial w}{\partial u_1}=x_0^3x_6^3+x_1^3x_7^3+x_2^3x_8^3-3\lambda x_3x_4x_5x_6x_7x_8\in\sqrt{\partial w}$.

Therefore, one can intuit and then compute that

\begin{equation}\begin{aligned}
 (u_2x_0x_6x_9x_{10}x_{11})^4 &= -u_2^3x_1^2x_6x_7^3x_9^3x_{10}^3x_{11}^3\cdot(u_2x_0x_1x_9x_{10}x_{11}) \\
    & \quad - u_2^3x_2^2x_6x_8^3x_9^3x_{10}^3x_{11}^3\cdot (u_2x_0x_2x_9x_{10}x_{11}) \\
    & \quad + 3\lambda u_2^3x_0x_3x_4x_6^2x_7x_8x_9^4x_{10}^4x_{11}^4\cdot (u_2x_5x_{11}) \\
    &\quad + u_2^4x_0x_6x_9^4x_{10}^4x_{11}^4\cdot (x_0^3x_6^3+x_1^3x_7^3+x_2^3x_8^3-3\lambda x_3x_4x_5x_6x_7x_8)\in  \sqrt{\partial w} 
\end{aligned}    
\end{equation}

Thus, we have shown that $u_2x_0x_6x_9x_{10}x_{11}\in\sqrt{\partial w}$.  By symmetry, any simplex fulfilling properties $(A)$ or $(B)$ corresponds to a monomial in $\sqrt{\partial w,\J}$. Hence any monomial associated to a simplex $T\in\mathcal{T}\setminus\mathcal{T}_0$ is an element of $\sqrt{\partial w,\J}$, concluding the proof that $\I\subseteq\sqrt{\partial w,\J}$.
\end{proof}

\begin{Corollary}\label{Cor:ChamberLT}
Consider the GKZ fan of $\tot\left(\O_{X_\nabla}(-D_b')\oplus\O_{X_\nabla}(-D_a')\right)$ and the group $G_\Sigma$ from above. There is a chamber $\sigma_p$ with affine open $U_p$ such that:
\begin{enumerate}
    \item[(i)] $[U_p/G_{\Sigma}]$ is a partial compactification of $\tot\left(\O_{\mathcal{X}_{LT}}(-D_b)\oplus\O_{\mathcal{X}_{LT}}(-D_a)\right)$.
    \item[(ii)] There is a superpotential corresponding to the eight points in $|\Sigma_{\nabla,D_a',D_b'}|^\vee\cap H$ taking the form $w=u_1(x_0^3x_6^3+x_1^3x_7^3+x_2^3x_8^3-3\lambda x_3x_4x_5x_6x_7x_8)+u_2(x_3^3x_9^3+x_4^3x_{10}^3+x_5^3x_{11}^3-3\lambda x_0x_1x_2x_9x_{10}x_{11})$.
    \item[(iii)] With $\I_p,\J_p$ as defined in $\S \ref{sec:FK}$, we have $\I_p\subseteq\sqrt{\partial w,\J_p}$.
\end{enumerate}
\end{Corollary}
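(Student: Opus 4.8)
The plan is to assemble the corollary from the three results already established in this section. I would take $\sigma_p := \sigma_{LT}$, the chamber of the GKZ fan of $\tot\left(\O_{X_\nabla}(-D_b')\oplus\O_{X_\nabla}(-D_a')\right)$ furnished by Proposition~\ref{Prop:LTpartiallycpt}, let $\mathcal{T}$ be its associated triangulation of $\mathfrak{P}$, and let $U_p = X\setminus Z(\I_p)$ be the corresponding affine open as defined in $\S\ref{sec:FK}$.

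For (i): Proposition~\ref{Prop:LTpartiallycpt} already gives that the toric variety attached to $\sigma_{LT}$ is a partial compactification of $\tot(\O_{X_{LT}}(-D_b)\oplus\O_{X_{LT}}(-D_a))$, realised through the inclusion of the fan $\Sigma_{LT,D_a,D_b}$ of Corollary~\ref{Lem:totalspaceLT} into the fan of $\sigma_{LT}$. To upgrade this to stacks, I would note that the first property of $\mathcal{T}$ in Proposition~\ref{Prop:LTpartiallycpt} (that $\mathcal{T}$ contains $\mathcal{T}_0$, whose simplices are exactly those of $\mathcal{T}$ using both bundle rays $S_1,S_2$) says precisely that $\Sigma_{LT,D_a,D_b}$ is a subfan of the fan of $\sigma_{LT}$; hence its Cox open set is a $G_\Sigma$-invariant open subscheme of $U_p$, and the $G_\Sigma$-action restricts to the action that defines $\tot(\O_{\mathcal{X}_{LT}}(-D_b)\oplus\O_{\mathcal{X}_{LT}}(-D_a))$ (using Proposition~\ref{Prop:repeatProp7.3.1} to identify this total space and Proposition~\ref{Prop:FanforP5G81} to identify the relevant quotient group as $\C^\ast\times G_{81}$). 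Passing to quotient stacks then gives the asserted open immersion into $[U_p/G_\Sigma]$.

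For (ii): this is Lemma~\ref{Lem:sectioninBB} together with the normalisation $c_i=1$, $\lambda_1=\lambda_2=\lambda$ fixed immediately after it — the eight lattice points of $|\Sigma_{\nabla,D_a',D_b'}|^\vee\cap H$ are exactly the ones corresponding to $u_1Q_{1,\lambda}+u_2Q_{2,\lambda}$ on $X_{LT,D_a,D_b}$, and reinterpreting the same points as monomials on $X_{\nabla,D_a',D_b'}$ yields $w$ in the stated form. I would also record that $w$ satisfies the superpotential hypotheses of $\S\ref{sec:FK}$: writing $\gamma_j$ for the character of $G_\Sigma$ by which it acts on $u_j$, the polynomials $f_1 = x_0^3x_6^3+x_1^3x_7^3+x_2^3x_8^3-3\lambda x_3x_4x_5x_6x_7x_8$ and $f_2 = x_3^3x_9^3+x_4^3x_{10}^3+x_5^3x_{11}^3-3\lambda x_0x_1x_2x_9x_{10}x_{11}$ are $G_\Sigma$-semi-invariant with respect to $\gamma_j^{-1}$ (implicit in the computation of Lemma~\ref{Lem:sectioninBB}), so $w=u_1f_1+u_2f_2$ is $G_\Sigma$-invariant and of R-charge degree $1$.

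For (iii): by the identifications recalled in $\S\ref{sec:secfans}$, the elements $\nu_i(G_\Sigma)$ are exactly the primitive generators $u_{\overline{\rho_i}}$ and $\nu_{12+j}(G_\Sigma)=u_{\tau_j}$, and since the chamber $\sigma_p=\sigma_{LT}$ corresponds to the triangulation $\mathcal{T}$, the ideals $\I_p,\J_p$ of $\S\ref{sec:FK}$ are literally the ideals $\I,\J$ built in the preceding subsection; thus (iii) is precisely Lemma~\ref{Lemma:LTIdealcontainment}, which holds under the hypothesis $\lambda^6\ne 0,1$. No step here is genuinely hard: the only thing requiring care is the bookkeeping that matches the abstract GKZ/VGIT setup of $\S\ref{sec:FK}$ — the group $S=G_\Sigma$ acting on $\A^{14}$, the R-charge, the chamber open sets, and the ideals $\I_p,\J_p$ — with the concrete fan $\Sigma_{\nabla,D_a',D_b'}$ and its chamber $\sigma_{LT}$, and in particular the stacky strengthening of Proposition~\ref{Prop:LTpartiallycpt}, which as stated concerns only the coarse toric varieties.
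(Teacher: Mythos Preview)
Your proposal is correct and follows essentially the same approach as the paper, which simply cites Proposition~\ref{Prop:LTpartiallycpt} for (i), Lemma~\ref{Lem:sectioninBB} for (ii), and Lemma~\ref{Lemma:LTIdealcontainment} for (iii). Your additional bookkeeping---the stacky upgrade of (i), the semi-invariance check for $w$, and the identification of $\I_p,\J_p$ with the ideals $\I,\J$---is more careful than what the paper writes, but does not differ in substance.
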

\begin{proof}
Proposition $\ref{Prop:LTpartiallycpt}$ proves $(i)$, Lemma $\ref{Lem:sectioninBB}$ proves $(ii)$ and finally Lemma $\ref{Lemma:LTIdealcontainment}$ shows $(iii)$.
\end{proof}

\subsection{Relating $X_{\nabla}$ and $X_{LT}$}

Recall that the partial compactification of the total space $\tot\left(\O_{X_{LT}}(-D_b)\oplus\O_{X_{LT}}(-D_a)\right)$ in Corollary $\ref{Cor:ChamberLT}$ corresponds to a chamber $\sigma_p$ of the GKZ fan of $\tot(\O_{X_\nabla}(-D_b')\oplus\O_{X_\nabla}(-D_a'))$. We then know that it is birationally equivalent to $\tot(\O_{X_\nabla}(-D_b')\oplus\O_{X_\nabla}(-D_a'))$. 

Thus we want to now explicitly find a triangulation of $\mathfrak{P}$ corresponding to the Batyrev-Borisov mirror family. There, the  superpotential will take the form $$w=u_1(x_0^3x_6^3+x_1^3x_7^3+x_2^3x_8^3-3\lambda x_3x_4x_5x_6x_7x_8)+u_2(x_3^3x_9^3+x_4^3x_{10}^3+x_5^3x_{11}^3-3\lambda x_0x_1x_2x_9x_{10}x_{11}).$$ Note that, by Lemma \ref{Lem:sectioninBB}, this is the form the superpotential should take in the Batyrev-Borisov mirror. In other words, we need a chamber $\sigma_q$ in the GKZ fan corresponding to $\tot(\O_{X_\nabla}(-D_b')\oplus\O_{X_\nabla}(-D_a'))$, where a general section of $\O_{X_\nabla}(-D_b')\oplus\O_{X_\nabla}(-D_a')$ will yield a complete intersection in $X_\nabla$, and thus a Batyrev-Borisov mirror.

\begin{Lemma}\label{Lem:TriangExistsForBB}
Consider the GKZ fan of $\tot\left(\O_{X_\nabla}(-D_b')\oplus\O_{X_\nabla}(-D_a')\right)$ and recall the group $G_\Sigma$ from above. There is a chamber $\sigma_q$ with affine open $U_q$ such that: \begin{enumerate}
    \item[(i)] $[U_q/G_\Sigma]=\tot\left(\O_{\mathcal{X}_\nabla}(-D_b')\oplus\O_{\mathcal{X}_\nabla}(-D_a')\right)$.
    \item[(ii)] A superpotential corresponding to the eight lattice points of $|\Sigma_{\nabla,D_a',D_b'}|^\vee\cap H$ is of the form $
    w=u_1(x_0^3x_6^3+x_1^3x_7^3+x_2^3x_8^3-3\lambda x_3x_4x_5x_6x_7x_8)+u_2(x_3^3x_9^3+x_4^3x_{10}^3+x_5^3x_{11}^3-3\lambda x_0x_1x_2x_9x_{10}x_{11})$.
    \item[(iii)] For $\I_q,\J_q$ as defined in $\S\ref{sec:FK}$, $\I_q\subseteq\sqrt{\partial w,\J_q}$.
\end{enumerate}

\end{Lemma}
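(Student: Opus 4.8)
The plan is to take $\sigma_q$ to be the chamber of the GKZ fan of $\tot\left(\O_{X_\nabla}(-D_b')\oplus\O_{X_\nabla}(-D_a')\right)$ whose associated regular triangulation $\mathcal{T}_q$ of $\mathfrak{P}=\conv(\mathfrak{C})$ is the one read off from the fan $\Sigma_{\nabla,D_a',D_b'}$ of Corollary \ref{Cor:TotXnabla}: for each of the $42$ maximal cones $\rho_{i_1}\cdots\rho_{i_5}$ in Table \ref{Tab:irrel}, the set $\{P_{i_1},\dots,P_{i_5},S_1,S_2\}$ is a maximal simplex, together with all of its faces. This is genuinely a full triangulation of $\mathfrak{P}$ and all its cones are simplicial, since $\Sigma_\nabla$ is simplicial. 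Given this choice, (i) follows from the chamber--fan dictionary of \S\ref{sec:secfans}--\S\ref{sec:FK} together with Corollary \ref{Cor:TotXnabla} and Proposition \ref{Prop:repeatProp7.3.1}; (ii) is a restatement of Lemma \ref{Lem:sectioninBB}; and (iii) is immediate from a structural feature of $\mathcal{T}_q$. The only point requiring real work is checking that $\mathcal{T}_q$ is \emph{regular}, so that it is an honest chamber and not merely a face of one.

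For that I would argue as follows. Every maximal cone of $\Sigma_{\nabla,D_a',D_b'}$ is obtained from a maximal cone of $\Sigma_\nabla$ by adjoining the two bundle rays $\tau_1,\tau_2$, and $\Sigma_\nabla$ is the MPCP-desingularization fixed in Proposition \ref{Prop:fanBBmirror}; by Remark \ref{Rem:BB} it is \emph{projective}, i.e.\ the underlying triangulation of $\conv(u_{\rho_0},\dots,u_{\rho_{11}})\subseteq M_{\R}$ is coherent and carried by a strictly convex support function. Pulling this function back and extending it by a suitable convex function in the two extra coordinates yields a strictly convex support function on $\mathfrak{P}$ realising $\mathcal{T}_q$, so $\mathcal{T}_q$ is regular and corresponds to a chamber $\sigma_q$ by Theorem \ref{Thm:chambers2nd}. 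Alternatively, regularity of $\mathcal{T}_q$ can be verified directly in TOPCOM, just as for Proposition \ref{Prop:LTpartiallycpt}. Once $\sigma_q$ is a chamber, its affine open $U_q=X\setminus Z(\I_q)$ is exactly the Cox open set $U_{\Sigma_{\nabla,D_a',D_b'}}$, whence $[U_q/G_\Sigma]\cong\mathcal{X}_{\Sigma_{\nabla,D_a',D_b'}}\cong\tot\left(\O_{\mathcal{X}_\nabla}(-D_b')\oplus\O_{\mathcal{X}_\nabla}(-D_a')\right)$ by Corollary \ref{Cor:TotXnabla} and Proposition \ref{Prop:repeatProp7.3.1}, giving (i). I expect this regularity step to be the main obstacle; everything else is formal bookkeeping.

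For (ii), recall that the superpotential is the global function $w$ on $X=\C^{14}$ attached to the eight lattice points of $|\Sigma_{\nabla,D_a',D_b'}|^\vee\cap H$; this is precisely the computation already performed in Lemma \ref{Lem:sectioninBB}, and after the normalisation $c_i=1$, $\lambda_1=\lambda_2=\lambda$ fixed immediately after that lemma, $w$ takes the stated form. Since $w$ depends only on the fixed $G_\Sigma$-action on $\spec\C[u_1,u_2]$ and not on the chamber, it is literally the same function $w$ appearing in Corollary \ref{Cor:ChamberLT}(ii), so there is nothing further to check.

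For (iii), I would use the fact that \emph{every} maximal simplex of $\mathcal{T}_q$ contains both $S_1$ and $S_2$. The monomial attached to a face of a simplex is a multiple of the one attached to the simplex, so $\I_q$ is generated by the monomials coming from the maximal simplices alone; for each of these $J=\{1,2\}$, so the generator is $\prod_{i\notin I}x_i$ with no $u$-factor, which is exactly a generator of $\J_q$. Hence in fact $\I_q=\J_q$; in particular $\J_q\neq 0$ (the hypothesis one needs later when combining this lemma with Corollary \ref{Cor:FK19Cor4.8}), and the containment $\I_q\subseteq\sqrt{\partial w,\J_q}$ is trivial. Unlike on the Libgober--Teitelbaum side, no Jacobian-ideal computation in the style of Lemma \ref{Lemma:LTIdealcontainment} is needed here.
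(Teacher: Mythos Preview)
Your proposal is correct and follows essentially the same approach as the paper: the chamber $\sigma_q$ is taken to be the one whose triangulation consists of the $42$ simplices $\{P_{i_1},\dots,P_{i_5},S_1,S_2\}$ coming from Table~\ref{Tab:irrel}, (ii) is Lemma~\ref{Lem:sectioninBB}, and (iii) follows because every maximal simplex contains $S_1,S_2$ so that $\I_q=\J_q$. The only minor difference is in the regularity check: the paper argues via the star subdivision at $S_1,S_2$ (weights $1$ on $S_1,S_2$, weight $2$ on the $P_i$, then refine) with a \texttt{SAGE} fallback, whereas you invoke the projectivity of the MPCP fan $\Sigma_\nabla$ to produce a strictly convex support function and extend it in the bundle directions---your route is arguably cleaner since it pins down the specific refinement, but both arrive at the same conclusion.
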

\begin{proof}
This proof will construct the triangulation $\mathcal{T}_q$ corresponding to the chamber $\sigma_q$. 
We consider the 42 maximal cones from Table $\ref{Tab:irrel}$. For each of those cones $ \sigma_i, 1\le i\le 42$, we associate a simplex given as convex hull of the 5 vertices corresponding to the 5 rays of $\sigma_i$ plus the two vertices corresponding to the bundle coordinates, i.e. $(0,0,0,0,0,1,0)$ and $(0,0,0,0,0,0,1)$. So for example the first cone, with rays $\rho_0,\rho_1,\rho_2,\rho_9,\rho_{10}$, will correspond to the simplex with vertices $(3,0,0,-1,-1,1,0)$, $(0,3,0,-1,-1,1,0)$, $(0,0,3,-1,-1,1,0)$, $(0,0,0,0,0,1,0)$, $(0,0,0,0,0,0,1)$, $(0,0,0,2,-1,1,0)$, $(0,0,0,-1,2,1,0)$. 
Another way to formulate this is that we take the star subdivision of the cones from Table $\ref{Tab:irrel}$ on the two bundle points $S_1,S_2$.

Regularity of this triangulation of the 14 points is an easy consequence of its construction as a star subdivision, hence it corresponds to some chamber $\sigma_q$ in the GKZ-fan. Indeed, the star subdivision can be obtained by giving the points $S_1,S_2$ a weight of 1 and giving all other points the same weight of $w=2$ and then refining the resulting regular polyhedral subdivision into a triangulation. Alternatively, one can check the regularity of this triangulation by using \texttt{SAGE}.

The third item follows from the fact that we do not partially compactify, hence $\J_q=\I_q$ and therefore $\I_q\subseteq\sqrt{\partial w,\J_q}$, as required.
\end{proof}

We now have all the necessary tools to prove the main result of this paper, Theorem \ref{Thm:MyThmInIntro}.

\begin{proof}[Proof of Theorem \ref{Thm:MyThmInIntro}]
Recall the chambers $\sigma_p$ and $\sigma_q$ in the GKZ fan of the toric variety $\tot\left(\O_{X_\nabla}(-D_b')\oplus\O_{X_\nabla}(-D_a')\right)$ given in Corollary $\ref{Cor:ChamberLT}$ and Lemma $\ref{Lem:TriangExistsForBB}$. By applying Corollary $\ref{Cor:FK19Cor4.8}$, we have $\dbcoh{\mathcal{Z}_{\lambda}}\cong\dbcoh{\mathcal{V}_{LT,\lambda}}$, as required.
\end{proof}

We note that analogous computations to the ones displayed in this paper can yield the following result in lower dimension.
\begin{Theorem}\label{Thm:n=2}
Let $Q_{1}=x_1^2+x_2^2-x_3x_4,\ Q_{2}=x_3^2+x_4^2-x_1x_2$ and let $p_1=x_1^2x_5^2+x_2^2x_6^2-x_3x_4x_5x_6,\ p_2=x_3^2x_7^2+x_4^2x_8^2-x_1x_2x_7x_8$. We define the group $G_4\subseteq PGL(3,\C)$ given by the four automorphisms
\[
\operatorname{diag}(1,1,1,1),\  \operatorname{diag}(\zeta_{8},-\zeta_{8},-\zeta_{8}^{-1},\zeta_{8}^{-1}),\  \operatorname{diag}(\zeta_4,\zeta_4,\zeta_4^{-1},\zeta_4^{-1}),\  \operatorname{diag}(\zeta_8^3,-\zeta_8^3,-\zeta_8^{-3},\zeta_8^{-3}),
\]
 where $\zeta_k$ is a primitive $k^{th}$ root of unity. 

The Batyrev-Borisov mirror to $Z(Q_{1},Q_{2})\subseteq \P^3$ can be computed to be a complete intersection $\mathcal{Z}_2$ in a 3-dimensional toric stack $\mathcal{X}_{BB}$ given as the zero locus $Z_2=Z(p_1,p_2)\subseteq \mathcal{X}_{BB}$.
Take the stacky complete intersection $\mathcal{V}_2 := Z(Q_{1},Q_{2})\subseteq [(\C^4\setminus\{0\})/(\C^\ast\times G_4)]$. Then\[
\dbcoh{\mathcal{V}_2}\cong\dbcoh{\mathcal Z_2}.
\]
\end{Theorem}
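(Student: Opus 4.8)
The plan is to run the same pipeline that proves Theorem~\ref{Thm:MyThmInIntro}, now for two quadrics in $\P^3$. First I would assemble the toric data on the Batyrev--Borisov side: start from the anticanonical polytope $\Delta_{-K_{\P^3}}$ with the nef partition coming from the divisor classes $T_0+T_1$ and $T_2+T_3$ of $\P^3$, compute the dual nef partition $\nabla_1,\nabla_2\subseteq N_{\R}$ and its Minkowski sum $\nabla$, and take an MPCP-desingularization of the toric variety of the normal fan of $\nabla$ to obtain $X_\nabla=:X_{BB}$ together with the torus-invariant divisors $D_a',D_b'$ built from the two parts of the nef partition. Applying Proposition~\ref{prop:7.3.1CLS} twice (legitimate by Proposition~\ref{Prop:repeatProp7.3.1}) produces a fan $\Sigma_{\nabla,D_a',D_b'}$ for $\tot(\O_{X_\nabla}(-D_b')\oplus\O_{X_\nabla}(-D_a'))$, exactly as in Corollary~\ref{Cor:TotXnabla}; a count of the polynomials $p_1,p_2$ shows this fan has eight ``base'' rays and two ``bundle'' rays, so its Cox affine space is $\C^{10}$ with coordinates $x_1,\dots,x_8,u_1,u_2$. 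On the Libgober--Teitelbaum side, the analogue of Proposition~\ref{Prop:FanforP5G81} is obtained by running the Cox construction of $\S\ref{coxconstruction}$ for the fan associated to $\P^3/G_4$: solve the system produced by Lemma~\ref{Lem:5.1.1CLS} to identify the group as $\C^\ast\times G_4$ with $G_4$ the order~$4$ group listed in the statement, and then reproduce Corollary~\ref{Lem:totalspaceLT} to build a fan for $\tot(\O_{X_{LT}}(-D_b)\oplus\O_{X_{LT}}(-D_a))$ carrying the global section $Q_1\oplus Q_2$.

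The core of the argument is to realise both total spaces as chambers of one GKZ fan and verify the hypotheses of Corollary~\ref{Cor:FK19Cor4.8}. I would form the GKZ fan of the action of the group $G_\Sigma$ associated to $\Sigma_{\nabla,D_a',D_b'}$ on $\C^{10}$, whose chambers correspond (Theorem~\ref{Thm:chambers2nd}) to regular triangulations of the convex hull $\mathfrak{P}$ of the eight base-ray generators together with the two bundle points $S_1,S_2$. The Batyrev--Borisov chamber $\sigma_q$ is produced exactly as in Lemma~\ref{Lem:TriangExistsForBB}: take the star subdivision of the MPCP cones on $S_1,S_2$; regularity is automatic from the weight-function description, one has $\J_q=\I_q$, and by the computation of Lemma~\ref{Lem:sectioninBB} the superpotential attached to the eight lattice points of $|\Sigma_{\nabla,D_a',D_b'}|^\vee$ on the relevant affine hyperplane is the claimed $w=u_1(x_1^2x_5^2+x_2^2x_6^2-x_3x_4x_5x_6)+u_2(x_3^2x_7^2+x_4^2x_8^2-x_1x_2x_7x_8)$. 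For the Libgober--Teitelbaum chamber $\sigma_p$ one needs the two-dimensional analogue of Proposition~\ref{Prop:LTpartiallycpt}: an explicit regular polyhedral subdivision of $\mathfrak{P}$ that contains the simplices of the star subdivision of $\conv(P_0,\dots,P_7,S_1,S_2)$ (the $\mathcal{T}_0$ analogue), built by assigning weight $1$ to $S_1,S_2$ and a larger equal weight to the remaining $P_j$, computing the lower facets of the lifted polyhedron, proving — by the same convexity and intermediate-value argument as in Lemmas~\ref{Lem:inequalitybounds} and~\ref{Lem:Polysubdivision} — that the projected facets cover $\mathfrak{P}$, and then refining (Step~2 of that proof, using the secondary polytope) to a regular triangulation $\mathcal{T}$ all of whose remaining simplices satisfy divisibility conditions analogous to $(A)$/$(B)$. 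This realizes $[U_p/G_\Sigma]$ as a partial compactification of $\tot(\O_{\mathcal{X}_{LT}}(-D_b)\oplus\O_{\mathcal{X}_{LT}}(-D_a))$, the analogue of Corollary~\ref{Cor:ChamberLT}(i)--(ii).

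The last ingredient is the Jacobian-ideal containment $\I_p\subseteq\sqrt{\partial w,\J_p}$, the analogue of Lemma~\ref{Lemma:LTIdealcontainment}: simplices meeting the $\mathcal{T}_0$-part give monomials already in $\J_p$, while for each remaining simplex one of the conditions $(A)$/$(B)$ forces the associated monomial to be divisible by one of finitely many ``boundary'' monomials of the shape $u_2x_ix_{2+i}x_7x_8$ or $u_1x_jx_{2+j}x_5x_6$, and one shows each of these lies in $\sqrt{\partial w}$ by manipulating the partial derivatives of $w$ with the identity $ac-bd=c(a-b)+b(c-d)$ to produce a high power of $u_1u_2x_1\cdots x_8$ in $\langle\partial w\rangle$ and then dividing down; this is precisely the step where the specific coefficients of $Q_1,Q_2$ matter (the relevant nonvanishing constant is the degree-two counterpart of $1-\lambda^6$, which one checks is nonzero for these polynomials). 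With both chambers in hand, condition $(iii)$ verified on each side, and $G_\Sigma$ satisfying the quasi-Calabi--Yau hypothesis since $X_\nabla$ is Calabi--Yau, Corollary~\ref{Cor:FK19Cor4.8} gives $\dbcoh{\mathcal{V}_2}\cong\dbcoh{\mathcal{Z}_2}$. I expect the main obstacles to be the explicit construction of the Libgober--Teitelbaum chamber $\sigma_p$ (the weight-function bookkeeping and the covering argument) and the radical-ideal computation; everything else is a faithful transcription of the $\P^5$ proof to $\P^3$, and as in the proof of Proposition~\ref{Prop:LTpartiallycpt} both checks can be double-checked with a short \texttt{SAGE} program \cite{sagemath} using \texttt{TOPCOM} \cite{TOPCOM}.
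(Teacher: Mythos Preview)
Your proposal is correct and matches the paper exactly: the paper gives no proof of Theorem~\ref{Thm:n=2} beyond the remark that it follows from ``analogous computations to the ones displayed in this paper,'' and you have outlined precisely those computations (the $\P^3$ versions of Propositions~\ref{Prop:fanBBmirror}, \ref{Prop:FanforP5G81}, \ref{Prop:LTpartiallycpt}, Lemmas~\ref{Lem:sectioninBB}, \ref{Lemma:LTIdealcontainment}, \ref{Lem:TriangExistsForBB}, and the final appeal to Corollary~\ref{Cor:FK19Cor4.8}). One small indexing slip: since $x_i$ pairs with $x_{4+i}$ in $p_1,p_2$, your boundary monomials should read $u_2x_ix_{4+i}x_7x_8$ and $u_1x_jx_{4+j}x_5x_6$ rather than $x_{2+i}$, and the nonvanishing constant replacing $1-\lambda^6$ computes to $16-1=15$.
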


\begin{remark}\label{Rem:Gen}
One can aim to generalise this to higher dimensions by looking at the zero-set of the two polynomials $$Q_{1,n}=x_1^n+\dots+x_n^n-x_{n+1}\dots x_{2n} \text{ and } Q_{2,n}=x_{n+1}^n+\dots+x_{2n}^n-x_1\dots x_n$$ in $\P^{2n-1}$.

Unfortunately, $Z(Q_{1,n},Q_{2,n})\subseteq \P^{2n-1}$ is itself singular for $n\geq 4$, which poses problems for the required ideal containment condition $\I\subseteq\sqrt{\partial w,\J}$ to hold. However, using these methods of VGIT is still interesting in the context of categorical resolutions. Indeed, the direct generalisation of the Libgober-Teitelbaum construction above can be categorically resolved. This technique and its generalisations are a subject of future work.
\end{remark}

\begin{remark}
The notion of $f$-duality introduced by Rossi in \cite{Rossi20} and \cite{Rossi21} gives an efficient method of computing and extending the Batyrev-Borisov mirror construction. In particular, applying $f$-duality to the variety $V_{LT,\lambda}\subseteq \P^5/G_{81}$ yields $V_{\lambda}\subseteq\P^5$.

The generalisations looked at in the Remark \ref{Rem:Gen} were inspired by $f$-duality and it seems to be an interesting question when, in general, one can use the methods of variations of GIT employed in this paper to strengthen the notion of $f$-duality.
\end{remark}
\bibliography{LT}

\end{document}